\documentclass{amsproc}
\usepackage{epsfig,latexsym,amsfonts,amssymb,amsmath,amscd}

\newtheorem{theorem}{Theorem}[section]

\newtheorem{lem}[theorem]{Lemma}

\theoremstyle{definition}
\newtheorem{definition}[theorem]{Definition}
\newtheorem{example}[theorem]{Example}

\newtheorem{prop}[theorem]{Proposition}
\newtheorem{proposition}[theorem]{Proposition}

\theoremstyle{remark}
\newtheorem{remark}[theorem]{Remark}
\newtheorem{rem}[theorem]{Remark}
\newtheorem{ques}[theorem]{Question}
\numberwithin{equation}{section}
\newtheorem{cor}[theorem]{Corollary}

\newtheorem{MNote}[theorem]{MAJOR NOTE}

\def\a{\alpha}
\def\id{\operatorname{id}}
\def\op{\operatorname{op}}

\def\adj{\operatorname{adj}}

\def\mcA{\mathcal A}
\def\mcF{\mathcal F}
\def\val{\operatorname{val}}

\def\mcB{\mathcal B}
\def\mcP{\mathcal P}
\def\mcH{\mathcal H}

\def\mcS{\mathcal S}

\def\mcI{\mathcal I}
\def\mcAz{\mathcal A_0}
\def\mcM{\mathcal M}
\def\mcN{\mathcal N}

\def\mcS{\mathcal S}
\def\mcV{\mathcal V}
\def\C{\mathbb C}
\def\R{\mathbb R}
\def\nsets{\mathcal P^*}
\def\Cong{{\operatorname{\Phi}}}
\def\congg{{\operatorname{cong}}}

\newcommand{\etype}[1]{\renewcommand{\labelenumi}{(#1{enumi})}}
\def\eroman{\etype{\roman}}

\newcommand{\Net}{\mathbb N}
\newcommand{\Q}{\mathbb Q}
\newcommand{\Z}{\mathbb Z}

\def\supp{\mathrm{supp}}

\newcommand{\Det}[1]{ \left|{#1}\right|}

\newcommand{\trop}[1]{\mathcal{#1}}

\newcommand{\tG}{\trop{G}}

\newcommand{\one}{\mathbf 1}
\newcommand{\zero}{\mathbf 0}
\newcommand{\Hzero}{\mathbf 0_\mathbf H}

\newcommand{\tT}{\trop{T}}
\def\tTz{\trop{T}_0}

\def\bfa{{\mathbf a}}

\def\bfb{{\mathbf b}}

\def\tr{\mathrm{tr}}

\def\bfv{{\mathbf v}}

\def\ctw{\cdot_{\operatorname{tw}}}
\def\Rmax{\mathbb R}
\def\Smax{\mathbb S}

\def\ZZ{\mathbb Z}

\def\cocoa{{\hbox{\rm C\kern-.13em o\kern-.07em C\kern-.13em o\kern-.15em A}}}

\newcommand{\A}[1]{{\rm A#1}}

\def\la{\lambda}

\def\w2M{\bigwedge^2M}

\def\w{\wedge }

\def\SL{\operatorname{SL}}
\def\diag{\operatorname{diag}}

\def\Z{\ZZ}
\protect
\protect
\protect \protect

\protect
\protect

\def\be{\begin{equation}}
\def\ee{\end{equation}}
\def\bclm{\begin{claim}}
\def\eclm{\end{claim}}
\def\beqn{\begin{eqnarray}}
\def\eeqn{\end{eqnarray}}
\def\beqn*{\begin{eqnarray*}}
\def\eeqn*{\end{eqnarray*}}

\numberwithin{equation}{section}
\begin{document}

\title{Semirings}

\author{Louis Halle Rowen}

\address{Department of Mathematics, Bar-Ilan University, Ramat-Gan 52900,
Israel}
\curraddr{Department of Mathematics, Bar-Ilan University, Ramat-Gan 52900,
Israel}
\email{rowen@math.biu.ac.il}


\subjclass[2020]{Primary 16Y20,  16Y60;
Secondary 12K10, 14T10,   06A12,  20M25.}
\date{March 17, 2025}

\thanks{Thanks to Uzi Vishne for an in-depth reading of the paper and suggestions for improved exposition, and also Tomer Bauer and Be'eri Greenfeld for helpful comments}

\keywords{dependence, function semiring, hyperfield, hyperpair,  Krasner, linear algebra, magma, matrix, metatangible, pair, module, submodule, prime spectrum,  Property N, polynomial, quotient hyperfield, residue semiring, semiring, supertropical, root, tensor, vector space.}\begin{abstract}   We survey  theory developed over the past 10 years of semirings which need not be additively cancellative. The main features are a specified ``null ideal'' $\mcA_0$ of a semiring $\mcA,$   taking the place of a zero element, and a ``surpassing relation,'' taking the place of equality, which permit generalizations of the classical algebraic theory to polynomials and their roots, algebraic geometry, matrices, linear algebra,  varieties, categories, and module theory. The  ``pair'' $(\mcA,\mcA_0)$ is studied  along the lines of universal algebra.
\end{abstract}
\maketitle

\tableofcontents




\section{Overview}
 A \textbf{semigroup} is a set with an associative operation, often denoted as concatenation.   Occasionally, we use
\textbf{partial semigroups} which mean that the operation need not be defined on every pair of elements, but $(a_1 a_2)a_3 = a_1 (a_2a_3)$ whenever either side is defined. 

 An \textbf{additive semigroup} is a commutative semigroup, with the operation denoted by ``$+$," endowed with a neutral element~$\zero.$ 
 The \textbf{trivial} additive semigroup is $\{\zero\}$, and the two additive semigroups of order 2 are $\Z _2$ and the \textbf{Boolean semigroup} $(\mathbb B,+)$, where $1+1 = 1$.
An additive semigroup $\mcA$ is  \textbf{idempotent} (also called a \textbf{band})  if $a+a =a$  for all $a\in \mcA.$ Thus, $(\mathbb B,+)$ is idempotent.

  By \textbf{monoid} we mean a semigroup under multiplication, containing $\one.$ Likewise for \textbf{partial monoid}.
There is a considerable literature on semigroups,  inspired in part by the example of the multiplicative monoid of a ring. If we want to permit two operations we are led to the following definition.

A \textbf{semiring}  $(\mcA,+,\cdot,\zero,\one)$ is a set $\mcA $ with multiplication~$\cdot$ distributing over addition $+$, and having distinguished elements $\zero \ne \one$ such that $(\mcA, +,\zero)$ is an additive semigroup and $(\mcA, \cdot,\one)$ is a monoid, with $\zero$ multiplicatively absorbing, in the sense that $\zero \cdot a = a \cdot\ \zero = \zero$ for all $a\in \mcA.$\footnote{If need be, one easily can  adjoin $\zero$ formally. There also is an easy construction for adjoining~$\one$, given in  \cite[p.~3]{golan92}.}

 Semiring theory starts with the natural numbers $\Net$, and  semirings were introduced formally by Vandiver~\cite{Va}.
Golan's book \cite{golan92} is still an excellent resource, and \cite[Chapter 1]{golan92} has a wealth of examples. Semirings are a useful structure in computer programming, cf.~\cite{LMRS} since    in programming one moves forward but cannot assume to be able to move backwards.

There is a thriving theory of semigroups and monoids, but much less so with semirings.
A  satisfactory categorical theory for monoids is provided in \cite{CHWW}, but it seems that the presence of the extra operation of addition without negation often complicates the situation and hampers basic categorical results.

Semirings can be studied as $\Omega$-algebras in universal algebra \cite[Chapter 2]{Jac}. In particular, one can define the direct product of semirings $\prod_{i\in I}\mcA_i.$
One calls two structures {\it isomorphic} (written $\cong$) when there is a 1:1 correspondence which respects their operations and distinguished elements (in the case of semirings, $ +,\cdot, \zero, \one$).

However, when one wants to dig deeper, lack of negation makes it difficult to utilize roots of polynomials and determinants of matrices, two of the most important tools of algebras.

Another important area of ring theory is through the study of modules. Although one can define projective and injective modules over semirings, categorically without difficulty, it is much harder to obtain basic  results in the homology theory of  modules over semirings.

Since the theory of semirings   has  lagged far behind ring theory, researchers have compensated by considering
 \textbf{semifields} \cite{VeC1,VeC2,HuW}, which are semirings $\mcA$ for which $(\mcA\setminus \{\zero\},\cdot)$ is a group; then  group theory can be  applied to the multiplicative structure instead of the additive structure. However this is rather restrictive when one aims for a general structure theory of semirings.

This article describes an attempt to find a general  algebraic framework developed  for  semirings and related structures, such as hyperfields, often used in conjunction with tropical algebra/geometry.  The initial idea can be found in  \cite{Dr,Gau}, which was formulated as supertropical algebra in \cite{zur05TropicalAlgebra,IR}  and was implemented
for blueprints in \cite{Lor1,Lor2}, and put in a more general context in 2016 in~\cite{Row22}, in terms of a ``negation map.''  More recently this approach was  broadened further in~\cite{JMR3,AGR2} to   a ``pair'' satisfying ``Property~N,'' which still is   sufficient often to provide a robust structure theory, and we shall study such pairs in this paper. Examples treated in \S\ref{ex11}   include supertropical pairs, paired tropical extensions, hyperpairs, function pairs, polynomial pairs, and matrix pairs. Categorically we shall see that actually there  are three different types of morphisms, each giving rise to its own structure theory.

 While studying semirings, we have encountered a fascinating connection between multiplication and addition, where the semiring has an underlying multiplicative monoid on which the additive structure acts, and this will drive forward most of the theory presented here.

Some of the paper is expository, and it will be noted where considerable work remains to be done. Other parts are largely new.

 \subsection{Additively cancellative semirings}$ $

The semiring $\Net$ is one of the most fundamental mathematical entities. Of course $\Net$ can be embedded into the ring $\Z$ via Grothendieck's celebrated construction. Nevertheless, many famous assertions in the literature are formulated for $\Net$ rather than $\Z,$ for example  Lagrange's theorem that every natural number is the sum of four squares, and Waring's problem in which    $7$ is written as a sum of seven cubes $(1^3 +\dots + 1^3)$ rather than two cubes $(2^3 +(-1)^3).$

The semiring  $\Net$ can be generalized in several directions, as the set of:
\begin{enumerate}
    \item  positive elements of an ordered ring;
    \item  sums of squares in a commutative ring;
     \item  sums of $m$-powers in a commutative ring, for any $m;$
       \item   $p$-powers in a commutative ring of characteristic $p$, for any prime  $p.$
\end{enumerate}

\begin{MNote}
    In each of these examples, the semiring is \textbf{additively cancellative} ($a+b_1 = a+b_2$ implies $b_1 = b_2$);  this property is inherited from the overlying ring.
One thereby is led to try to study an arbitrary additively cancellative semiring  by embedding it  into a ring,  first embedding the additive semigroup into a group and  then naturally extending multiplication when possible. In this paper,   our emphasis will be on
non-additively cancellative semirings.
\end{MNote}

For technical reasons to become clear later, we weaken the notion of semiring:
\begin{definition}\label{nds}
   An  \textbf{nd-semiring} is an additive semigroup which also is a monoid under  multiplication, but multiplication need not   distribute over addition.
\end{definition}
\subsection{Idempotent semirings}\label{na}$ $

\begin{example}\label{gro} Vandiver's example \cite[p.~516]{Va}, denoted $\mathbb V_n$, in which  $\Net$ is truncated at some number $n,$ identifying $m$ with $n$ for all $m\ge n.$ Additive cancellation clearly fails, since $n+1 = n+0 = n.$

    A more sophisticated example is the set of finitely generated modules over a ring, where the sum is the direct sum, and the product is the tensor product.
\end{example}

But we are mainly interested in the other extreme. We say that a semiring $\mcA$ is  \textbf{idempotent} if the semigroup $(\mcA,+)$ is idempotent.

For example, the power set $(\mathbf{P}(S),\cup,\cap, \emptyset, S)$  of a set $S$, is an idempotent semiring. (Note that one could reverse $\cup$  and $\cap$ and still have an idempotent semiring!)
Similarly, the set of ideals of a ring is an idempotent semiring, under the usual addition and multiplication of ideals.

\begin{rem} $ $\begin{enumerate}\eroman
    \item An  idempotent semiring cannot be additively cancellative, since $a+\zero = a = a+a.$

    \item  Every idempotent semiring is  \textbf{zero sum free} (ZSF) \cite[p.~4]{golan92}, in the sense that $a_1 + a_2 = \zero$ implies $a_1 = a_2 = \zero$, since $a_1 = a_1 + (a_1 + a_2) = a_1 + a_2 =\zero.$\footnote{ZSF semirings are called ``antirings'' in \cite{DO} and ``lacking zero sums'' in \cite{IKR}.}
\end{enumerate}
\end{rem}

\medskip  \subsubsection{The max-plus algebra}$ $

In the \textbf{max-plus  algebra} over the rational numbers,   the sum of two numbers is taken to be their  maximum, and the product is taken to be their original sum.
The ``max-plus'' algebra  is \textbf{bipotent} in the sense that $a+b = b$ whenever $a\le b$.

More generally,  any ordered abelian  monoid $(\tG,+)$  gives rise to     a bipotent semiring $\tG \cup \{\zero\}$, with multiplication taken to be  the original addition of $\tG,$ and addition taken to be the maximum ($\zero$ formally taken to be the minimal element, which absorbs in multiplication).

The most basic example is the \textbf{Boolean semifield} $\mathbb B: =\mathbb V_2 =\{0,1 \}$ (taking the trivial monoid $\tG = \{1\}$), contained canonically in every idempotent semiring.

\medskip  \subsubsection{Connection to tropical geometry}$ $

As  explained in \cite{Gat}, the max-plus algebra is the foundation of tropical geometry, obtained via a limiting process of other semirings.
Namely, for any real number~$m,$ define a semiring structure on $\R$ by $a \bigodot_m b = a+b$ and  $a \bigoplus_m b = {\log_m ( m^a+  m^ b)}.$\footnote{A slightly different approach is to take $a \bigoplus_m b $ to be $\root m \of {a^m + b^m}$.} Then $\lim_{m\to \infty} a \bigoplus_m b = \max\{a,b\}. $

The connection to tropical geometry is as follows:
Given an algebraic curve over $\C,$ one obtains an ``amoeba'' by taking the logarithm of the absolute values of the coordinates to the base $m,$ and  the limiting case for $m\to \infty$ looks like a stick figure,  called a ``tropical curve.''  Tropicalization greatly simplifies the geometry, although preserving key enumerative properties, and tropical mathematics has become a very active subject, as seen in \cite{IMiSh,MaSt}.
An excellent introduction to algebraic aspects of semirings in tropical theory is found in~\cite{AGT}.

With the advent of the accompanying $\mathbb
F_1$-algebra/geometry, cf.~\cite{CC}, researchers have been led to tackle the theory of idempotent semirings (and semifields).

\begin{MNote}\label{draw}
    This revolutionary idea has several serious drawbacks:

\begin{itemize}
  \item In order to use logarithms, one has to pass through $\R$, which is not algebraically closed.
  \item The limit procedure of having the base of logarithms go to infinity, although conceptually clear, is difficult for computations.
    \item The lack of negatives hampers  the algebraic development of the theory, for example in linear algebra.
\end{itemize}
\end{MNote}

\subsection{Valuations and the supertropical semiring}\label{val}$ $

The first two deficiencies of Note~\ref{draw} are overcome by utilizing the field $\mathbb{K}$ of
Newton-Puiseux series $f=\sum_{\alpha \in \Q} f_\alpha t^\alpha$, where  the coefficients of $f_\alpha$ are in $\C$ (or any algebraically closed field) and the exponents of $t$ in of $f_\alpha$ are all in $\frac{1}{m}\Z$ for suitable~$m$ depending on $f.$ The field
$\mathbb{K}$ is known to be algebraically closed, so in mathematical logic is elementarily equivalent to $\C.$ At the same time we have the \textbf{Puiseux series valuation} $v: \mathbb{K}\to \Q$ taking
$f$ to $\alpha$, where $\alpha$ is  the lowest exponent of a nonzero summand $ f_\alpha t^\alpha$ of $f.$ We recall that a (nonarchimedian) \textbf{valuation} from an integral domain $K$ to an ordered abelian group $(\tG,+ \, )$
 is a multiplicative monoid homomorphism  $v: K\setminus \{ 0 \}\to \tG
$, i.e., with  $v(ab) = v(a) + v(b), $
and   satisfying the   property $v(a+b) \ge \min \{v(a),v(b) \}$ for
all $a,b \in K.$
 We formally put $v(0) = \infty.$
 Information about a valuation $v :K \to \tG \cup \{ \infty \}$ can be
garnered from the \textbf{value group} $v(K)$,
  which can replace   $\tG$.

A well-known fact about   valuations: If $v(a_1), \dots, v(a_m)$ are distinct,  then $v(\sum _{i=1}^m
a_i) = \min \{ v(a_i): 1 \le i \le m \}.$ Thus, $-v$ yields   $\max$  when $v(a_1), \dots, v(a_m)$ are distinct.
For example, if $f = 3 t ^2 + 9t ^4$ and  $g = 5t ^3 +
7t ^4$ then $f+g =  3 t ^2 + 5t ^3 + 16t ^4$ and $-v(f+g) =
-2 = \max \{ -2,-3\} = \max \{ -v(f), -v(g)\}.$ But if  $f = 3 t ^2 + 9t ^4$ and  $g = -3t^2 + 7t ^3 + 7t ^4$ then $f+g =   7t
^3 + 16t ^4$ and $-v(f+g) = -3 <-2.$  In general, we do not know a priori $v(f+g)$ when $v(f)=v(g).$ This led Zur Izhakian \cite{zur05TropicalAlgebra} to the following notion:

 \medskip  \subsubsection{The supertropical semiring}\label{st}$ $

Suppose $\tG$ is an ordered abelian  monoid with absorbing minimal element $\zero_{\tG}$, and $\tT$ is an abelian monoid, together with a monoid homomorphism $\mu: \tT\to \tG$.
Take the action $\tT\times \tG\to \tG$ defined by $a\cdot g = \mu(a)g.$
Then $\mcA := \tT\cup  \tG$  becomes an abelian monoid  when we extend the given multiplications on $\tT$ and on $\tG,$ also using the above action of $\tT$ on $\tG$.

Setting $\mu(g)=g$ for all $g\in \tG,$
we  define addition on $\mcA$ by $$b_1+b_2 = \begin{cases}
 b_1 \text{ if } \mu(b_1)>\mu(b_2),\\
b_2 \text{ if } \mu(b_1)<\mu(b_2),\\
\mu(b_1) \text{ if } \mu(b_1)=\mu(b_2).
\end{cases}.$$

We call $\mcA$ the \textbf{supertropical semiring}, denoted $\mathbb T(\tT;\tG)$.  $\mcA$ is a  semiring, but is not idempotent since $\one +\one = \mu(\one) \ne \one.$ We shall deal with this phenomenon shortly.
Also, $\mcA$  not a semifield since $\tG$ is a proper ideal.  Note that $\mcA$ contains the \textbf{supertropical Boolean semiring} $\mcB = \{\zero,\one,\mu(\one)\},$ where $\one +\one = \mu(\one).$

\medskip  \subsection{Tropical extensions}$ $

Occasionally it is convenient to refine the supertropical semiring even further.
 \begin{definition}\label{AGGGexmod1}[``Tropical extensions''  \cite[Proposition
2.12]{AGG2}, a special case of ``$\mathcal L$-layered semirings'' in  {\cite[\S 7.2]{AGR2}} and \cite{IKR0}]
  We are given a semigroup  $(\mathcal L,+,\zero)$ and  an ordered set  $\tG$.
The \textbf{tropical extension} $ \mathcal L \rtimes \tG$ of $\mathcal L$
consists of the set $\mathcal L\times \tG$, made into a semigroup with the following addition:
   \begin{equation}
\label{basicex17}(\ell_1,b_1) + (\ell_2,b_2) = \begin{cases}
(\ell_1,b_1) \text{ if } b_1 > b_2,
 \\ (\ell_2,b_2) \text{ if } b_1 < b_2,  \\  (\ell_1 + \ell_2,\, b_1)
 \text{ if }   b_1 =  b_2  .
\end{cases}\end{equation}

When $\mathcal L$ is a semiring (resp.~nd-semiring, cf.~Definition~\ref{nds}) and $\tG$ is an abelian ordered semigroup, we make $ \mathcal L \rtimes \tG$ into a semiring (resp.~nd-semiring) under the componentwise operation $(\ell_1,b_1) (\ell_2,b_2) = (\ell_1\ell_2,b_1 + b_2)$.
\end{definition}

\begin{rem}\label{Vlab}  $\mathcal L$ often is taken to be the Vandiver monoid $\mathbb V_n$ of Example~\ref{gro}.
  For $n=2,$ and $\tG$   the max-plus algebra, there is an isomorphism from $ \mathcal L \rtimes \tG$ onto the supertropical semiring $\mathbb T(\tG;\tG)$.
\end{rem}


  \subsection{Graded semirings}

\begin{definition}\label{grd}
    A semiring $\mcA$ is \textbf{graded} by a  semigroup $M$ if we can write $\mcA = \prod_{x \in M} A_x$ where $A_xA_y \subseteq A_{x+y}$. Some   cases of especial note:
    \begin{itemize}
        \item (The ``supercase'') $\mcA$ is graded by the group $(\Z_2,+)$.

        \item (The ``supertropical'' case)  $\mcA$ is graded by the Boolean monoid $(\mathbb B,+)$.

         \item (The ``layered'' case)  $\mcA$ is graded by $\mathbb V_n$.
    \end{itemize}
\end{definition}

Each of these cases arises prominently, but   an algebraic theory has been developed thoroughly only in the supertropical case.

\subsection{Hyperrings and Hyperfields}$ $

Viro~\cite{Vir} (anticipated by \cite{Mar,Mit}) found another way of viewing addition for the supertropical semifield. Instead of $a+a = \mu(a)$, he interpreted  $a+a$ as the multivalued sum $ \{ b: b\le a\}.$ So let us formalize multivalued sums.
Define $\nsets(\mcH) := \nsets(\mcH) \setminus \emptyset.$
We follow \cite{krasner,Vir}, as extended by \cite{AGR1}.

\begin{definition}\label{hy7}  Let  $\mathcal H$ be a set.
\begin{enumerate}
    \item We are given a commutative multivalued addition $\boxplus : \mathcal{H}\times \mathcal{H}\to \nsets (\mathcal H),$ which is \textbf{associative }   in the sense that if we
 define
\[ a \boxplus S = S\boxplus a =\bigcup _{s \in S} \ a \boxplus s,
\]
 then $(a_1
\boxplus a_2) \boxplus a_3 = a_1 \boxplus (a_2\boxplus a_3)$ for all
$a_i$ in $\mathcal H .$


    \item      $\mcH$ is viewed as the set of singletons in $\nsets (\mathcal H ),$ identifying $a\in \mathcal{H}$ with~$\{a\}$.

     \item  $(\mathcal H  ,\boxplus,\mathcal \Hzero)$  is a \textbf{hypersemigroup}
 when  $\mathcal H$ has an  element $\Hzero$, satisfying $\Hzero \boxplus a = a \boxplus \Hzero = a$ for all $a\in \nsets (\mathcal H )$. 

   \item A \textbf{hypergroup} is a hypersemigroup for  which every element $ a \in \mathcal H  $ has a unique \textbf{hypernegative} $-a \in \mathcal H  $, in the sense that  $\mathcal \Hzero \in a \boxplus
   (-a),$ which also distributes over hyperaddition, in the sense that $-(a_1\boxplus a_2) = (-a_1)\boxplus(-a_2).$ Here $(-)S $ denotes $\{-a: a\in S\}.$
 \end{enumerate}
  \end{definition}

 \begin{definition}$ $
     \begin{enumerate}
         \item
A \textbf{hypersemiring} (resp.~\textbf{hyperring}) is a hypersemigroup (resp. hypergroup) $\mathcal H $, which also is  a multiplicative monoid, $(\mcH,\cdot)$, with $\Hzero$ absorbing and  $\cdot$ distributing over hyperaddition.

 \item
A \textbf{hyperfield} is a hyperring $\mathcal H $ for which $(\mcH \setminus \{ \Hzero\},\cdot)$ is a group.
     \end{enumerate}
 \end{definition}

\begin{lem}\label{hp22} $ $
\begin{enumerate}\eroman
    \item  Any hypersemigroup $(\mathcal{H},\boxplus ,\Hzero)$ induces
     a semigroup       $(\nsets(\mcH),\boxplus ,\{\Hzero\}),$ with addition  given by $$S_1 \boxplus   S_2 = \cup \{s_1 \boxplus  s_2: s_i\in S_i\}.$$
     \item When $\mathcal{H}$ is a hypersemiring, then  $(\nsets(\mcH),\cdot,\{\one\})$ is a monoid, where we define $S_1 \cdot S_2 = \{a_1a_2: a_i \in S_i\}$, which satisfies ``single distributivity'':

    \begin{equation}
         a \cdot \boxplus  S_i = \boxplus  (a\cdot S_i)  ; \quad (\boxplus  S_i)\cdot a =  \boxplus (S_i  \cdot a), \qquad \forall a\in \mcH, \ S_i \in \nsets(\mcH).
     \end{equation}
\end{enumerate}
 \end{lem}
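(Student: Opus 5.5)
The plan is to verify each axiom directly from the definitions, treating an element $a\in\mcH$ as the singleton $\{a\}$ as in Definition~\ref{hy7}(2). Throughout, the key observation is that the extended sum $S_1\boxplus S_2=\bigcup\{s_1\boxplus s_2: s_i\in S_i\}$ is a union of values of the original hyperaddition. Every operation on nonempty subsets therefore reduces elementwise to the operations on $\mcH$.

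For (i), the first step is well-definedness: if $S_1,S_2$ are nonempty, then $S_1\boxplus S_2$ is a nonempty union of nonempty sets, so it lies in $\nsets(\mcH)$. Commutativity follows at once from the commutativity of $\boxplus$ on $\mcH$. For the neutral element, $\{\Hzero\}\boxplus S=\bigcup_{s\in S}\Hzero\boxplus s=\bigcup_{s\in S}\{s\}=S$.

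The main step in (i) is associativity. I would expand both sides as unions:
\[(S_1\boxplus S_2)\boxplus S_3=\bigcup_{s_1,s_2,s_3}(s_1\boxplus s_2)\boxplus s_3 .\]
This uses the fact that $\bigl(\bigcup_j T_j\bigr)\boxplus a=\bigcup_j (T_j\boxplus a)$, which is immediate from the definition of $a\boxplus S$ in Definition~\ref{hy7}(1) as a union. The right side expands in the same way to $\bigcup s_1\boxplus(s_2\boxplus s_3)$. Elementwise associativity in $\mcH$ then gives equality termwise. The only real care needed anywhere is this bookkeeping: the set-level sum must commute with unions, and the definition of $a\boxplus S$ makes this tautological.

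For (ii), $S_1\cdot S_2$ is nonempty. It is associative because $(S_1S_2)S_3=\{(a_1a_2)a_3\}=\{a_1(a_2a_3)\}=S_1(S_2S_3)$, and $\{\one\}$ is the identity. For single distributivity, take $a\in\mcH$. Unwinding the iterated sum $\boxplus S_i$ (finitely many, iterated by (i)) as the union of all $s_1\boxplus\cdots\boxplus s_n$ with $s_i\in S_i$, we get
\[a\cdot\boxplus S_i=\{a x: x\in s_1\boxplus\dots\boxplus s_n\}=\bigcup a\cdot(s_1\boxplus\dots\boxplus s_n).\]
Distributivity of $\cdot$ over hyperaddition in the hypersemiring, applied inductively on $n$, gives $\bigcup (as_1)\boxplus\dots\boxplus(as_n)=\boxplus(a\cdot S_i)$. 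The right-hand version is symmetric.

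It is worth noting, though not required, why only single distributivity is asserted: $(S\boxplus T)\cdot U$ need not equal $SU\boxplus TU$ when $U$ is not a singleton.
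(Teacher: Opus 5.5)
Your proposal is correct and follows the paper's approach. The paper's proof is the one line ``associativity and single distributivity are checked elementwise,'' and you carry out that check: you commute the set-level $\boxplus$ and multiplication by a singleton with unions, then apply associativity and distributivity of $\mcH$ termwise.
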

 \begin{proof} Associativity and single distributivity are checked elementwise.
 \end{proof}

Warning: Even when $a_1,a_2\in a_1\boxplus a_2,$  the hyperset $ S \cdot (a_1 \boxplus a_2)$ could be properly contained in  $S\cdot a_1 \, \boxplus \, S\cdot a_2,$ as we shall see below in Example \ref{ex-krasner}(iii).

   \medskip  \subsubsection{Examples of hyperfields}$ $

Hyperfields have become an active area of investigation \cite{AGR1,AGT,BaL,BaZ,BoS,CC,Ho,HoJ,krasner,Mar,Mas,MasM,Mit,NakR,Vir}.
   We recall some major examples of hyperfields from \cite{AGR1,krasner,Vir}.
Also, a~large collection of hyperrings is given in \cite[\S 2]{MasM}, and \cite{Ho}, and    related examples are in~\cite{NakR}.

\begin{example}\label{ex-krasner}$ $
\begin{enumerate}\eroman

\item   Our motivation came from the \textbf{tropical hyperfield} \cite{Mit,Vir} which
  consists of the set $\mcH=\R \cup \{-\infty\}$, with $-\infty$
  as the zero element $\Hzero$ and $0$~as the unit element $\one$, equipped
  with  addition $a\boxplus b = \{a\}$ if $a>b$,
  $a\boxplus b = \{b\}$ if $a<b$,
  and $a\boxplus a= [-\infty,a]$. It is easy to see that there is an embedding from the supertropical semifield to the tropical hyperfield, given by $a \mapsto a$ and $\mu(a) \mapsto [-\infty,a].$

  A special case is  the \textbf{Krasner hyperfield} \cite{krasner}  $ \mathcal K := \{ 0, 1 \}$, with the usual multiplication
law, and with hyperaddition defined by  $ x \boxplus  0 = 0 \boxplus  x =
x $ for all $x ,$ and $1 \boxplus  1 =  \{ 0, 1\}$.   $\nsets(\mathcal K) = \mathcal K\cup \{\mathcal K\}$ is isomorphic to~$\mathcal{B}$, by the map $0 \mapsto \zero, \ 1\mapsto \one, \ \{0,1\}\mapsto \mu(\one).$
\item
  The \textbf{hyperfield of signs}
$ \mathcal S := \{ 1 , 0, -1\}$,
  with the usual multiplication
law, and hyperaddition defined by $1 \boxplus  1 = 1 ,$\ $-1
\boxplus  -1 = -1  ,$\ $ x \boxplus  0 = 0 \boxplus  x = x $ for all
$x ,$ and $1 \boxplus  -1 = -1 \boxplus  1 = \{ 0, 1,-1\} $.
$\mathcal{S}\cup  \{\mathcal S\}$ is a semiring isomorphic to $\widehat{\mathbb B},$ by sending $0 \mapsto \zero,$ $1 \mapsto (1,0)$,  $ -1 \mapsto (0,1)$, and $\{ 0, 1,-1\} \mapsto (1,1).$

\item  The \textbf{phase hyperfield}. Let $S^1$ denote the complex
  unit circle, and take $\mcH = S^1\cup \{ 0 \}$.
  Nonzero points $a$ and $b$ are \textbf{antipodes} if $a = -b.$
Multiplication is defined as usual (corresponding on $S^1$ to
addition of angles). We denote $(a\,b)$ for an open arc of less than 180 degrees
connecting two non-antipodal points $a,b$ of the unit circle. The hypersum is given, for $a,b\neq 0$, by
$$a \boxplus b=
\begin{cases} (a\,b) \text{ if } a \ne b \text{ and } a\neq -b;\\
  \{ -a,0,a \} \text{
if } a = -b , \\   \{  a \} \text{ if }   a = b.
\end{cases}$$
  \end{enumerate}
  The phase hyperfield fails double distributivity, since, for $a\ne -b,$  $-(a\,b)\boxplus (a\,b) = \mcH $ whereas $\{-1,0,1\} (a\,b)$ is $(a\,b)\cup \{0\}\cup -(a\,b)$.
  \end{example}

\medskip  \subsubsection{Krasner's residue construction}\label{Kr1}$ $

There also is a general construction   of Krasner \cite{krasner}, done more generally in~\cite{Row24}.

 \begin{theorem}\label{theoremE}{\cite[Theorem D]{AGR2}}$ $
 \begin{enumerate}\eroman
     \item Suppose $(\mathcal S,+)$  is a semigroup and $f:\mathcal S \to \bar {\mathcal S}$ is any  set-theoretic map onto a set $\bar {\mathcal S}$. Define   hyperaddition $\boxplus: \overline {\mathcal S} \to \mathcal{P}(\overline {\mathcal S})$
  by $\overline {a} \boxplus \overline{a'} = \{ \overline{a+a'}: f(a) = \overline{a}, f(a') = \overline{a'}\},$ for $a,a'\in {\mathcal S}.$ Then $\overline {\mathcal S}$ is a hypersemigroup (i.e., is associative). If  ${\mathcal S}$ is a  group then $\overline {\mathcal S}$ is a hypergroup, where $-\overline{a}=  \overline{-a}.$
  \item Suppose a monoid $G$ acts on a semiring $(R,+)$, such that the orbits are multiplicative, i.e., $(g_1 a)( g_2 a' )= g_1g_2(aa').$ Write $  \overline { a} = Ga,$ for $a\in R.$ Then $\overline {R}:= R/G$ is a hypersemiring under the hyperaddition of (i), with
   $\overline a \overline{a '}:= \overline{aa'}.$
 \end{enumerate}
\end{theorem}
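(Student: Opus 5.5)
The key observation for (i) is that, written out, $\overline a\boxplus\overline{a'}$ is just the image under $f$ of the Minkowski sum of the fibers:
\[
\overline a\boxplus\overline{a'} \;=\; f\bigl(f^{-1}(\overline a)+f^{-1}(\overline{a'})\bigr).
\]
Since $f$ is onto, the fibers are nonempty, so these hypersums are nonempty. Associativity is the only axiom requiring work, and I would prove it by the same descent from $(\mathcal S,+)$.

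\textbf{Associativity.} First I will compute $(\overline{a_1}\boxplus\overline{a_2})\boxplus\overline{a_3}$ using the extension of $\boxplus$ to subsets in Definition~\ref{hy7}. It equals the union, over $b=f(s_1+s_2)$ with $s_i\in f^{-1}(\overline{a_i})$, of $f\bigl(f^{-1}(b)+f^{-1}(\overline{a_3})\bigr)$. The point to check is that this union is exactly
\[
f\bigl(f^{-1}(\overline{a_1})+f^{-1}(\overline{a_2})+f^{-1}(\overline{a_3})\bigr).
\]
The inclusion $\supseteq$ is immediate: given $s_1+s_2+s_3$, take $b=f(s_1+s_2)$, and then $s_1+s_2\in f^{-1}(b)$.

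The reverse inclusion $\subseteq$ is the step I expect to be the real obstacle. An element of the left side has the form $f(t+s_3)$ with $t$ an arbitrary element of $f^{-1}(b)$, not necessarily of the form $s_1+s_2$. I will therefore have to examine the statement carefully. If associativity genuinely fails in this generality, the natural repair is to interpret the construction with $f$ compatible with addition on fibers, for instance when the fibers are orbits as in part (ii). There $f^{-1}(b)=Gb$, and if $G$ acts by additive maps, $g(s_1+s_2)=gs_1+gs_2$ again lies in $f^{-1}(\overline{a_1})+f^{-1}(\overline{a_2})$, so the inclusion closes. I would state clearly which hypothesis is used here. Granting this, both bracketings equal the symmetric triple expression, and commutativity is inherited from $\mathcal S$.

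\textbf{Zero and the group case.} I will take $\Hzero=\overline{\zero}$. In the orbit setting $f^{-1}(\Hzero)=G\zero=\{\zero\}$, so $\Hzero\boxplus\overline a=\overline a$. When $\mathcal S$ is a group, I set $-\overline a=\overline{-a}$. This is well defined, and negation distributes over $\boxplus$, provided negation commutes with the fibers (automatic for additive actions). Then $\overline{\zero}=\overline{a+(-a)}\in\overline a\boxplus\overline{-a}$. Uniqueness of the hypernegative follows because $\overline{\zero}$ has singleton fiber, so $\zero=s+s'$ forces $s'=-s$.

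\textbf{Part (ii).} The hypothesis $(g_1a)(g_2a')=g_1g_2(aa')$ shows that $\overline a\,\overline{a'}=\overline{aa'}$ is well defined, and that $R/G$ is a monoid with $\overline{\one}$ and with absorbing $\overline{\zero}$. For distributivity, I will compute
\[
\overline c\,(\overline a\boxplus\overline{a'})=\{\overline{c(ga+g'a')}\},
\]
and use $\overline{cga}=\overline{ca}$ to identify it with $\overline{ca}\boxplus\overline{ca'}$ elementwise. Here one inclusion is direct from distributivity in $R$. The other needs rewriting $g''ca+g'''ca'$ as $c'(\cdots)$ with $c'\in Gc$, which uses the same multiplicative-orbit hypothesis.
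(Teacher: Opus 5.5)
Your suspicion about the inclusion $\subseteq$ is justified, and you can drop the hedge: for an arbitrary surjection $f$, part (i) is false. Write $F_i=f^{-1}(\overline{a_i})$. The left bracketing is $f\bigl(f^{-1}(f(F_1+F_2))+F_3\bigr)$, and the saturation $f^{-1}(f(F_1+F_2))$ can be strictly larger than $F_1+F_2$. For instance, let $\mathcal S=(\mathbb Z,+)$ and let $f$ identify $1$ with $10$, with all other fibers singletons. Then
\[
(\overline{2}\boxplus\overline{-1})\boxplus\overline{3}=\overline{1}\boxplus\overline{3}=\{\overline{4},\overline{13}\},\qquad \overline{2}\boxplus(\overline{-1}\boxplus\overline{3})=\overline{2}\boxplus\overline{2}=\{\overline{4}\}.
\]
Moreover $-\overline a:=\overline{-a}$ is not even well defined, since $\overline{1}=\overline{10}$ but $\overline{-1}\neq\overline{-10}$. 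The paper's proof of (i) simply asserts $(\overline a\boxplus\overline{a'})\boxplus\overline{a''}=\{\overline{a+a'+a''}\}$, which is exactly the step you flagged. So the defect lies in the paper's statement and argument, not in your plan. The hypothesis you isolate is the right one: every sum of fibers $F_1+F_2$ must be a union of fibers. You also need $f^{-1}(\overline{\zero})=\{\zero\}$ for the neutral element, and negation carrying fibers to fibers in the group case. Under these hypotheses your argument gives associativity.

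In part (ii), you should not add additivity of the action as a hypothesis, because it follows from the multiplicative-orbit condition. Taking $a=\one$, $g_1=g$, $g_2=1$ gives $(g\one)a'=ga'$, and symmetrically $a(g\one)=ga$. So each $g\in G$ acts as multiplication by $g\one$. Distributivity in $R$ then yields $g(b_1+b_2)=gb_1+gb_2$ and $g\zero=\zero$. In particular $G\zero=\{\zero\}$, which you used without justification. Similarly, $c(ga)=g(ca)$ is the case $g_1=1$ of the hypothesis, and that is all your distributivity step needs (you may take $c'=c$). With these observations your argument proves (ii) exactly as stated. It also supplies the justification missing from the paper's ``by induction'' claim $\boxplus b_iG=\{\sum a_iG: a_i\in b_iG\}$. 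One caveat: you and the paper both identify the fiber $f^{-1}(\overline b)$ with $Gb$. That presupposes the orbits partition $R$, which is automatic when $G$ is a group (as for the multiplicative subgroup in the main example) but not for a general monoid.
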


 \begin{proof} (i) As in  \cite{krasner}, given in \cite[Proposition~2.13]{JMR3}. Namely $$(\overline {a} \boxplus \overline{a'})\boxplus \overline{a''} = \{ \overline{a+a'+a''}: f(a) = \overline{a}, f(a') = \overline{a'}, f(a'')=\overline{a''}\} =\overline {a} \boxplus (\overline{a'} \boxplus \overline{a''}).$$

     (ii) First note by induction that $   \boxplus b_i  G = \{ \sum a_i G : \quad a_i   \in b_iG \}.$ Then  $$ aG(\boxplus_{i=1}^{m} b_i  G )= \left\{ aG  \sum a_i'G:\ a_i' \in b_i G\right\}    =\left\{ (\sum a  a_i')G: \,a_i' \in b_i G\right\} =  \boxplus_{i=1}^{m} aG b_i  G, $$
  implying $\overline {R}$ is a hypersemiring.
   \end{proof}

 \begin{example} The major example of (ii) is when  $G$ is a multiplicative subgroup of $R$. If $R$ is a field, then $R/G$ is a hyperfield,  known in the literature as the \textbf{quotient hyperfield}.
 \end{example}

 \medskip  \subsubsection{Description of major hyperfields as quotient hyperfields}$ $

We want to identify the  different examples of Example~\ref{ex-krasner} as quotient hyperfields. Towards this end, an \textbf{isomorphism} $f:\mcH \to \mcH'$ of hyperfields is defined as a   group isomorphism satisfying $f(a_1 \boxplus a_2) =f(a_1)\boxplus f(a_2)$ for all $a_i\in \mcH.$

\begin{example}\label{hypers} Here are isomorphisms from Examples~\ref{ex-krasner} to quotient hyperfields.
\begin{enumerate}\eroman

  \item  The {tropical hyperfield}
  is isomorphic to the quotient hyperfield $F/G$,
  where $F$ denotes any field with a surjective
  valuation $v: F \to \R\cup\{+\infty\}$, and
  $G:=\{f\in F: v(f) =0\}$.

  The {Krasner hyperfield}  is isomorphic to the quotient hyperfield $F/F^\times$, for any field $F$.

\item  The {hyperfield of signs}  is isomorphic to   the quotient hyperfield $K/K_{>0}$ for each linearly
ordered field $(K,\leq )$,
where $K_{>0}$ is the group of positive elements of $K$.

\item  The  {phase hyperfield}  is isomorphic to the quotient hyperfield
$\C/\R_{>0}$.

\end{enumerate}
\end{example}

\begin{remark}\label{hp2} Suppose that  the hypernegative $-\one$ exists in  the quotient hypermonoid $\mcH =\mcM/G$. Define $e = \one \boxplus (-\one) \in \nsets(\mcH)$ (so $\Hzero\in e$).

If $G = \{ \pm 1\}$, then $\Hzero\in a \boxplus a$ for all $a\in \mcH.$
\end{remark}

The failure of distributivity  in  some quotient hyperfields   enhances the following result.

\begin{lem}\cite[Lemma~3.5]{Row24}
  If $-1\in G$, then $ee = e\boxplus e$  in  $\mcM/G$.\end{lem}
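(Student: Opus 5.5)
We work in the quotient hypermonoid $\mcH=\mcM/G$, where $\mcM$ is a ring, say a field as in the quotient hyperfield setting, $G$ is a multiplicative subgroup containing $-1$, and $\overline a = aG$. The plan is to write both sides of $ee = e\boxplus e$ explicitly as sets of cosets, using the description of iterated hypersums from the proof of Theorem~\ref{theoremE}(ii). Since $-1\in G$, we have $-\one = \overline{-1} = \overline 1 = \one$. Hence
$$e = \one\boxplus\one = \{\overline{g_1+g_2}: g_1,g_2\in G\}.$$

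For the right side, the same induction gives
$$e\boxplus e = \{\overline{g_1+g_2+g_3+g_4}: g_i\in G\}.$$
For the left side, the product in $\nsets(\mcH)$ is elementwise, so
$$ee = \{\overline{(g_1+g_2)(g_3+g_4)}: g_i\in G\} = \{\overline{g_1g_3+g_1g_4+g_2g_3+g_2g_4}\}.$$

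\textbf{Inclusion $ee\subseteq e\boxplus e$.} This is immediate: each of the four products $g_ig_j$ lies in $G$, so the expanded expression is a sum of four elements of $G$.

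\textbf{Inclusion $e\boxplus e\subseteq ee$.} This is the main step. Given $h_1,h_2,h_3,h_4\in G$, we must find $g_1,g_2,g_3,g_4\in G$ and $g\in G$ with
$$g(g_1+g_2)(g_3+g_4) = h_1+h_2+h_3+h_4.$$
Factoring out $h_1$ reduces this to the case $h_1 = 1$, so we need $(g_1+g_2)(g_3+g_4) = 1+a+b+c$ with $a,b,c\in G$. A general product $(g_1+g_2)(g_3+g_4)$ has only three free ratios, but its four terms satisfy the constraint $g_1g_3\cdot g_2g_4 = g_1g_4\cdot g_2g_3$, which a general triple $a,b,c$ will not satisfy.

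The way around this is to use the hypothesis $-1\in G$, which means $\overline{x} = \overline{-x}$, together with the freedom to choose $g$. My first attempt is to take $g_1 = 1$, $g_3 = 1$, $g_2 = a$, $g_4 = b$, which yields $1+a+b+ab$. Matching this against $1+a+b+c$ requires extra flexibility. I expect the intended argument to exploit the fact that $e$ is stable under multiplication by $G$, that is, $ge = e$, together with the distributive inclusion in Lemma~\ref{hp22} applied with $S = e$. Concretely, I would show $e\boxplus e = \bigcup_{x\in e}(x\boxplus e)$ and then express each $x\boxplus e$ as $x\cdot e$ or as a union of sets of the form $y\cdot e$ with $y\in e$. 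The key tool would be $\one\in e$ together with the single distributivity $y\cdot(\one\boxplus\one) = y\boxplus y$ from Lemma~\ref{hp22}, which gives $y\boxplus y = y\cdot e \subseteq ee$ for $y\in e$.

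The obstacle is the case of $x\boxplus y$ with $x\ne y$. There I would try the substitution $y = x\cdot(y/x)$ to rewrite $x\boxplus y = x(\one\boxplus y/x)$. If this matching cannot be completed in general, I would consult \cite[Lemma~3.5]{Row24} for the precise hypotheses; the statement may implicitly assume the setting of Remark~\ref{hp2}, namely $G = \{\pm1\}$, or some closure condition on $e$, under which the reverse inclusion becomes straightforward.
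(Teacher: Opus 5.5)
You correctly compute $e=\{\overline{g_1+g_2}\}$, $e\boxplus e=\{\overline{g_1+g_2+g_3+g_4}\}$ and $ee=\{\overline{(g_1+g_2)(g_3+g_4)}\}$, using $-\one=\overline{-1}=\one$. Your inclusion $ee\subseteq e\boxplus e$ and your observation $x\boxplus x=x(\one\boxplus\one)=xe\subseteq ee$ for $x\in e$ are also correct. The gap is the reverse inclusion $e\boxplus e\subseteq ee$, which you never establish.

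The obstruction you found, $g_1g_3\cdot g_2g_4=g_1g_4\cdot g_2g_3$, is a real one. With the elementwise product on $\nsets(\mcH)$ from Lemma~\ref{hp22}, which is what you and the paper use, the reverse inclusion is false in general. Since $1+(-1)=0$ with $\pm1\in G$, we have $\Hzero\in e$. Hence $e=\Hzero\boxplus e\subseteq e\boxplus e$, and the claimed equality would force $e\subseteq ee$.

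Take $\mcM=\Q$ and $G=\{\pm1\}$, which is exactly the case of Remark~\ref{hp2} you hoped would be easy. Then $e=\{\overline{0},\overline{2}\}$ and $ee=\{\overline{0},\overline{4}\}$. But $e\boxplus e\supseteq\overline{0}\boxplus\overline{2}=\{\overline{2}\}$, so $e\boxplus e=\{\overline{0},\overline{2},\overline{4}\}\neq ee$. The stray element $\overline{2}$ comes from a mixed term $x\boxplus y$ with $x=\Hzero\neq y$, precisely the case $x\ne y$ you left open. No choice of $g,g'\in G$ gives $\overline{(1+g)(1+g')}=\overline{2}$, because $(1+g)(1+g')\in\{0,4\}$.

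The paper supplies no argument of its own, only the citation \cite[Lemma~3.5]{Row24}, so nothing there closes this gap. With this paper's definitions, only $ee\subseteq e\boxplus e$ holds in general, and equality needs an extra hypothesis. For example, suppose $\one\in e$ and $e\boxplus e=e$, as in the Krasner and tropical hyperfields of Example~\ref{ex-krasner}(i), where $e=\{0,1\}$ and $e=[-\infty,0]$ respectively. Then $e=\one\cdot e\subseteq ee\subseteq e\boxplus e=e$.

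Once you spotted the constraint, you should have tested the smallest case, $G=\{\pm1\}$, rather than look for a cleverer matching or guess that this case was intended. You would then have recorded that the statement as written needs such an additional hypothesis, or a different meaning for the product of hypersets.
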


It was thought until quite recently that non-quotient hyperfields are rare, but  large classes of  nonquotient  hyperfields have been found in \cite{Mas,MasM,Ho,HoJ}.

 \subsection{Semiring constructions}

Many standard constructions in algebra are applicable to semirings.

\subsubsection{Doubling}$ $

One way of circumventing the lack of negatives is by \textbf{doubling} a semiring~$\mcA$, inspired by Grothendieck's familiar construction of $\Z$ from $\Net$:

\begin{definition}\label{doub}
    Define the \textbf{doubled semiring} $\widehat{\mcA} = \mcA\times \mcA.$
  \textbf{Twist multiplication} on $\widehat{\mathcal A}$
 is defined as follows:

 \begin{equation}\label{twis} (b_1,b_2)\ctw (b'_1,b'_2) =
 (b_1b'_1 + b_2 b'_2, b_1 b'_2 + b_2 b'_1),\qquad (b_1,b_2), (b'_1,b'_2) \in \widehat{\mathcal A}.  \end{equation}
\end{definition}

\begin{rem}\label{doubb} The doubled semiring $\widehat{\mcA} $ is $(\Z_2,+)$-graded, the $0$ component being  $\mcA \times \{\zero\},$ and the $1$  component being  $\{\zero\}\times \mcA .$ Also
   $\widehat{\mcA} $ has an equivalence relation given by $(b_1,b_2)\simeq (b'_1,b'_2)$ if $b_1 b'_2 = b_2 b'_1 $, but we do not apply it, since it would yield a ring. Instead Gaubert~\cite{Gau},   who writes $\widehat{\Rmax}$ as~${\Smax},$ for ``symmetrized,'' viewed the first component $\mcA$ of  $\widehat{\mcA}$, denoted $\mcA^+,$ as the {\it positive part}, the second component, denoted $\mcA^-,$ as the {\it negative part}, and the diagonal
    $\{(a,a): a\in A\}$ as the {\it null part}. $\mcA^+$ is isomorphic to $\mcA$ via $(a,0)\mapsto a$ and when ${\mcA}$ is idempotent, so is $\widehat{\mcA}$.
\end{rem}

\begin{rem}\label{absa}
   The \textbf{absolute value} $||(b_1,b_2)|| := b_1+b_2$ is a homomorphism $\widehat{\mcA} \to \mcA$, which is a retract of the injection of $\mcA$ to the positive part of $\widehat{\mcA}.$
 \end{rem}

\subsubsection{Monoid semirings}

\begin{definition}
    The \textbf{monoid semiring} $\mcA[\mcM]$ of a monoid $\mcM$ over a semiring~$\mcA$ is the semigroup $\mcA^{(\mcM)}$,  under the addition of Example~\ref{dp}(i),  which  becomes a semiring   with \textbf{convolution product} $fg= ((\sum _{uv=s} f_u g_v)s),$ where $f = (f_u)$ and $g = (g_v)$.
\end{definition}

\begin{example}\label{poly} The  \textbf{polynomial semiring}. We write $\mcA[\Lambda]$ for the semiring $\mcA[\mcM]$, when $\mcM$ is the  word monoid in $t$ commuting letters $\la_1,\dots, \la_t$. The  \textbf{monomials} of $\mcA[\Lambda]$ are the polynomials with singleton support. Thus each polynomial is a sum of monomials.
The case $\Lambda = \{\la\}$ is of special interest,  denoted $\mcA[\la]$.
  The semiring $\Net[\lambda]$ plays a key role in the work of Lorscheid \cite{Lor1,Lor2} in $\mathbb{F}_1$-geometry.
\end{example}

\subsubsection{Function semigroups and semirings, cf.~\cite[Chapter 2]{golan92}}$  $

 \begin{definition} Suppose $X$ is a set with a distinguished element $\zero.$
     Write $X^S$ for the functions from a set $S$ to $X$. For $f\in X^S$ define the \textbf{support} $\supp(f) = \{ s \in S: f(s)\ne \zero\},$  and $X^{(S)}$ for the functions with finite support.
 \end{definition}

 \begin{example}\label{dp} Notation as above, \begin{enumerate}\eroman
     \item For an additive semigroup  $\mcA$,   $\mcA^S$ is an additive semigroup under elementwise addition of functions,  whose zero element is the function with empty support, i.e., sending   all   elements to~$\zero$. The elements of $\mcA^S$ can be viewed as $S$-tuples, and the sub-semigroup $\mcA^{(S)}$
  is the \textbf{semigroup direct sum}.

      (These are instances of the familiar direct product and direct sum of copies of $\mcA$ in universal algebra.)

   \item    If $\mcA$ is a semiring and $S$ is considered an index set then  $\mcA^S$ is a semiring with componentwise addition as in (i) and componentwise multiplication, where the unit element is the function sending  all   elements of $S$ to $\one.$
 \end{enumerate}
 \end{example}

 However, we are interested in a different semiring structure.

 \begin{lem}\label{fpr} $ $
   \begin{enumerate}\eroman
  \item When $\mcA$ is a semiring and $S$ is an additive semigroup, $\mcA^{(S)}$ becomes a semiring  under the addition of Example~\ref{dp}(i), but with \textbf{convolution product} $fg: s \mapsto \sum _{u+v=s} f(u)g(v).$  The unit element is the function sending $\zero \mapsto \one$ and all other elements to $\zero$.

  \item Generalizing (i), when $S$ is a partial monoid having the property that for any $s\in S$ there are only finitely many pairs $(u,v)\in S\times S : u+v =s,$ such as for $S$ well-ordered, then we can still define the convolution product.
   \end{enumerate}
\end{lem}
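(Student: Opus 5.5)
We verify the semiring axioms for the convolution product directly, using finiteness of supports so that every sum involved is finite.

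\textbf{Closure.} For $f,g\in\mcA^{(S)}$, the term $f(u)g(v)$ vanishes unless $u\in\supp(f)$ and $v\in\supp(g)$. So $fg(s)$ is a finite sum over pairs in $\supp(f)\times\supp(g)$ with $u+v=s$. Moreover $\supp(fg)\subseteq \supp(f)+\supp(g)$, which is finite, hence $fg\in\mcA^{(S)}$. In (ii) the finiteness of the sum is instead given directly by the hypothesis, which also covers the case where supports are not assumed finite, e.g.\ power-series-type semirings over well-ordered $S$. Addition is already known from Example~\ref{dp}(i) to give an additive semigroup with zero the function of empty support.

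\textbf{Associativity.} Compute $((fg)h)(s)$. Expanding with distributivity in $\mcA$, it equals $\sum_{u+v+w=s} f(u)g(v)h(w)$, where the index set is the set of triples with $u+v=x$ and $x+w=s$. The same triple sum is obtained for $(f(gh))(s)$. Associativity of $+$ in $S$ (resp.\ of the partial operation whenever defined in (ii)) makes the two indexing sets of triples agree. Associativity of multiplication in $\mcA$ matches the summands, and commutativity and associativity of addition in $\mcA$ allow the reordering. In (ii) one must check the triple index set is finite. This follows by applying the hypothesis twice: finitely many $(x,w)$ with $x+w=s$, then finitely many $(u,v)$ with $u+v=x$.

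\textbf{Distributivity, zero, and unit.} Distributivity $f(g+h)=fg+fh$ and $(g+h)f=gf+hf$ holds pointwise from distributivity in $\mcA$ and reordering finite sums. The zero function is absorbing because every summand involves $\zero_\mcA$. For the unit $\delta$ with $\delta(\zero)=\one$, we have $(\delta f)(s)=\sum_{u+v=s}\delta(u)f(v)$. Only $u=\zero$ contributes, and then $v=s$ is forced, giving $f(s)$. Here we need that $\zero+v=s$ forces $v=s$, which holds since $\zero$ is neutral. Finally $\delta\neq$ the zero function because $\one\ne\zero$.

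\textbf{Main obstacle.} The only delicate point is bookkeeping in the associativity step for (ii). There, partiality of the operation and the finiteness of fibers must be tracked so that both bracketings range over the same finite set of triples. When $S$ is well-ordered (an ordered monoid), finiteness of the fibers $\{(u,v):u+v=s\}$ is what the hypothesis supplies; we take it as given rather than derive it. Nothing beyond associativity and distributivity in $\mcA$ is used, so no cancellation is needed.
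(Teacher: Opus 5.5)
The paper states this lemma without proof, leaving it as a routine check, and your direct verification is exactly that check and is correct: finite supports (resp.\ finite fibres) make every sum finite, both bracketings of a triple product range over the same set of triples because the paper's partial associativity holds whenever either side is defined, and neutrality of $\zero$ in $S$ makes your $\delta$ a two-sided unit. You were also right not to derive the aside ``such as for $S$ well-ordered'': it needs the well-order to be strictly compatible with addition (then a Neumann-type argument gives finite fibres), and it fails, e.g., for $S=\Net\cup\{\infty\}$ with $n+\infty=\infty$, where $\infty$ has the infinitely many decompositions $n+\infty$.
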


\medskip  \subsubsection{Function  polynomial semirings and related constructions}
 \begin{example} \label{polyc}
 There is a host of instances of the function semiring construction over a   semiring $\mcA$,  under the convolution product.
 \begin{enumerate}\eroman
     \item We take a set of indeterminates  $\Lambda= \{\lambda_i : i\in I\}$. The \textbf{function polynomial semiring }, is $\mcA^{({\Net^{(I)}})}$where $f(\Lambda)$ denotes the function   $(m_i) \mapsto (f(m_i)),$ for~$(m_i)\in \Net^{(I)}.$
 \item  The \textbf{function power series semiring } is $\mcA^{\Net}$.

 \item  The \textbf{function Laurent series semiring} is the set of functions $f\in \mcA^\Z$ for which $\supp(f) $ is bounded from below.

  \item The field $\mathbb{K}$ of
Newton-Puiseux series from \S\ref{val} is another instance.
\end{enumerate}
 \end{example}

\begin{MNote}
    There is a canonical epimorphism $\psi: \mcA[\mcM] \to \mcA^{(\mcM)}$   sending $f = (f_s)$ to the function  $s \mapsto f_s$ in $\mcA.$ But $\psi$ need not be 1:1. For example, if $\mcA$ is the max-plus algebra then the polynomials $\la^3 +\la^2 +\la$ and $\la^3   +\la$ define the same function! Actually, this situation also arises classically for polynomials over a field~$F$ of $n$~elements; the polynomials $\la^n$ and $\la$ define the same function.
\end{MNote}
\medskip  \subsubsection{Matrix semirings}$ $

The \textbf{matrix semiring} $M_n(\mcA)$ can be defined as usual, with a \textbf{set  of matrix units} $S=\{e_{i,j}:\, 1\le i,j\le n\}$ satisfying $e_{i,j}e_{k,\ell} = \delta_{j,k}e_{i,\ell}$ where $\delta$ denotes the Kronecker delta, and $\sum_{i=1}^n e_{i,i} = \one$. But matrix units are not closed under multiplication, so we need  modify Lemma~\ref{fpr}(iii) in order to view matrices as a function semiring.
Namely, switching from addition on $S$ to multiplication to make the notation more familiar, suppose multiplication is defined only on a subset of $S$.

 $\mcA^{S}$ becomes a semiring  under the {convolution product} $fg: s \mapsto \sum _{uv=s} f(u)g(v).$   Take $\mcA^{S}$ where $S = \{ (i,j): 1\le i,j\le n\}$, under multiplication $(i,j)(k,\ell)=   (i,\ell)$ when $j=k$, and is undefined otherwise;  we get $M_n(\mcA)$ by     imposing the relation $\sum e_{i,i} = \one$.


\subsection{Modules and semirings over a  monoid}$ $

Just as one studies rings as algebras over an underlying commutative ring (often a field), we might be inclined to study
semirings over an underlying commutative semiring, but it turns out that the key is its multiplicative structure, which is a monoid.

\begin{definition}\label{um}
 $(\tT, \one)$ is a    monoid with a unit element $\one$.\footnote{To handle  matrix units, we could start with a partial monoid $\tT$, see next footnote.}
\begin{enumerate}
  \item
A $\tT$-\textbf{module}  
is an additive semigroup
$(\mathcal A,+,\zero_\mcA)$ together with a  (left)   $\tT$-action $\tT\times \mathcal A \to \mathcal A$ (denoted  as
concatenation), which is
\begin{enumerate}
\item \textbf{associative},  in the sense that
$a_1(a_2b) = (a_1a_2)b $ for all $ a_i \in \tT,$ $b  \in \mathcal A$,
whenever $a_1a_2$ is defined, and $a_1(a_2b) $  is undefined whenever  $a_1a_2$ is undefined.

 \item zero absorbing, i.e. $a \zero _\mcA = \zero_\mcA, $ $ a \in \tT$.

  \item \textbf{distributive},  in the sense that
$$a(b_1+b_2) = ab_1 +ab_2,\quad \text{for all}\; a \in \tT,\; b_i \in \mathcal A.$$

\item  $\one b= b$ for all $b\in \mcA.$
 \end{enumerate}

  \item   We call $\tT$ the \textbf{underlying monoid}  of  \textbf{tangible} elements. Our convention is to write $a$ for an element of $\tT,$ and $b$ for an element of $\mcA.$  We shall assume henceforth that $\mcA $ is a torsion-free $\tT$-{module}, in the sense that
if $a b _1= ab_2$, for  $a\in \tT$ and $b_1,b_2\in \mcA$, then  $b_1=b_2.$

\item  Define  formally the monoid $\tTz = \tT \cup \{\zero\}$ with absorbing element $\zero$ with $\zero \cdot \mcAz= \{\zero_\mcA\}$.\footnote{For $\tT$ a partial monoid, define $a_1a_2= \zero$ in $\tTz$ whenever $a_1a_2$ is undefined in $\tT.$}
\item  A $\tT$-module $\mcA $ is \textbf{free} if there is a \textbf{base} $\mcB$ such that every element of $\mcA$ can be written uniquely in the form $\sum a_b b : \  b\in \mcB,\ a_b \in \tTz$.



 \end{enumerate}
\end{definition}

\begin{example}\label{frees}$ $
\begin{enumerate}\eroman
  \item Any semigroup trivially is a $\{1\}$-module.
  \item Suppose $\mcA$ is a $\tT$-module.
  \begin{itemize}
       \item For any monoid $\mcM$, $\mcA[\mcM]$ is a $\tT$-module under the   action $a(f_u) =(af_u).$

       \item For any set $S$, $\mcA^S$ is a $\tT$-module under the pointwise action $(af)(s) = af(s).$

       \item For any $\tT$-module $\mcA,$ monoid $\mcM$, $\mcA[\mcM]$ is a $\tT$-module under the   action $a(f_u) =(af_u).$
  \end{itemize}

    \item  The \textbf{free $\tT$-module} is $\Net^{(\tT)},$ viewing $\Net$ as an additive semigroup.

        \item  The underlying monoid of the doubled $\tT$-module is $\tT \times \{\zero\} \, \cup \, \{\zero\}\times \tT.  $
\end{enumerate}
\end{example}

\begin{MNote} Many important properties are not preserved under the polynomial and matrix constructions. This is evident in hyperrings, for example.
       If $\mathcal{H}$ is a hyperring then
$\mathcal{H}[\lambda]$ is not a  hyperring in any natural way, since how would we define $a_1 \lambda \boxplus a_2 \lambda^2$ and $( \one \boxplus   \lambda)^2$?
Likewise, matrix constructions do not work well with hyperrings.

The difficulty is that we are not working directly with the hyperring, but rather its power set, which may fail to be distributive (cf.~\S\ref{nddis} below). Thus representation theory over hyperfields is problematic.
\end{MNote}

 For ease of exposition, we assume $\tTz \subseteq \mcA.$

\begin{definition}$ $
    A \textbf{$\tTz$-semiring}
 is a semiring which is also a
 $\tTz$-module, such that the natural map  $\iota :\tTz\to \mcA$ given  by $\iota(a) = a \one_\mcA$ is an injection  such that $\iota(\one) = \one_\mcA$.
 (Thus,
 the $\tT$-module action and multiplication by $\iota(\tT)$ within $\mcA$ are the same.)  

  We also assume throughout this exposition that $ab = b \iota(a )$  and $b_1(a b_2) =a(b_1 b_2) = (a b_1)b_2$ for all $a\in \tT,$ $b_i\in \mcA,$ i.e., the copy of $\tT$ in $\mcA$ is central.
\end{definition}

\subsection{Homomorphisms}$ $

\begin{definition}\label{hom1} Suppose $\mcA_i$ are  $\tT_i$-modules, for $i = 1,2.$ \begin{enumerate}\eroman
   \item   Our \textbf{maps} $f$ from   $\mathcal A_1$ to   $\mathcal A_2  $  always will
 satisfy
\begin{enumerate}
    \item $f (\tT_1) \subseteq \tT_2,$

\item $f(\zero_{\mcA_1}) = (\zero_{\mcA_2}),$  and $f(\one_{\mcA_1}) = \one_{\mcA_2},$

\item   $f(ab) = f(a) f(b)$ for all~$a\in \tT_1, \ b \in \mcA_1.$

\end{enumerate}

 \item     A  \textbf{$\tT_{1,2}$-homomorphism} $f:\mcA_1 \to
    \mcA_2$ is a  map satisfying $$f(b_1+b_2) =  f(b_1)+f(b_2), \quad\forall b_i\in \mcA_1.$$

\end{enumerate}
\end{definition}

\begin{lem}\label{cb} Suppose $f:\mcA_1 \to
    \mcA_2$   is a  $\tT_{1,2}$-homomorphism.
     \begin{enumerate}\eroman
  \item $\mcA_2$ is an $f(\tT_1)$-module, so we can replace $\tT_2$ by $f(\tT_1),$ and assume that  the restriction $f_{\tT_1}$ (defined
  $f_{\tT_1}(a) = f(\iota(a))$ for $a\in \tT_1$) is onto.

      \item Assuming $f_{\tT_1}$ is onto, view $\mcA_2$ as a $\tT_1$-module, by defining $a  b_2 = f(a )b_2$ for $a \in \tT_1,$ $b_2\in \mcA_2.$

     \item Let  $\id:\mcA_2 \to \mcA_2 $ denote the identity, from $\mcA_2$ as $\tT_1$-module to $\mcA_2$ as $\tT_2$-module.
     Then
      $f$ is a composition of    $\tT_{1,1}$-homomorphisms $f:\mcA_1 \to
    \mcA_2$  as $\tT_1$-modules, satisfying $f(ab) = af(b)$ for $a\in \tT_1,$  followed by $\id.$
     \end{enumerate}
\end{lem}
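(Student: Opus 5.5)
The plan is to verify each part directly from Definition~\ref{um} and Definition~\ref{hom1}, since the lemma is essentially bookkeeping about restricting and transporting actions.

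For (i), I would first check that $f(\tT_1)$ is a submonoid of $\tT_2$. By Definition~\ref{hom1}(i)(a) we have $f(\tT_1)\subseteq \tT_2$. By (i)(b), $f(\one)=\one$. By (i)(c), applied with $b\in\tT_1$, we get $f(a_1a_2)=f(a_1)f(a_2)$. So $f(\tT_1)$ is closed under multiplication and contains $\one$. Restricting the $\tT_2$-action on $\mcA_2$ to this submonoid keeps associativity, zero absorption, distributivity and $\one b=b$. Hence $\mcA_2$ is an $f(\tT_1)$-module, and $f_{\tT_1}$ becomes onto once $\tT_2$ is replaced by $f(\tT_1)$.

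One caveat concerns the standing torsion-free assumption of Definition~\ref{um}(2). It is inherited by the restricted action, because $f(a)\in\tT_2$.

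For (ii), I would define $a\cdot b_2:=f(a)b_2$ and verify the module axioms one at a time:
\begin{itemize}
\item associativity, $a_1(a_2b_2)=f(a_1)(f(a_2)b_2)=(f(a_1)f(a_2))b_2=f(a_1a_2)b_2$, uses the multiplicativity of $f$ on $\tT_1$ from (i);
\item zero absorption and distributivity are inherited directly from the $\tT_2$-action;
\item $\one b_2=f(\one)b_2=b_2$.
\end{itemize}
Torsion-freeness as a $\tT_1$-module may fail when $f_{\tT_1}$ is not injective. I would note that the statement only asserts a module structure, so this is not needed.

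For (iii), I would show that $f$, viewed as a map of $\tT_1$-modules, satisfies $f(ab)=f(a)f(b)$ by (i)(c), and $f(a)f(b)=a\cdot f(b)$ by the definition in (ii). It is additive by hypothesis, and it preserves $\zero$ and $\one$. Thus it is a $\tT_{1,1}$-homomorphism.

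Next I would check that $\id$, from $\mcA_2$ as a $\tT_1$-module to $\mcA_2$ as a $\tT_2$-module, is a map in the sense of Definition~\ref{hom1}, with respect to $f_{\tT_1}$ on the tangible parts. This is where I expect the main subtlety: condition (a) requires the tangible elements to go into $\tT_2$, and $\id(a\cdot b)=f(a)b$ must hold. I would handle this by regarding the tangible elements of the $\tT_1$-module $\mcA_2$ as the images $\iota(f(a))$, so that (a) holds, and then (c) is exactly the definition of the action.

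Finally, the composite $\id\circ f$ equals $f$ as a set map, which gives the factorization.
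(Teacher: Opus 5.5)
Your proposal is correct and follows the paper's route. Part (i) is the multiplicativity $f_{\tT_1}(aa')=f(a\,\iota(a'))=f_{\tT_1}(a)f_{\tT_1}(a')$ coming from condition (c) of Definition~\ref{hom1}, and (ii)--(iii) are the standard change of underlying monoid resting on the identity $a\cdot f(b)=f(a)f(b)=f(ab)$, which you check in more detail than the paper does.

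One small correction: your remark that torsion-freeness over $\tT_1$ may fail is mistaken. The relation $a\cdot b_1=a\cdot b_2$ means $f(a)b_1=f(a)b_2$ with $f(a)\in\tT_2$, so torsion-freeness is inherited exactly as in your part (i). What can fail when $f_{\tT_1}$ is not injective is the injectivity of $a\mapsto a\cdot\one_{\mcA_2}$, i.e., $\tT_1$ need no longer embed in $\mcA_2$.
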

\begin{proof}
    (i)  $f_\tT(aa') = f(\iota(aa')) =f(\iota(a) f(\iota(a') = f_\tT(a)f_\tT(a').$

    (ii), (iii) A standard change of the underlying monoid. Note by definition that $a f(b)=f(a)f(b) = f(ab).$
\end{proof}

\begin{rem}
    Since Lemma~\ref{cb} is canonical, we  assume from now on, for all   $\tT_{1,2}$-homomorphisms $f:\mcA_1 \to
    \mcA_2$, that $\tT_1 = \tT_2 = \tT,$ and    $f_{\tT}$ always is the identity map. Thus a  \textbf{$\tT$-homomorphism} satisfies $f(a  b)= a f(b)$ for $a\in \tT,$ $ b\in \mcA_1$.
\end{rem}


\begin{definition}
  For $\tTz$-nd-semirings $\mcA_1,\mcA_2,$ a map $f:\mcA_1\to\mcA_2$ is \textbf{multiplicative} if
$f(b_1b_2) =  f(b_1)f(b_2),$ $\forall b_i\in \mcA_1.$
   A  \textbf{$\tTz$-nd-semiring homomorphism} $f:\mcA_1 \to
    \mcA_2$ is a $\tT$-homomorphism  which also is a multiplicative map.
\end{definition}

\begin{MNote}$ $
 \begin{itemize}
     \item  As explained in \cite[Chapter 2]{Jac}, in universal algebra, homomorphisms are defined via congruences, where we recall that a congruence on $\mcA$ is an equivalence relation which viewed as a subset of $\mcA \times \mcA$ is a  subalgebra. In particular, the \textbf{trival congruence} $\diag$ is
    $\{(a,a): a\in \mcA\}$. This approach fits in well with semigroups and semirings viewed via universal algebra, but does not always yield the categories that we desire, as we shall see, and other morphisms are described in \S\ref{morp1}.

   \item   Given a congruence $\Cong$ on a semiring $\mcA,$ one defines $\mcA/\Cong$ as the semiring $\{ [b]: b\in \mcA\}$,   identifying $[b_1]=[b_2]$ when $(b_1,b_2)\in \Cong.$ (For example, $\mcA/\diag \cong \mcA.$)

     \item  If $f:\mcA_1\to\mcA_2$ is a homomorphism then for  $I\triangleleft \mcA_2,$ the inverse image $f^{-1}(I)$ is an ideal of $\mcA_1$, not a congruence. This creates a certain tension, where sometimes we choose  ideals and sometimes congruences, to be explained in Note~\ref{decomp1}.
 \end{itemize}
\end{MNote}

\section{Pairs}

One can continue to make  definitions parallel to rings,
but one encounters obstacles when attempting to factor polynomials or to describe dependence of vectors, due to the lack of negation. This leads us to the objective of this paper, to find
  some extra structure which will support a plausible structure theory. The underlying idea, originating in Gaubert's dissertation~\cite{Gau}, is to replace $\zero$, which is useless for ZSF semirings, by a designated additive subsemigroup.

The groundwork has been laid for our main notion.

 \begin{definition}\label{symsyst}
 \cite{AGR2,JMR3}. $ $ $\tT$ is the {underlying monoid}, cf. Definition~\ref{um}.
\begin{enumerate}
    \item A  $\tT$-\textbf{pair}  $(\mcA,\mcA_0)$ is  a  $\tT$-module $\mcA$, given together with a specified $\tT$-submodule  $\mcA_0$, i.e., $ab_0  \in \mcA_0$ for all $a\in \tT$ and $b_0\in \mcA_0,$ and which also satisfies the converse:

If $a b \in \mcA_0$ for $a\in \tT,$ then $b\in \mcA_0$ (which is obvious if $\tT$ is a group).

 \item From now on, excepting the Examples (iv), (v), and (vi) of \S\ref{ex11} and Definition~\ref{mapr}, the underlying monoid $\tT$ is given (until \S\ref{ext}), and   we simply call a $\tT$-pair     a ``pair.''
 We call $\mcA_0$ the ``null part'' of~$\mcA,$ which we informally view as 0.
\end{enumerate}
\end{definition}

\begin{MNote}\label{UA}$ $ \begin{enumerate}\eroman
   \item    The prototype which we call the \textbf{classical pair} and whose structure theory we  imitate, is the pair $(\mcA,\{\zero\}),$ where $\mcA$ is a faithful algebra over an integral domain $C.$
$\tT$ is the monoid $C\setminus \{0\}$ and $\tTz=C,$ and $\mcA_0 = \{0\}.$

   \item If $P$ is a prime ideal of an integral domain $\mcA,$ we have the pair $(\mcA,P)$ with
 underlying monoid  $C\setminus P$, which is a domain that provides a different perspective to the classical integral domain $\mcA/P$.

   \item  In the same spirit as (i) and (ii), we have the pair $(\mcA,\{\zero\}),$ where $\mcA$ is any semiring, and $\tT$ is any submonoid, such as $\{1\}.$

\item      The notion of ``pair'' is  general on purpose, in order to let $\mcA_0$ replace the neutral element in other diverse
algebraic situations. Following the spirit of universal algebra in~\cite{Jac}, we define an \textbf{abstract pair} $(\mcA, \mcA_0)$ to consist of an $\Omega$-algebra with an action of a  monoid   $\tT$ on $\mcA$ which preserves the universal algebra operations, and $\mcA_0$ is an $\Omega$-subalgebra closed under the $\tT$-action. For abstract pairs, we no longer are requiring that $\tT \subseteq \mcA.$
   \begin{itemize}
  \item  From this point of view, we do not need $\zero \in \mcA,$ and we could define $\mcA = \tT \cup \{\infty\}$, where $a\infty = \infty$ for all $a\in \tT.$ For example, take $\tT = \{\one\}$ and $(\mcA,\mcA_0) = ( \{ \one, \infty\},\{\infty\}). $

  \item To define \textbf{monoid pairs}, we need the conditions $a(b_1b_2)= (ab_1)b_2$ and $a\mcA_0 \subseteq \mcA_0$
for all $a\in \tT,$ $b_i\in \mcA.$

 \item Lie pairs were treated in \cite{GaR}.  \end{itemize}
 The \textbf{direct sum} $\oplus(\mcA_i,{\mcA_i}_0)$ of abstract pairs $(\mcA_i,{\mcA_i}_0)$ all having underlying monoid $\tT$ is defined as the pair $(\oplus \mcA_i,\oplus {\mcA_i}_0),$ with the diagonal action $a(b_i)= (ab_i).$

  An \textbf{isomorphism} of abstract pairs is an $\Omega$-algebra isomorphism which restricts to an $\Omega$-isomorphism of the null parts.
\end{enumerate}
\end{MNote}
Non-classical examples of pairs of main interest to us are in \S\ref{ex11} and \cite[\S 5]{AGR2}.

 \begin{definition}$ $ (We continue to  assume that $\tTz \subseteq \mcA$.)
     \begin{enumerate}
\item
A  pair $(\mcA,\mcA_0)$ is said to be
 \textbf{shallow} if $ \tT \cup \mcA_0 =\mcA  .$

\item
A pair $(\mcA,\mcA_0)$ is of the \textbf{first kind} if $\one + \one \in \mcA_0$  and of the \textbf{second kind} if $\one + \one \notin \mcA_0$ (And thus $a + a \notin \mcA_0$ for all $a\in \tT$).

     \end{enumerate}
 \end{definition}

 \begin{example}
    The classical pair $(\mcA,\{\zero\})$ is always proper, and is shallow when $\mcA = C;$ it is of the first kind when $\mcA$ has characteristic~2.
 \end{example}
\medskip

\subsubsection{Paired maps and homomorphisms}

\begin{definition}\label{congd1}$ $\eroman
Suppose $(\mcA,\mcA_0)$ and $(\mcA',\mcA'_0)$ are pairs.
\begin{enumerate}
\item
A \textbf{paired map} $\theta:(\mcA,\mcA_0)\to (\mcA',\mcA'_0)$  is a  map $\theta:\mcA \to \mcA'$ satisfying
$\theta:(\mcA_0)\subseteq \mcA_0'.$
\item
A \textbf{paired homomorphism}
is a paired  map which is a homomorphism (taken in its appropriate context, e.g., semiring pair or nd-semiring pair).

\item The  \textbf{kernel congruence}  of a paired map $\theta:(\mcA,\mcA_0)\to (\mcA',\mcA'_0)$ is $(\Cong_\theta,{\Cong_\theta}_0)$, where $\Cong_\theta := \{(b_1,b_2)\in \mcA \times \mcA: \theta(b_1) = \theta(b_2)\}$ and ${\Cong_\theta}_0 := \{(b_1,b_2)\in \Cong_\theta : \theta(b_1)\in \mcA'_0\}. $
\item  A \textbf{paired injection}  is a paired homomorphism   $\theta:(\mcA,\mcA_0)\to  (\mcA',\mcA_0')$ with $\theta^{-1}(\mcA_0')\subseteq \mcA_0$,
    together with $\theta(\tT) \subseteq \tT'$.
 \end{enumerate}
\end{definition}

\begin{lem}\label{C1} Suppose that $\Cong$  is a congruence of the pair $(\mcA,\mcA_0)$, and let $\Cong_0 = (\mcA_0 \times \mcA_0)\cap \Cong$.
    \begin{enumerate}\eroman
        \item  Then $(\mcA/\Cong,\mcA_0/{\Cong_0})$  is a pair, with the ``projection'' paired homomorphism $\pi: (\mcA,\mcA_0) \to (\mcA/\Cong,\mcA_0/{\Cong_0})$ given by $b\mapsto [b].$ 

\item If $\mcA_0 \subseteq \mcA_0' \subseteq \mcA,$ then the identity map induces a paired map $ (\mcA,\mcA_0)\to (\mcA, \mcA_0')$, which we call \textbf{an extension of the null part}.
    \end{enumerate}
\end{lem}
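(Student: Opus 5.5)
For (i), I will first show that $\mcA/\Cong$ inherits a $\tT$-module structure. Since $\Cong$ is a congruence of the pair, it is a $\tT$-submodule of $\mcA\times\mcA$: it is closed under componentwise addition and under the diagonal action $a(b_1,b_2)=(ab_1,ab_2)$ for $a\in\tT$. So I define $[b_1]+[b_2]:=[b_1+b_2]$ and $a[b]:=[ab]$. Well-definedness is the standard check. If $(b_1,b_1'),(b_2,b_2')\in\Cong$, then closure under addition gives $(b_1+b_2,b_1'+b_2')\in\Cong$. If $(b,b')\in\Cong$, then closure under the action gives $(ab,ab')\in\Cong$. Associativity, zero absorption, distributivity and $\one[b]=[b]$ then descend directly from the corresponding identities in $\mcA$.

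Next I will identify $\mcA_0/\Cong_0$ with the subset $\{[b_0]: b_0\in\mcA_0\}\subseteq\mcA/\Cong$. Since $\Cong_0=(\mcA_0\times\mcA_0)\cap\Cong$, the relation $\Cong_0$ is exactly $\Cong$ restricted to $\mcA_0$. It is therefore a congruence on $\mcA_0$, and its classes embed in $\mcA/\Cong$. This image is closed under addition and under the $\tT$-action, because $\mcA_0$ is, so it is a $\tT$-submodule.

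The step I expect to be the main obstacle is the converse axiom of Definition~\ref{symsyst}: if $a[b]\in\mcA_0/\Cong_0$, then $[b]\in\mcA_0/\Cong_0$. The hypothesis only gives $(ab,b_0)\in\Cong$ for some $b_0\in\mcA_0$, which need not lie in $a\mcA_0$, so we cannot simply cancel $a$. I will handle this in two ways.
\begin{itemize}
\item When $\tT$ is a group, $(b,a^{-1}b_0)\in\Cong$ and $a^{-1}b_0\in\mcA_0$, so the axiom holds.
\item In general, I would read ``congruence of the pair'' as also requiring this compatibility, or else replace $\mcA_0/\Cong_0$ by its saturation $\{[b]: a[b]\in \mcA_0/\Cong_0 \text{ for some } a\in\tT\}$. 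The saturation is a $\tT$-submodule when $\tT$ is commutative: given $a_1[b_1]$ and $a_2[b_2]$ in $\mcA_0/\Cong_0$, the element $a_1a_2([b_1]+[b_2])$ lies there, since it is a sum of the $\tT$-translates $a_2(a_1[b_1])$ and $a_1(a_2[b_2])$.
\end{itemize}
I would state explicitly which of these is intended.

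Finally, $\pi(b)=[b]$ is a homomorphism because the operations were defined through representatives. It satisfies the conditions of a map in Definition~\ref{hom1}: $\pi(\tT)$ lies in the image of $\tT$, which serves as the underlying monoid; $\pi(\zero)=[\zero]$; and $\pi(ab)=a\pi(b)$. It is paired because $\pi(\mcA_0)=\mcA_0/\Cong_0$ by construction.

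For (ii), I will take the identity map $\id:\mcA\to\mcA$. It is trivially a $\tT$-homomorphism. The hypothesis $\mcA_0\subseteq\mcA_0'$ gives $\id(\mcA_0)\subseteq\mcA_0'$, which is exactly the condition for a paired map in Definition~\ref{congd1}. The only additional thing to confirm is that $(\mcA,\mcA_0')$ is itself a pair, that is, that $\mcA_0'$ is a $\tT$-submodule satisfying the converse axiom. I would take this as implicit in the hypothesis.
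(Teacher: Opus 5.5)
Your route is the paper's: its proof of (i) is only ``standard for congruences; $\Cong_0$ is a congruence of $\mcA_0$, being the restriction of $\Cong$.'' That is exactly your embedding of $\mcA_0/\Cong_0$ into $\mcA/\Cong$ as a $\tT$-submodule, and the paper declares (ii) immediate. The paper never checks the converse axiom of Definition~\ref{symsyst}, and your suspicion is justified: as literally stated, (i) fails. Take the classical pair $(\mathbb Z,\{0\})$ with $\tT=\mathbb Z\setminus\{0\}$, and let $\Cong$ be congruence mod~$2$. Then $\Cong_0=\{(0,0)\}$, and $2\cdot[1]=[0]\in\mcA_0/\Cong_0$ although $[1]\notin\mcA_0/\Cong_0$. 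So restricting to the group case, or making an extra hypothesis explicit, is necessary rather than overcautious.

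What you missed is that the same example also violates the standing assumption of Definition~\ref{um}(2) that $\tT$-modules are torsion-free: $2[1]=2[0]$ but $[1]\neq[0]$. Your list of module axioms for $\mcA/\Cong$ should include this one.

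\begin{itemize}
\item In the group case your $a^{-1}$ trick covers it too, since $(ab_1,ab_2)\in\Cong$ gives $(b_1,b_2)\in\Cong$.
\item Your saturation fix does nothing here, because it only enlarges the null part. In general you must additionally require $(ab_1,ab_2)\in\Cong\Rightarrow(b_1,b_2)\in\Cong$ for $a\in\tT$.
\item This cancellation condition does not by itself give the converse axiom. As you already observed, in $(ab,b_0)\in\Cong$ the element $b_0\in\mcA_0$ need not lie in $a\mcA_0$, so both conditions must be imposed.
\end{itemize}

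Finally, in the example $\pi(2)=[0]$, so your claim that $\pi(\tT)$ ``serves as the underlying monoid'' breaks down. It is cleaner to keep $\tT$ itself acting on $\mcA/\Cong$ via $a[b]=[ab]$.
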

\begin{proof}
    (i) Standard for congruences; one just checks that $\Cong_0$ is a congruence of~$\mcA_0,$ being the restriction of $\Cong.$

    (ii) Immediate.
\end{proof}

 \begin{prop}\label{decomp}
     Any paired map  $\theta : (\mcA,\mcA_0)\to (\mcA',\mcA'_0)$ is the composite of the projection $(\mcA,\mcA_0)\to (\mcA/\Cong_\theta,\mcA_0/{\Cong_\theta}_0)$, followed by a paired injection and finally
      an extension of the null part (up to isomorphism).
 \end{prop}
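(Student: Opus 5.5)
The plan is the standard first isomorphism theorem factorization, carried out with attention to the null parts. Given $\theta$, set $\Cong=\Cong_\theta$. Since $\theta$ respects the operations (it is a paired homomorphism in the appropriate context, or at least a map compatible with the $\tT$-action), $\Cong_\theta$ is a congruence. By Lemma~\ref{C1}(i) we obtain the pair $(\mcA/\Cong_\theta,\mcA_0/{\Cong_\theta}_0)$ and the projection $\pi$. Here one should note that the image of $\mcA_0$ in $\mcA/\Cong_\theta$ is exactly the set of classes $[b_0]$, $b_0\in\mcA_0$. This is what $\mcA_0/{\Cong_\theta}_0$ denotes.

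Next define $\bar\theta:\mcA/\Cong_\theta\to\mcA'$ by $\bar\theta([b])=\theta(b)$. It is well defined and injective by the definition of $\Cong_\theta$, and it inherits the homomorphism properties and $\bar\theta(\tT)\subseteq\tT'$ from $\theta$. The key issue is that $\bar\theta$ need not be a paired injection into $(\mcA',\mcA'_0)$. Indeed, $\bar\theta^{-1}(\mcA'_0)$ may contain classes $[b]$ with $\theta(b)\in\mcA'_0$ but $b\notin\mcA_0$, and Definition~\ref{congd1}(iv) demands $\theta^{-1}(\mcA'_0)\subseteq\mcA_0$.

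So I would factor $\bar\theta$ through its image with a smaller null part. Let $\mcA''=\theta(\mcA)\subseteq\mcA'$ and $\mcA''_0=\theta(\mcA_0)$. One checks that $(\mcA'',\mcA''_0)$ is a pair (a $\tT$-submodule, and $\tT$-stable). Then $\bar\theta:(\mcA/\Cong_\theta,\mcA_0/{\Cong_\theta}_0)\to(\mcA'',\mcA''_0)$ is an isomorphism of pairs. Its inverse image of $\mcA''_0$ is exactly the classes of elements of $\mcA_0$, since $\bar\theta$ is bijective and maps $\mcA_0/{\Cong_\theta}_0$ onto $\theta(\mcA_0)$. Composing with the inclusion $(\mcA'',\mcA''_0)\hookrightarrow(\mcA',\theta(\mcA_0))$ gives a paired injection, again because $\bar\theta$ is injective.

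Finally, $\theta(\mcA_0)\subseteq\mcA'_0$, so by Lemma~\ref{C1}(ii) the identity $(\mcA',\theta(\mcA_0))\to(\mcA',\mcA'_0)$ is an extension of the null part. The composite of these three maps is $\theta$.

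The main obstacle is the converse condition in Definition~\ref{symsyst}(1) for the intermediate pair $(\mcA',\theta(\mcA_0))$: if $ab\in\theta(\mcA_0)$ with $a\in\tT$, we need $b\in\theta(\mcA_0)$. For $b$ in the image this follows from injectivity of $\bar\theta$, torsion-freeness and the pair property of $\mcA_0$. For general $b\in\mcA'$ I would first try replacing $\theta(\mcA_0)$ by its saturation $\{b\in\mcA': ab\in\theta(\mcA_0)$ for some $a\in\tT\}$. This still lies in $\mcA'_0$ by the pair property of $\mcA'_0$, and its preimage under $\bar\theta$ is still $\mcA_0/{\Cong_\theta}_0$, again by torsion-freeness and the pair property of $\mcA_0$. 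The phrase ``up to isomorphism'' absorbs the identification of $\mcA/\Cong_\theta$ with its image.
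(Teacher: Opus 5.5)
Your argument is the paper's proof. You project onto $(\mcA/\Cong_\theta,\mcA_0/{\Cong_\theta}_0)$ via Lemma~\ref{C1}(i), observe that the induced $\bar\theta$ has trivial kernel congruence and so, through its image, is a paired injection into $(\mcA',\theta(\mcA_0))$, and finally enlarge the null part from $\theta(\mcA_0)$ to $\mcA'_0$ by Lemma~\ref{C1}(ii).

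Your last paragraph addresses a point the paper passes over, namely the converse condition for the intermediate pair, and your justification there needs correcting. The implication $a[b]\in\mcA_0/{\Cong_\theta}_0\Rightarrow[b]\in\mcA_0/{\Cong_\theta}_0$ comes from the pair property of the quotient asserted in Lemma~\ref{C1}(i). Torsion-freeness and the pair property of $\mcA_0$ need not suffice for this when $\tT$ is not a group. Also, closure of your saturation under addition uses commutativity of $\tT$.
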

 \begin{proof}
     By Lemma~\ref{C1}, there is an induced paired map $\bar \theta: (\mcA/\Cong_\theta,\mcA_0/{\Cong_\theta}_0)\to (\mcA',\theta(\mcA_0)) $
     whose kernel congruence is trivial, since $\theta([b_1]) = \theta([b_2])$ if and only if $\theta(b_1)=\theta(b_2),$  if and only if $b_1=b_2$; afterwards we embed $\theta(\mcA)$ into $\mcA',$ followed by embedding $\theta(\mcA_0)$ into $\mcA'_0$.
 \end{proof}

 \begin{example}
     Let us see how this corresponds to classical homomorphisms of algebras. The homomorphism $\varphi:\Z_{16}[\la] \to \Z_4[\la]$ given by  $f(\la) \mapsto [2f(\zero)]$ is the composite of the natural projection $\Z_{16}[\la] \to \Z_{4}$ given by $f\mapsto [2f(0)],$ followed by the natural injection $\Z_{4} \to \Z_{4}[\la]$ In the context of pairs, we could start with $(\Z [\la], 16\Z [\la]) \to (\Z  , 16\Z )$ given by $f\mapsto f(0),$ and then to the paired injection
     $(\Z  , 16\Z )\to (\Z [\la] , 32\Z [\la]),$ given by $m\mapsto 2m,$  followed by the extension of the null part $32\Z  \to 8\Z .$

 \end{example}\begin{MNote}\label{decomp1}
     Proposition~\ref{decomp} indicates how ideals fit into the the structure theory of pairs, via extensions of the null part; much remains to be done.
 \end{MNote}

\subsubsection{Property~N}\label{propN1} $ $

A key issue is how $\tT$ interfaces with $\mcA_0.$

\begin{definition}{\cite[\S3.1]{AGR2}}\label{propN0}$ $
A  pair
$(\mcA,\mcA_0)$ satisfies
 \textbf{Property~N} (for ``negation'')  if there is an invertible element $\one^\dag\in \tT$  such that,  defining $b^\dag = \one^\dag b$ and $b^\circ = b+ b^\dag$:
 \begin{enumerate}\eroman
     \item $e:=\one+\one^\dag\in \mcA_0$.
     \item (Uniqueness of $e$) If $a + a' \in \mcA_0$ for $a,a'\in \tT,$ then $a' =(-) a.$
   \item  $b^\circ \in \mcA_0$ for each $b\in \mcA$.
 \end{enumerate}
\end{definition}

 (iii)  is automatic for semiring pairs, and for $b\in \tT$.  When the  pair
$(\mcA,\mcA_0)$ is of the first kind, we may (and shall) take $\one^\dag =\one.$

 \begin{lem}[{\cite[Lemma~3.10]{AGR2}}]\label{est}  $\one^\dag e=  e.$ Thus, in a semiring pair, $e^2 = e+e.$
\end{lem}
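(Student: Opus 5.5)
The plan is to compute $\one^\dag e$ directly from the definition $e=\one+\one^\dag$ and then invoke the uniqueness clause of Property~N. By distributivity of the $\tT$-action on $\mcA$ (Definition~\ref{um}(1)(c)), we get $\one^\dag e = \one^\dag\one + \one^\dag\one^\dag = \one^\dag + (\one^\dag)^2$. The task is therefore to show that $(\one^\dag)^2=\one$, since then $\one^\dag e = \one^\dag + \one = e$ by commutativity of addition.

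To get $(\one^\dag)^2=\one$, I use Property~N(ii). First, $e\in\mcA_0$ by N(i). Since $\mcA_0$ is a $\tT$-submodule, $\one^\dag e\in\mcA_0$, so $\one^\dag + (\one^\dag)^2\in\mcA_0$. Both summands lie in $\tT$: $\one^\dag\in\tT$ and $\tT$ is a monoid. Applying N(ii) with $a=\one^\dag$ and $a'=(\one^\dag)^2$ gives $(\one^\dag)^2 = (-)\one^\dag$, and $(-)\one^\dag$ means $(\one^\dag)^\dag=\one^\dag\one^\dag$, so this is a tautology. Used this way, (ii) gives no information.

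Instead I apply (ii) to the pair $(\one,\one^\dag)$ read in the other order. Since $\one^\dag+\one = e\in\mcA_0$, we get $\one = (-)\one^\dag = (\one^\dag)^2$. This uses the uniqueness statement in its intended sense: the only $a'\in\tT$ with $a+a'\in\mcA_0$ is $a^\dag$, and here $a=\one^\dag$, $a'=\one$. This identification of which element plays the role of $a$ is the only delicate point. The invertibility of $\one^\dag$ is not even needed once uniqueness is available, though it guarantees $\one^\dag\ne\zero$ if one worries about degenerate cases. Thus $(\one^\dag)^2=\one$ and $\one^\dag e=e$.

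For the second assertion, in a semiring pair multiplication distributes, so $e^2 = e(\one+\one^\dag) = e + e\one^\dag$. Since $\tTz$ is central in $\mcA$, $e\one^\dag = \one^\dag e = e$, and hence $e^2 = e+e$.
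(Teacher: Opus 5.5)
There is a genuine gap. Your proof rests on the step ``$\one^\dag+\one=e\in\mcA_0$, hence $\one=(-)\one^\dag=(\one^\dag)^2$''. That step reads Property~N(ii) as saying that a null sum $a+a'$ of tangibles forces $a'=\one^\dag a$.

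The literal wording ``$a'=(-)a$'' invites this reading, but it cannot be what (ii) means. Taking $a=\one$, it would make $\one^\dag$ unique. By your own computation it would also make $\dag$ a genuine negation map, so every pair with Property~N would be uniquely negated. The paper says the opposite in three places:
\begin{itemize}
\item Note~\ref{ex1}(ii) says $\one^\dag$ ``need not be uniquely determined'';
\item Note~\ref{ex1}(iii) treats $(\one^\dag)^2=\one$ as an \emph{extra} requirement of earlier incarnations;
\item Example~\ref{ex11a}(iii) is headed ``pairs satisfying Property N which are not uniquely negated.''
\end{itemize}
Condition (ii) is labelled \emph{uniqueness of $e$}: it pins down the null sum, not the summand $a'$.

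Under that reading, the identity you reduce to is false. Take the first pair of Example~\ref{ex11a}(iii) with $\tT$ infinite cyclic on a generator $g$. Every sum of two nonzero elements is $e$, so $g$ satisfies all three conditions of Property~N as a choice of $\one^\dag$. Then $\one^\dag e=g+g^2=e$, yet $(\one^\dag)^2=g^2\ne\one$. So the strategy of first proving $(\one^\dag)^2=\one$ cannot work in the generality of the lemma.

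The missing ingredient is the invertibility of $\one^\dag$, which you set aside as unnecessary. Since $\mcA_0$ is a $\tT$-submodule, $(\one^\dag)^{-1}e=(\one^\dag)^{-1}+\one\in\mcA_0$, and both summands are tangible. Uniqueness of $e$ then gives $\one+(\one^\dag)^{-1}=e$, that is, $(\one^\dag)^{-1}e=e$. Multiplying by $\one^\dag$ yields $e=\one^\dag e$.

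If uniqueness of $e$ is instead taken in the scaled form $a+a'\in\mcA_0\Rightarrow a+a'=ae$, apply it to $(a,a')=(\one^\dag,\one)$. This gives $e=\one^\dag+\one=\one^\dag e$ at once; again the statement is about the sum, not about $a'$.

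Your deduction of $e^2=e+e$ from $\one^\dag e=e$, via distributivity and centrality of $\tT$, is correct as written.
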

\begin{MNote}\label{ex1}$ $ \begin{enumerate}\eroman

      \item
      The  pair $(\mcA,\{\zero\})$ of Note~\ref{UA}(iii) has  Property~N if and only if  $(\mcA,+)$ is a group. In particular  $(\mcA,+)$  lacks Property~N when $\mcA$ is idempotent (and thus ZSF),    and that   difficulty   provided the motivation for this study.

       \item {\it From now on, the pair  $(\mcA,\mcA_0)$    is assumed to
satisfy   Property~N.} We arbitrarily choose $\one^\dag$, which need not  be uniquely determined.  One   theme of this exposition is that the uniqueness of $e$ often is all that is needed.
      \item
In earlier incarnations, $(\one^\dag)$ was  designated $(-)\one$, with the extra requirement $((-)\one)^2 = \one,$ yielding a  ``negation map'' $b \mapsto ((-)\one)b .$  In this case we write $(-)b$ for~$b^\dag,$ and we write $b_1(-)b_2 $ for $b_1 + ((-)b_2).$  The pair
$(\mcA,\mcA_0)$ is \textbf{uniquely negated} if $(-)\one$ is unique.

   \item  Obviously the classical pair is uniquely negated.

\end{enumerate}
\end{MNote}

\medskip  \subsubsection{Surpassing relations and reversibility}$ $

We  need some relation to replace equality. Lorscheid~\cite{Lor1} used a symmetric relation; the one we adapt from \cite{Row22}, on the contrary,    is antisymmetric on $\tT.$

 \begin{definition}$ $
\begin{enumerate}\eroman
 \item   A \textbf{pre-order} on a $\tT$-module $\mathcal A$, denoted
  $\preceq$, is a set-theoretic pre-order that respects the $\tT$-module structure, i.e.,  for all   $b,b_i \in \mcA$:
  \begin{enumerate}
      \item  For all $a\in \tT$, $b_1\preceq b_2$ if and only if $a b_1\preceq a b_2 .$
         \item   $b_i \preceq b_i'$ implies $b_1 + b_2 \preceq b_1'+b_2'.$
         \end{enumerate}
    \item  A \textbf{pre-surpassing relation} on a pair $(\mathcal A,\mcA_0)$, denoted
  $\preceq$, is a pre-order on $\mcA$ satisfying the condition
\begin{itemize}
\item  If
$b\in \mcA_0$ then $ \zero \preceq b. $
\end{itemize}

\item We write $b_1 \succeq b_2$ if $b_2\preceq b_1;$ then
  $\succeq$ is also a pre-surpassing relation.
   \item  A  \textbf{surpassing relation} on a pair $(\mathcal A,\mcA_0)$ is a  pre-surpassing relation satisfying the condition
\begin{itemize}
\item
$a_1 \preceq a_2 $ for $a_1 ,a_2 \in   \tTz$ implies $a_1 =a_2.$

\end{itemize}

\item For any pair $(\mathcal A,\mcA_0)$,   a pre-{surpassing relation} $\preceq$   is  \begin{enumerate}
    \item  \textbf{  $\tT$-reversible} if $a_0^\dag \preceq \sum_{i=1}^n a_i$ implies $a_1^\dag \preceq  a_0 +\sum_{i=2}^n a_i$,

 \item \textbf{strongly $\tT$-reversible} if  $a+b\in \mcA_0$ implies $ a ^\dag \preceq b$,
\end{enumerate}
for $a, a_i\in\tT$, $b\in \mcA$.
\end{enumerate}
 \end{definition}

 \begin{example}\label{sur1}$ $
\begin{enumerate}\eroman
 \item  Every pair $(\mathcal A,\mcA_0)$ has the ``minimal'' pre-surpassing relation $\preceq_0$, given by defining $b_1 \preceq_0 b_2$ when $b_1 + b' = b_2$ for some $b' \in \mcA_0.$  In particular, for the classical pair, $\preceq_0$ is equality and is strongly $\tT$-reversible.

\item For a pair $( \mathcal A,\mcA_0)$, define the pre-surpassing relation  $b_1 \preceq  b_2 $ if $b_1^\circ +b_2^\circ = b_2^\circ.$ Then the classical pair satisfies $b_1^\circ = \zero= b_2^\circ$ so $b_1\preceq b_2$ for  {\it all} $b_i\in \mcA$.

 \item If $\tT =\{1\},$ then any pre-order on a semiring $\mcA$ (as in Note~\ref{UA}(iii)) is a surpassing relation on $(\mcA,\{0\})$. 

    \end{enumerate}
 \end{example}

 \begin{lem}
     If $\preceq$ is a surpassing relation then $\mcA_0\cap \tTz =\{ \zero\}$.
 \end{lem}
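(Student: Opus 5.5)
The plan is to argue by definitions alone, showing that any element of $\mcA_0\cap \tTz$ must equal $\zero$. Note first that $\zero\in \mcA_0$, since $\mcA_0$ is a $\tT$-submodule and hence an additive subsemigroup containing the neutral element. Also $\zero\in\tTz$ by construction. So $\{\zero\}\subseteq \mcA_0\cap\tTz$, and it remains to prove the reverse inclusion.

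Take $a\in \mcA_0\cap \tTz$. By the defining condition of a pre-surpassing relation, membership $a\in\mcA_0$ gives $\zero\preceq a$. Both $\zero$ and $a$ lie in $\tTz$. The surpassing condition says that $a_1\preceq a_2$ with $a_1,a_2\in\tTz$ forces $a_1=a_2$. Applying it to $a_1=\zero$ and $a_2=a$ yields $a=\zero$, which completes the argument.

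No step is a real obstacle; the argument is a direct unwinding of the definitions. The only point to check is that the surpassing condition is indeed stated for $\tTz$, not just $\tT$. It is, since the definition says ``$a_1,a_2\in\tTz$'', and this is exactly what allows $\zero$ to be used as one side. If it were stated only for $\tT$, one would instead use Property~N to write $a\cdot \one$ and derive a contradiction via invertibility; that route is not needed here.
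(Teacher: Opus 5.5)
Your proof is correct and is the same as the paper's: from $a\in\mcA_0$ you get $\zero\preceq a$, and the surpassing condition on $\tTz$ then forces $a=\zero$. You also check the easy inclusion $\zero\in\mcA_0\cap\tTz$, which the paper leaves implicit.
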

 \begin{proof}
     If $a\in \mcA_0\cap \tTz$ then $\zero \preceq a$, so $a= 0.$
 \end{proof}

\begin{lem}
    \label{rev1} If $b +b_1 \preceq b_2$ for $b,b_i\in \mcA$, then $b_1\preceq b^\dag + b_2.$
\end{lem}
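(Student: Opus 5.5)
The plan is to add $b^\dag$ to both sides of the given relation $b+b_1\preceq b_2$ and then remove the resulting term $b^\circ$ on the left. Only the two defining properties of a pre-order on a $\tT$-module, the pre-surpassing condition $\zero\preceq b_0$ for $b_0\in\mcA_0$, and Property~N(iii) will be needed.

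First, since $b^\dag\preceq b^\dag$ by reflexivity, compatibility with addition (condition (b) of a pre-order) applied to $b+b_1\preceq b_2$ and $b^\dag\preceq b^\dag$ gives
$$b+b_1+b^\dag \preceq b_2+b^\dag.$$
By commutativity of $+$, the left side is $b_1+b^\circ$, where $b^\circ=b+b^\dag$.

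Next, Property~N(iii) gives $b^\circ\in\mcA_0$, so the pre-surpassing condition yields $\zero\preceq b^\circ$. Adding $b_1\preceq b_1$ to this, again by condition (b), gives $b_1=b_1+\zero\preceq b_1+b^\circ$. Transitivity of the pre-order, combined with the display above, then gives $b_1\preceq b^\dag+b_2$, which is the claim.

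There is no real obstacle. The only point to watch is the direction: we use $\zero\preceq b^\circ$ (nulls surpass zero) to pass from $b_1$ up to $b_1+b^\circ$, rather than trying to cancel $b^\circ$ downward. This is legitimate precisely because Property~N(iii) is assumed throughout, by Note~\ref{ex1}(ii), for arbitrary $b\in\mcA$ and not only for $b\in\tT$.
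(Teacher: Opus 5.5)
Your proof is correct and is essentially the paper's own argument: the paper gives the one-line chain $b_1 \preceq b_1+b^\circ = b^\dag+(b+b_1)\preceq b^\dag+b_2$. You have simply spelled out the justifications behind each step (reflexivity plus additive compatibility, $\zero\preceq b^\circ$ from Property~N(iii) and the pre-surpassing condition, and transitivity).
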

\begin{proof}
    $b_1 \preceq b_1+ b^\circ = b^\dag+(b+b_1) \preceq b^\dag + b_2 $.
\end{proof}
\begin{lem}\label{rever1}
  If $(\mathcal A,\mcA_0)$ has   a strongly $\tT$-reversible pre-surpassing relation $\preceq$,\begin{enumerate}\eroman
       \item   $(\mathcal A,\mcA_0)$ is uniquely negated.

     \item   $\preceq$  is a $\tT$-reversible surpassing relation.
   \end{enumerate}
\end{lem}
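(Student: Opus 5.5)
Part (i) should follow from Property~N together with strong reversibility. Part (ii) splits into two verifications: antisymmetry on $\tTz$ (so that $\preceq$ is a surpassing relation), and $\tT$-reversibility. The tools are Property~N(ii) (uniqueness of $e$), Lemma~\ref{rev1}, and the defining implication $a+b\in\mcA_0\Rightarrow a^\dag\preceq b$.

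For (i), let $\one'\in\tT$ be any element with $\one+\one'\in\mcA_0$. Strong reversibility gives $\one^\dag\preceq \one'$, and Property~N(ii) directly forces $\one'=\one^\dag$. So any two candidates for $(-)\one$ coincide. I will also check the extra requirement $(\one^\dag)^2=\one$ of Note~\ref{ex1}(iii):
\[
\one^\dag\one^\dag+\one^\dag=\one^\dag e .
\]
By Lemma~\ref{est} the right side equals $e$, which lies in $\mcA_0$, so the uniqueness in Property~N(ii) gives $\one^\dag\one^\dag=\one$. Hence $b\mapsto b^\dag$ is an involutive negation map and the pair is uniquely negated. In particular $a^{\dag\dag}=a$, which I will use repeatedly below.

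For antisymmetry in (ii), suppose $a_1\preceq a_2$ with $a_i\in\tTz$. I would add $a_2^\dag$ to both sides, which gives $a_1+a_2^\dag\preceq a_2^\circ\in\mcA_0$. The aim is to upgrade this to $a_1+a_2^\dag\in\mcA_0$; then Property~N(ii) yields $a_1=(a_2^\dag)^\dag=a_2$, and the case where one of the $a_i$ is $\zero$ is handled separately using the preceding lemma that $\mcA_0\cap\tTz=\{\zero\}$.

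For $\tT$-reversibility, start from $a_0^\dag\preceq\sum_{i\ge1}a_i$. Adding $a_0$ gives $a_0^\circ\preceq a_1+\bigl(a_0+\sum_{i\ge2}a_i\bigr)$. If the right side can be placed in $\mcA_0$, strong reversibility applied with $a=a_1$ and $b=a_0+\sum_{i\ge2}a_i$ gives exactly $a_1^\dag\preceq a_0+\sum_{i\ge2}a_i$. A fallback is to apply Lemma~\ref{rev1} with $b=a_0^\dag$, using $a_0^{\dag\dag}=a_0$ from (i); this gives $a_1^\dag\preceq a_0+a_1^\circ+\sum_{i\ge2}a_i$, and one must then remove the term $a_1^\circ$.

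The main obstacle, common to both halves of (ii), is passing from ``surpasses a null element'' (respectively ``carries an extra $a_1^\circ$ summand'') to genuine membership in $\mcA_0$ (respectively the sharper inequality). The bare axioms of a pre-surpassing relation do not obviously give this. I would first try to show that strong reversibility forces the set $\{b: b\succeq c \text{ for some } c\in\mcA_0\}$ to be contained in $\mcA_0$. If that fails, I would isolate precisely this closure condition, which holds for $\preceq_0$ of Example~\ref{sur1}(i), as the hypothesis actually being used.
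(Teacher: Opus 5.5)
Part (i) is fine, and your outline of (ii) is the paper's: antisymmetry by adding a negative to both sides and invoking Property~N(ii); reversibility by adding $a_0$ and then applying strong reversibility to $a_1+(a_0+\sum_{i\ge2}a_i)$. The obstacle you isolate is genuine. The paper's proof passes over it (``implying $a_2=a_1$ by (i)'', ``Clearly $a_0+\sum a_i\in\mcA_0$''), tacitly using that $c\preceq b$ with $c\in\mcA_0$ forces $b\in\mcA_0$. Your first plan, deriving this from strong reversibility, cannot work. On a classical pair, the relation of Example~\ref{sur1}(ii) relates all elements to one another. It is pre-surpassing and trivially strongly $\tT$-reversible, yet $\zero\preceq\one$, so (ii) fails without an extra hypothesis. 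Your fallback, assuming upward closure of $\mcA_0$ (valid for $\preceq_0$ and for $\subseteq$ on hyperpairs), is the right repair. With it, your reversibility argument is complete.

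Even granting that hypothesis, your antisymmetry argument has two gaps. First, it points the wrong way. You add $a_2^\dag$ and obtain $a_1+a_2^\dag\preceq a_2^\circ$, so the null element lies \emph{above} the element you want in $\mcA_0$. Upward closure does not apply here, and downward closure is false: in the supertropical pair, $\one\preceq_0\one+\one\in\mcA_0$ but $\one\notin\mcA_0$. Add $a_1^\dag$ instead, as the paper does. Then $a_1^\circ\preceq a_2+a_1^\dag$, so $a_2+a_1^\dag\in\mcA_0$, and Property~N(ii) together with (i) gives $a_2=a_1$.

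Second, the lemma $\mcA_0\cap\tTz=\{\zero\}$ presupposes that $\preceq$ is already a surpassing relation, so invoking it for the cases involving $\zero$ is circular. With closure, $\zero\preceq a$ gives $a\in\mcA_0$, and $a\preceq\zero$ gives $a^\circ\preceq a^\dag$, hence $a^\dag\in\mcA_0$. To conclude you must also assume $\tT\cap\mcA_0=\emptyset$. This assumption is not automatic: for $\mcA=\mcA_0=\mathbb B$ with $\tT=\{\one\}$, the total relation is strongly $\tT$-reversible and upward closed, yet $\zero\preceq\one$.
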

\begin{proof} (i) Suppose $a' + a \in \mcA_0$. Then $\zero ^\dag \preceq a' +a, $ implying $a' \preceq \zero  +a^\dag ,$ so $a^\dag=a'$. By symmetry, $(a')^\dag =a,$ so $(a^\dag)^\dag =a,$ and $\dag$ is a negation map $(-)$, and $a' = (-)a$, so $(\mathcal A,\mcA_0)$ is uniquely negated.

(ii) If $a_1 \preceq a_2$ then $a_2 (-) a_1 \succeq a_1(-)a_1,$ implying $a_2=a_1 $ by (i). Hence $\preceq$  is a  surpassing relation.

Next, suppose $a_0^\dag \preceq \sum_{i=1}^n a_i\in \mcA_0$ for $a_i\in \tT$.
     Clearly $a_0 +  \sum_{i=1}^n a_i\in \mcA_0 $, implying $a_1^\dag \preceq a_0 +  \sum_{i=2}^n a_i\in \mcA_0,$ by hypothesis.
\end{proof}

Gunn~\cite{gu} recalls a property called  \textbf{fissure},  that if  $a_0 \preceq \sum_{i=1}^n a_i $ for $a_i\in \tTz$, then there is $a\in \tTz$ such that $a_0 \preceq a_1+a   $ and $ a \preceq
  \sum _{i=2}^n  a_i.$
\begin{lem}
    \label{fis}  Suppose $(\mcA,\mcA_0)$ is uniquely negated and satisfies  fissure. Then the surpassing relation $\preceq$ is strongly $\tT$-reversible.
\end{lem}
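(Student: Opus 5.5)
The plan is to apply fissure with $a_0 = \zero$ to the relation $\zero \preceq a+b$, extract a tangible element $a'$ with $\zero \preceq a + a'$ and $a' \preceq b$, and then use Lemma~\ref{rev1} together with the surpassing property on $\tTz$ to identify $a'$ with $a^\dag$.

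Let $a\in\tT$ and $b\in\mcA$ with $a+b\in\mcA_0$; we must show $a^\dag\preceq b$. Since fissure is formulated for sums of tangible elements, first write $b=\sum_{i=2}^n a_i$ with $a_i\in\tTz$. This assumes $\mcA$ is additively spanned by $\tT$, as in the pairs under consideration; if that is not available, I would restrict the claim to such $b$. Because $a+b\in\mcA_0$, the defining condition of a pre-surpassing relation gives $\zero\preceq a+\sum_{i=2}^n a_i$. This is exactly the hypothesis of fissure with $a_0=\zero\in\tTz$ and $a_1=a$.

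Fissure then produces $a'\in\tTz$ with $\zero\preceq a+a'$ and $a'\preceq\sum_{i=2}^n a_i=b$. So everything reduces to showing $a'=a^\dag$. Apply Lemma~\ref{rev1} to $a+a'\succeq\zero$, taking its $b=a$, $b_1=a'$ and $b_2=\zero$. This yields $a'\preceq a^\dag+\zero=a^\dag$.

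Both $a'$ and $a^\dag$ lie in $\tTz$, because $\one^\dag\in\tT$. Since $\preceq$ is a surpassing relation, $a'=a^\dag$. The degenerate case $a'=\zero$ cannot occur: it would give $\zero=a^\dag$, which is impossible since $\one^\dag$ is invertible and $a\in\tT$. Hence $a^\dag=a'\preceq b$, which is strong $\tT$-reversibility.

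The role of unique negation is to make $^\dag$ a genuine negation map $(-)$, independent of the choice of $\one^\dag$. Then the conclusion $a^\dag\preceq b$ is well defined and agrees with $(-)a\preceq b$, as in the proof of Lemma~\ref{rever1}. The main obstacle I expect is the first step: passing from an arbitrary $b\in\mcA$ to a sum of tangibles so that fissure applies. I would handle it through the standing spanning assumption. Failing that, I would note that for $b\in\tTz$ the argument above works directly with $n=2$.
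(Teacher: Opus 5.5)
Your skeleton matches the paper's: apply fissure with $a_0=\zero$ to $\zero\preceq a+\sum_{i\ge 2}a_i$, get $a'\in\tTz$ with $\zero\preceq a+a'$ and $a'\preceq b$, then identify $a'$ with $a^\dag$. (The tacit reduction to $b$ a sum of tangibles also appears in the paper's proof, which starts from $\zero\preceq\sum_{i=1}^t a_i$ with $a_i\in\tT$, so that caveat is harmless.)

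\textbf{The gap is in the identification step.} Lemma~\ref{rev1} has hypothesis $b+b_1\preceq b_2$, with the sum on the \emph{smaller} side. With your choices $b=a$, $b_1=a'$, $b_2=\zero$ it needs $a+a'\preceq\zero$. Fissure gives you the opposite, $\zero\preceq a+a'$.

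The reversed version of Lemma~\ref{rev1} is not available. Its proof rests on $b_1\preceq b_1+b^\circ$, which comes from $\zero\preceq b^\circ$; the mirror argument would need $b^\circ\preceq\zero$. That fails already for $\preceq_0$ whenever $\mcA$ is ZSF and $b^\circ\ne\zero$. What the pre-order axioms do give from $\zero\preceq a+a'$ is $a^\dag\preceq a'+a^\circ$, and they provide no way to cancel the $a^\circ$. So $a'\preceq a^\dag$, and hence $a'=a^\dag$, is not established.

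A warning sign is that your argument never uses unique negation at the decisive point. Your last paragraph gives it only a cosmetic role (making $^\dag$ well defined).

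\textbf{The missing idea is to spend unique negation here, as the paper does.} The paper concludes $a'=(-)a$ directly from unique negation. Concretely:
\begin{enumerate}
\item First, $a'\ne\zero$: otherwise $\zero\preceq a$ with $a\in\tT$, contradicting the surpassing property.
\item Next, read $\zero\preceq a+a'$ as $a+a'\in\mcA_0$. This is what it means for $\preceq_0$ and for $\subseteq$ on hyperpairs, and it is what the paper tacitly uses.
\item Then Property~N(ii) (Definition~\ref{propN0}), applied to $a,a'\in\tT$, forces $a'=(-)a=a^\dag$, whence $a^\dag=a'\preceq b$.
\end{enumerate}
With that replacement for your use of Lemma~\ref{rev1}, your proof coincides with the paper's.
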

\begin{proof}  Suppose $\zero \preceq \sum_{i=1}^t a_i $ for $a_i\in \tT$. Fissure implies there exists $a$  such that $\zero \preceq a_1+ a $ and
    $a \preceq \sum_{i=2}^t a_i$. But then $a = (-)a_1$,   implying $(-)a_1 \preceq \sum_{i=2}^t a_i$.
\end{proof}

\begin{definition}
    Given a monoid pair $(\mcA,\mcA_0)$ (in the sense of Note~\ref{UA}(iv)) with a pre-surpassing relation~$\preceq$, the \textbf{$\preceq$-unit monoid} $\mcA^{\preceq \times}$ is $\{ b\in \mcA : \one \preceq b\}$, and
$\mcA^{\preceq \times}_0 = \mcA^{\preceq \times} \cap \mcA_0.$ 
\end{definition}
\begin{lem}
$\mcA^{\preceq \times}$ is a monoid, and  $(\mcA^{\preceq \times},\mcA_0^{\preceq \times})$ is a monoid pair.
\end{lem}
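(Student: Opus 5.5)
We must show that $\mcA^{\preceq \times}$ contains $\one$ and is closed under multiplication, and that $\mcA_0^{\preceq \times}$ is closed under the relevant operations. Containment of $\one$ is immediate from reflexivity of the pre-order: $\one \preceq \one$. Associativity is inherited from $\mcA$.

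For closure under multiplication, take $b_1,b_2$ with $\one \preceq b_1$ and $\one \preceq b_2$. We want $\one \preceq b_1 b_2$. The natural route is to show that the pre-order is compatible with multiplication on both sides, so that
\[ \one = \one\cdot\one \preceq b_1 \cdot \one = b_1 \preceq b_1 b_2, \]
and then conclude by transitivity. The axioms only give compatibility with the $\tT$-action. So in the monoid-pair setting I would state explicitly that the pre-surpassing relation is assumed to respect multiplication by elements of $\mcA$: $b\preceq b'$ implies $b''b\preceq b''b'$ and $bb''\preceq b'b''$. This is the analogue, for abstract monoid pairs (Note~\ref{UA}(iv)), of condition (a) in the definition of pre-order.

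The main obstacle is this compatibility. Without it the claim can fail, so I would make the convention explicit and use it. Where $\preceq$ is $\preceq_0$ of Example~\ref{sur1}(i), it can be checked directly. Suppose $\one + c_1 = b_1$ and $\one + c_2 = b_2$ with $c_i\in\mcA_0$. Then in a semiring
\[ b_1b_2 = \one + (c_1 + c_2 + c_1c_2), \]
and we need $c_1 + c_2 + c_1 c_2 \in \mcA_0$. Hence $\mcA_0$ must be an ideal-like subset, closed under multiplication by $\mcA$. Again this is a standing hypothesis to invoke rather than something derivable from the bare axioms.

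For the pair structure, $\mcA_0^{\preceq \times} = \mcA^{\preceq\times}\cap\mcA_0$. This set must be closed under the $\tT$-action as far as that action stays within $\mcA^{\preceq\times}$, and must satisfy $a(b_1b_2)=(ab_1)b_2$. The latter is inherited from $\mcA$, and $a\mcA_0\subseteq\mcA_0$ holds as well. If moreover $\mcA_0$ absorbs multiplication, then the intersection is a sub-monoid-ideal, which yields the monoid-pair conditions of Note~\ref{UA}(iv). The only delicate point remains the multiplicativity of $\preceq$ used in the second step.
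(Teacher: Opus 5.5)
Your reduction is logically fine, and you are right that the bare axioms do not suffice: for an arbitrary pre-surpassing relation the statement is false. For instance, take $(\Z[x],\Z x)$ with $\tT=\{\pm 1\}$ and $\one^\dag=-1$. This pair satisfies Property~N, and $\preceq_0$ is even a surpassing relation there. We have $1\preceq_0 1+x$, yet $(1+x)^2=1+2x+x^2\not\succeq_0 1$, because $2x+x^2\notin\Z x$. Still, your proposal has a gap. Its one substantive step, closure of $\mcA^{\preceq\times}$ under multiplication, is never proved. It is postulated instead: either multiplicativity of $\preceq$, or, for $\preceq_0$, that $\mcA_0$ absorbs multiplication.

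The missing idea is the one the paper's proof uses. It writes the elements as $b_i=\one+c_i^\circ$, so the surplus is a $\circ$-element rather than an arbitrary element of $\mcA_0$, and for that relation nothing extra is needed. The $\tT$-action is additive and $\one^\dag$ is central, so $c^\circ+d^\circ=(c+d)^\circ$, $c^\circ b=(cb)^\circ$ and $b\,c^\circ=(bc)^\circ$ for all $b,c,d\in\mcA$. Thus the $\circ$-elements automatically form an ideal, and distributivity gives
\[ b_1b_2=b_2+c_1^\circ b_2=\one+\big(c_2+c_1b_2\big)^\circ . \]
Equivalently, $b'=b+c^\circ$ implies $b''b'=b''b+(b''c)^\circ$. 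This is exactly the multiplicative compatibility you wanted to assume, obtained for free.

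The identity above is the exact form of the paper's computation. The expression $\one+(c_1b_2+c_2b_1+c_1c_2)^\circ$ displayed there overcounts: in $(\Net,2\Net)$ with $b_1=3$, $b_2=5$ it gives $27$ instead of $15$. The mechanism is the same, though.

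Your ``ideal-like'' hypothesis arose only because you tested $\preceq_0$, whose surplus ranges over all of $\mcA_0$, and $\mcA_0$ need not absorb multiplication.
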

 \begin{proof}
     If $\one + c_i^\circ = b_i$ for $i=1,2$, then $\one + (c_1 b_2 + c_2b_1 + c_1c_2)^\circ =  b_1b_2.$
 \end{proof}

 \medskip

 \subsubsection{Metatangible and weakly metatangible pairs}$ $

The following kinds of pairs are especially malleable.

 \begin{definition}\label{metat}$ $
 \begin{enumerate}\eroman\item  $(\mcA,\mcA_0)$ is \textbf{weakly metatangible}
  if   $a_1+a_2 \in \tT \cup \mcA_0$ for any $a_1,a_2 $
in~$\tT.$

   \item ({\cite[Lemma~3.31]{AGR2} }) $(\mcA,\mcA_0)$ is  \textbf{metatangible} if it is   weakly metatangible and $\mcA = \tT +\mcA_0$.

   \item  As a special case, $(\mcA,\mcA_0)$ is \textbf{$\mcA_0$-bipotent}  if   $a_1+a_2 \in \{a_1,a_2\} \cup \mcA_0$ for any $a_1,a_2 $
in~$\tT.$
\end{enumerate}\end{definition}

\begin{theorem}[{\cite[Theorem~A]{AGR2}}]
\label{theoremA}For any metatangible pair $(\mathcal A, \mcA_0)$, every element $c$ of $ \mathcal A$
has a ``uniform presentation'', by which we mean there exists
$c_\tT \in \tT$ and $m_c\in \Net$, we have $ c_\tT \in \tT$,  such that
either: \begin{enumerate}\eroman \item  The pair $(\mathcal A, \mcA_0)$ is  of the first kind, with  $c = m_c c_\tT,$ or
  \item  The pair $(\mathcal A, \mcA_0)$ is  of the second kind, with one of the following:

\begin{enumerate} \item $c =\zero$, and $m_c=0.$  \item   $c = c_\tT \in \tT$ and $m_c=1$, or
  \item   $c = c_\tT^\circ$ and $m_c=2$.
\end{enumerate}
 \end{enumerate}
In (i),  for some $m_0 \ge 0,$  $\mcA_0 = \{ m_c c_\tT : m_c \ge m_0 \text{ or $m$ is even}\}.$

In (ii), $\mcA = \{\zero\}\cup \tT\cup \tT^\circ$, and $\mcA_0 = \tTz^\circ$.
  \end{theorem}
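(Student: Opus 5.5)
The plan is to first show that every sum of tangibles collapses to a multiple of a single tangible or to a circled element. After that, metatangibility, in the form $\mcA=\tT+\mcA_0$, lets us reduce an arbitrary element to such a sum. The basic tool is weak metatangibility together with uniqueness of $e$ from Property~N. For $a_1,a_2\in\tT$, either $a_1+a_2\in\tT$, or $a_1+a_2\in\mcA_0$. In the second case Property~N(ii) forces $a_2=a_1^\dag$, so $a_1+a_2=a_1^\circ=a_1e$. So a sum of two tangibles is either tangible or of the form $a^\circ$.

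I will then study how tangibles add to $a^\circ$ and to multiples $ma$, splitting by kind.

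In the first kind we have $\one^\dag=\one$. Hence $a+a=ae\in\mcA_0$, and $a^\circ = 2a$. I want to show inductively that any finite sum $\sum a_i$ of tangibles equals $m c_\tT$ for a single $c_\tT$. Adding tangibles pairwise gives either a tangible, or $2a$, and then one must understand $2a+a'$ and $ma+a'$. The key step is the following: if $a'\ne a$ then $a+a'=a''\in\tT$ (if $a+a'\in\mcA_0$, uniqueness gives $a'=a^\dag=a$). Using associativity and commutativity, $ma+a'=(m-1)a+a''$. I expect a descent or "absorption" argument: either $a''$ equals $a$, or we keep re-pairing until all summands coincide. This step is the main obstacle, because one must rule out cycling. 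I would first try to show that $a+a'\in\{a,a'\}$ or some ordering is induced by the relation "$a+a'=a$", using $(a+a')+a' = a+2a'$ and uniqueness to control it. Once every element is $m_cc_\tT$, the null part consists of the $m c_\tT$ lying in $\mcA_0$. Even $m$ always works, since $2a\in\mcA_0$. Moreover, if $m c\in\mcA_0$ then $(m+1)c\in\mcA_0$ by adding $2c$ when $m$ is odd and one passes from $m$ to $m+2$ (the converse condition of pairs is used to move between different $c_\tT$). Together these give the threshold $m_0$ description.

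In the second kind, $\one+\one\notin\mcA_0$, so $a+a\in\tT$ by weak metatangibility. I will show $\tT^\circ=\tT e$ absorbs further tangible additions: $a^\circ+a'$ is either $a^\circ$ or $a'$ or $a'^\circ$. This follows from rewriting $a^\circ+a' = a^\dag+(a+a')$ and applying the two-element dichotomy twice, together with Property~N(ii). Similarly $a^\circ+a'^\circ\in\tT^\circ\cup\{\ldots\}$. Hence $\{\zero\}\cup\tT\cup\tT^\circ$ is closed under addition. Since $\mcA_0$ is closed under addition and contains $\tT^\circ$, and $\mcA=\tT+\mcA_0$, it remains to show $\mcA_0\subseteq\tTz^\circ$. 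For this, an element of $\mcA_0$ written as $a+b_0$ must be circled, since a tangible in $\mcA_0$ is impossible in the second kind. This gives the cases (a)–(c), with $m_c\in\{0,1,2\}$.
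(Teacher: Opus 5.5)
There is a genuine gap in the step you treat as routine: the passage from sums of tangible elements to arbitrary elements. Writing $c=a+b_0$ via $\mcA=\tT+\mcA_0$ does not express $c$ as a sum of tangibles, because $b_0\in\mcA_0$ is arbitrary and nothing you prove applies to it. Your second-kind conclusion (``an element of $\mcA_0$ written as $a+b_0$ must be circled'') assumes exactly what is to be shown about $b_0$. In the first kind, ``once every element is $m_cc_\tT$'' likewise covers only sums of tangibles.

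This cannot be repaired from metatangibility in the stated form. Take the supertropical Boolean semiring $\{\zero,\one,e\}$ with $\one+\one=\one+e=e+e=e=e^2$. Adjoin an element $z$ with $z+b=z$ for all $b$ and $zb=bz=z$ for $b\ne\zero$, and set $\tT=\{\one\}$, $\mcA_0=\{\zero,e,z\}$. This is a semiring pair of the first kind satisfying Property~N and weak metatangibility. Every nonzero element lies in $\tT+\mcA_0$ (note $z=\one+z$), yet $z$ is not of the form $m\one$. The same adjunction to the pair $\{\zero,\one,\one^\dag,e\}$ with $\one+\one=\one$ breaks the second-kind conclusion $\mcA=\{\zero\}\cup\tT\cup\tT^\circ$.

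What is missing is the hypothesis that $\mcA$ is additively spanned by $\tT$. The statement tacitly needs it, so assume it explicitly. With it the reduction is immediate, and the condition $\mcA=\tT+\mcA_0$ is not even used. In the second kind, your closure of $\{\zero\}\cup\tT\cup\tT^\circ$ under addition then gives $\mcA=\{\zero\}\cup\tT\cup\tT^\circ$. Moreover $\mcA_0=\tTz^\circ$, because $\tT\cap\mcA_0=\emptyset$: a tangible in $\mcA_0$ would put $\one$, hence $\one+\one$, in $\mcA_0$ by the converse condition.

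\emph{First kind.} The ``cycling'' you fear does not occur. Since $\one^\dag=\one$, two distinct tangible summands add to a tangible, as their sum cannot be null by Property~N(ii). Replacing them by that sum lowers the number of summands, and induction on this number ends with all summands equal, i.e.\ $c=m_cc_\tT$. Drop the fallback idea that $a+a'\in\{a,a'\}$ or that some order is induced: it is false, e.g.\ for $\mathbb F_4$ with $\tT=\mathbb F_4^\times$ and $\mcA_0=\{0\}$.

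\emph{Threshold $m_0$.} Your threshold step is misstated: $mc\in\mcA_0$ does not give $(m+1)c\in\mcA_0$ for even $m$ (in $\mathbb F_2$, $2\cdot 1\in\mcA_0$ but $3\cdot 1\notin\mcA_0$). What holds is the following:
\begin{enumerate}
\item[(a)] all even multiples are null;
\item[(b)] for odd $m$, $mc\in\mcA_0$ implies $(m+2)c=mc+2c\in\mcA_0$;
\item[(c)] $mc\in\mcA_0$ if and only if $m\one\in\mcA_0$, since $mc=c(m\one)$, by the submodule and converse conditions.
\end{enumerate}
So $m_0$ is the least odd $m$ with $m\one\in\mcA_0$, possibly $\infty$.

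\emph{Second kind.} Your sketch goes through once filled in. If $a^\circ+a'$ is tangible, it equals $a'$, because adding $a'^\dag$ lands in $\mcA_0$. If it is null, regrouping it as $a^\dag+(a+a')$ or as $a+(a^\dag+a^\dag)$ shows it equals $a^\circ$. Hence $a^\circ+a'\in\{a^\circ,a'\}$, and then $a^\circ+a'^\circ=(a^\circ+a')+a'^\dag\in\{a^\circ,a'^\circ\}$.
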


\begin{lem}\label{metrev} Suppose that a  uniquely negated  metatangible pair  $(\mathcal A, \mcA_0)$      has a surpassing relation $\preceq$. \begin{enumerate}\eroman
 \item If $\sum a_i \in \mcA_0$ for $a_i\in \tT$, then either $a_1=a_2=\dots =a_t$ or $(-)a_1 \preceq \sum_{i=2}^t a_i.$

    \item $\preceq$ is $\tT$-reversible.
\end{enumerate}
\end{lem}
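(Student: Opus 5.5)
Lemma~\ref{metrev} reduces to the fissure argument of Lemma~\ref{fis}; the missing fissure-type step is supplied by weak metatangibility, applied by induction on $t$.

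For (i), suppose $\sum_{i=1}^t a_i\in\mcA_0$ with $a_i\in\tT$, and put $s=\sum_{i=2}^t a_i$. By weak metatangibility and induction, a sum of tangible elements lies in $\tTz\cup\mcA_0$, since adding a tangible element to an element of $\mcA_0$ keeps us in $\tT+\mcA_0=\mcA$. We need the sharper statement that $s\in\tTz$ or $s\in\mcA_0$. I would handle this with the uniform presentation of Theorem~\ref{theoremA}, which splits into two cases.

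\emph{Case $s=a\in\tTz$.} Then $a_1+a\in\mcA_0$, so $a=(-)a_1$ by Property~N(ii), using unique negation. Hence $(-)a_1\preceq s$ by reflexivity.

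\emph{Case $s\notin\tTz$.} Write $s=c_\tT^\circ$ (second kind) or $s=m\,c_\tT$ (first kind). Now $a_1+s\in\mcA_0$ should force $a_1\in\{c_\tT,(-)c_\tT\}$. Otherwise, comparing uniform presentations, the sum $a_1+s$ would be dominated: if $a_1+c_\tT\in\tT$, then $a_1+s=(a_1+c_\tT)+c_\tT^\dag$, and we recurse.

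The hardest step is exactly this: excluding $a_1+s\in\mcA_0$ when $a_1$ is unrelated to $c_\tT$. I would first try to show that in a metatangible pair $\mcA_0$ is absorbing in the sense that $a+c^\circ\in\mcA_0$ implies $a\in\{c,(-)c\}$ or $a+c^\circ\in\tT+\mcA_0$ with nonzero tangible part. That tangible part then contradicts $\mcA_0\cap\tTz=\{\zero\}$ (the lemma before Lemma~\ref{rev1}) together with the uniqueness of presentations.

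Granting that, when $a_1=(-)c_\tT$ we get $s\succeq c_\tT^\circ$, and $c_\tT^\circ=c_\tT+(-)c_\tT\succeq(-)c_\tT=(-)a_1$, since $\zero\preceq c_\tT$-type null summands are added via $\preceq$ preserving sums. This gives $(-)a_1\preceq s$.

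When instead all summands are forced to coincide with $c_\tT$, we are in the alternative $a_1=\dots=a_t$. This is where the exceptional clause of (i) arises, for instance in the first kind with $t\,a_1\in\mcA_0$.

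For (ii), suppose $a_0^\dag\preceq\sum_{i=1}^n a_i$. By Lemma~\ref{rev1}, adding $a_0$ to both sides gives $\sum_{i=0}^n a_i\succeq a_0^\circ\in\mcA_0$, and I would reduce to $\sum_{i=0}^n a_i\in\mcA_0$. For this reduction I would use metatangibility: write the right side as $a+c_0$ with $a\in\tTz$ and $c_0\in\mcA_0$. Surpassing an element of $\mcA_0$ then forces $a=\zero$, since $\preceq$ is a surpassing relation on $\tTz$.

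Now apply (i) to the sequence $a_1,a_0,a_2,\dots,a_n$. This gives either $(-)a_1\preceq a_0+\sum_{i\ge2}a_i$, which is the claim, or all the $a_i$ are equal to $a_1$. In the latter case $a_0=a_1$, and the claim $a_1^\dag\preceq a_0+\sum_{i\ge 2}a_i$ coincides with the hypothesis $a_0^\dag\preceq\sum_{i=1}^n a_i$, because the multisets $\{a_0,a_2,\dots,a_n\}$ and $\{a_1,\dots,a_n\}$ agree.
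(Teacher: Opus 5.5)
The decisive step of your proof of (i), the case $s\notin\tTz$, is left unproved (``Granting that''), and it is in fact false. The relation $a_1+s\in\mcA_0$ does not force $a_1\in\{c_\tT,(-)c_\tT\}$, because null elements absorb smaller tangible ones. In the supertropical pair $\mathbb T(\tG;\tG)$, which is metatangible and uniquely negated with $\one^\dag=\one$, take $s=c+c=\mu(c)=c^\circ$ and any $a_1\in\tT$ with $\mu(a_1)<\mu(c)$. Then $a_1+s=\mu(c)\in\mcA_0$, while $a_1\ne c$. The contradiction you hope to get from a ``nonzero tangible part'' and ``uniqueness of presentations'' does not exist, since $\mu(c)=a_1+\mu(c)=\zero+\mu(c)$: a decomposition $b=a+c_0$ with $a\in\tTz$, $c_0\in\mcA_0$ is far from unique. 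In this example (i) does hold, but for a reason your case analysis never uses: $(-)a_1+s=s$, so $(-)a_1\preceq_0 s$. The same non-uniqueness invalidates your argument in (ii) that $\sum_{i=0}^n a_i\in\mcA_0$ because ``the tangible part must be $\zero$''; the paper merely asserts that step.

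The branch you do carry out also breaks. Your inequality $c_\tT^\circ=c_\tT+(-)c_\tT\succeq(-)c_\tT$ would follow from compatibility with sums only if $c_\tT\succeq\zero$, which the surpassing axiom forbids for $c_\tT\in\tT$. Likewise, the alternative $a_1=\dots=a_t$ is asserted rather than derived from $a_1=c_\tT$. Neither can be repaired in the first kind, because there the statement itself fails. Let $\mcA$ consist of $\zero$ and the pairs $(\ell,g)$ with $\ell\ge 1$ an integer and $g\in\Z$, added by the rule \eqref{basicex17} and multiplied componentwise (a paired tropical extension as in Example~\ref{ex11a}(v)). Take $\tT=\{(1,g)\}$, $\mcA_0=\{\zero\}\cup\{(\ell,g):\ell\ge 2\}$, $\one^\dag=\one$, and $\preceq=\preceq_0$. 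This pair is metatangible and uniquely negated, and $\preceq_0$ is a surpassing relation on it. Now set $a_1=a_2=a_3=(1,0)$ and $a_4=(1,-1)$. Their sum is $(3,0)\in\mcA_0$ and they are not all equal, yet $(1,0)\not\preceq_0(2,0)=a_2+a_3+a_4$, since no $c\in\mcA_0$ satisfies $(1,0)+c=(2,0)$. For (ii), put $a_0=(1,-1)$: then $a_0\preceq_0(3,0)=a_1+a_2+a_3$, but $a_1\not\preceq_0 a_0+a_2+a_3=(2,0)$, so (ii) fails as well.

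The paper argues differently from you. It inducts on $t$, merging two summands whose sum is tangible (weak metatangibility) and otherwise applying Property~N(ii) pairwise to force $a=(-)a$; it never decomposes a partial sum via Theorem~\ref{theoremA}. Its argument has the same hole, though: after merging $a_3+a_4=(1,0)$, the inductive hypothesis only returns ``all equal'' for the shortened list, and that does not transfer back to the original list. An additional hypothesis is needed in the first kind (compare the condition $e+\one=e$ in Note~\ref{hypst1}(i)). Once a correct (i) is available, your handling of the all-equal alternative in (ii) via equal multisets is fine.
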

\begin{proof}
(i) By induction on $t.$ If $t=2$ then $a_1+a_2\in \mcA_0,$ so $a_2 = (-) a_1.$ So may assume that $t\ge 3.$
If $a_{t-1}+a_t \in \tT,$ we can replace $a_{t-1}$ by $a_{t-1}+a_t$ and lower $t.$ Hence we may assume that $a_{t-1}+a_t \in \mcA_0,$ and thus $a_{t-1}= (-)a_t.$ Permuting the indices shows $a_{t-2}= (-)a_t$ and $a_{t-2}= (-)a_{t-1}.$ Hence $a_{t-2}= (-)a_{t-2}$.

(ii) Suppose $a_0'\preceq \sum_{i=1}^t a_i,$ with $t\ge 3.$ Then $\sum_{i=0}^t a_i \succeq a_0 +a_0'$ so is in $\mcA_0,$ and the assertion follow from (i).

\end{proof}

\begin{MNote}\label{hypst1}$ $
    \begin{enumerate}\eroman
        \item Lemma~\ref{metrev}  does not claim that   $\preceq$ is strongly $\tT$-reversible, since we could have $a+a+a,\ a+a \in \mcA_0.$ We do get strong reversibility   for weakly metatangible pairs in which $e+\one=e$.

         \item Metatangibility was assumed in much of \cite{Row24} as a key ingredient of the theory. But in this paper, motivated by Lemma~\ref{metrev}, we  use the weaker hypothesis of $\tT$-reversibility.
    \end{enumerate}
\end{MNote}

 \subsection{Semiring Pairs} $ $

\begin{definition}\label{srp}$ $
    \begin{enumerate}
    \item
A \textbf{semiring pair} $(\mcA,\mcA_0) $ is a pair for which $\mcA$ is a semiring.

\item
    A semiring pair  $(\mcA,\mcA_0)$  is a \textbf{paired domain} if  $(a_1+a_2)b\in \mcA_0$ for $a_i\in \tT$ implies $a_1+a_2 = a_1^\circ$   or $ b_2\in \mcA_0$.

  \item $(\mcA,\mcA_0)$  is a \textbf{strongly paired domain} if  $b_1b_2\in \mcA_0$ implies $b_1 \in \mcA_0$ or $ b_2\in \mcA_0$.

 \end{enumerate}
\end{definition}


\begin{MNote}\label{clas}
 The idea is that in semiring pairs we ``see'' the  elements $\tT,$ but $\mcA_0$ controls much of the structure.
\end{MNote}
\begin{lem}
Every shallow semiring pair    is a strongly paired domain.
\end{lem}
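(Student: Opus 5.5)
Since the pair is shallow, $\mcA = \tT\cup\mcA_0$. Take $b_1,b_2\in\mcA$ with $b_1b_2\in\mcA_0$; we must show $b_1\in\mcA_0$ or $b_2\in\mcA_0$. The argument is a case split on where $b_1$ and $b_2$ lie. If either already lies in $\mcA_0$ there is nothing to prove. So the only case to handle is $b_1,b_2\in\tT$, and there we will derive a contradiction.

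Assume therefore that $b_1=a_1$ and $b_2=a_2$ are in $\tT$. First, the product $a_1a_2$ as computed in $\mcA$ is the product in $\tT$. This is because $\tTz\subseteq\mcA$ via the injection $\iota$, which is multiplicative on $\tT$: the $\tT$-action on $\mcA$ agrees with multiplication by $\iota(\tT)$. Hence $a_1a_2\in\tT$ and, by hypothesis, $a_1a_2\in\mcA_0$.

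Next, use the defining converse property of a pair (Definition~\ref{symsyst}): if $ab\in\mcA_0$ with $a\in\tT$, then $b\in\mcA_0$. Apply it with $a=a_1$ and $b=a_2$ to get $a_2\in\mcA_0$, so $b_2\in\mcA_0$. Alternatively, apply it to $a_1\cdot\one$ with $a_1\in\tT$: from $a_1\one = a_1a_2\in\mcA_0$-type reasoning one obtains $\one\in\mcA_0$, and then $\mcA=\mcA\cdot\one\subseteq\mcA_0$.

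The main obstacle is this last step. If $\tT$ is a partial monoid, $a_1a_2$ may be undefined, i.e.\ equal to $\zero$ in $\tTz$; in that case one must argue directly that $b_2\in\mcA_0$. The converse property of Definition~\ref{symsyst} already forces $b_2\in\mcA_0$ whenever $b_1\in\tT$, with no further work needed. So the cleanest write-up is this: if $b_1\in\mcA_0$ we are done; otherwise, by shallowness, $b_1\in\tT$, and the converse property applied to $b_1b_2\in\mcA_0$ gives $b_2\in\mcA_0$. I expect the only subtle point to be justifying that the semiring product $b_1b_2$ coincides with the $\tT$-action $b_1\cdot b_2$ when $b_1\in\tT$. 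This holds by the standing assumption that $\iota(\tT)$ is central and that module action and multiplication by $\iota(\tT)$ coincide.
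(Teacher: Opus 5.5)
Your final write-up is correct and is essentially the paper's proof. The paper argues that if $b_1,b_2\notin\mcA_0$, then shallowness forces $b_1,b_2\in\tT$, so $b_1b_2\notin\mcA_0$; your version makes explicit that this last step is the converse property of a pair, and correctly notes that only $b_1\in\tT$ is needed. Drop the garbled ``alternative'' through $\one\in\mcA_0$: $a_1\one=a_1a_2$ does not hold, and $\mcA\cdot\one\subseteq\mcA_0$ does not follow from $\mcA_0$ being merely a $\tT$-submodule.
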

\begin{proof}
 If $b_1,b_2\in \mcA\setminus \mcA_0$ then $b_1,b_2\in \tT,$ so $b_1b_2\notin \mcA_0.$
\end{proof}


\medskip

\subsubsection{Examples of semiring pairs}\label{ex11} $ $

Let us see how to put all of our previous definitions and examples into the context of semiring pairs and pre-surpassing relations.
\begin{example}\label{ex11a}
    Notation is from \S\ref{na}.

\begin{enumerate}\eroman
 \item  Any semiring becomes a uniquely negated pair of the first kind, when we put $\tT = \{1\}$ and $\mcA_0 = \{ b+b:b\in \mcA\}$  As a special case we could take
$\mcA=\mathbb V_n$ (Example~\ref{gro}) and $(\mcA \cap 2\Net) \cup \{  n\}.$

       \item \textbf{Supertropical pairs}. Suppose $\tG$ is an ordered abelian  monoid with absorbing minimal element $\zero_{\tG}$, and $\tT$ is a monoid, together with a monoid homomorphism $\mu: \tT\to \tG$. The supertropical semiring $T(\tT;\tG)$  of \S\ref{st} is naturally a $\tT$-module under the given action.
   $(T(\tT;\tG),\tG)$ is the \textbf{supertropical pair}. It is shallow (as opposed to many other examples), and the identity map is a negation map of the first kind, providing  the ``flavor'' of characteristic~2.
  The supertropical pair is graded by the Boolean monoid $(\mathbb B,+) = \{0,1\}$; the $0$-component is $\tT$ and the  $1$-component is $\tG.$ Another way of describing $\preceq_0$ is $b_1\preceq_0 b_2$ for $b_2\notin \tT$ when $b_1+b_2=b_2.$

   There is a projection $\mu:T(\tT;\tG) \to \tG$, so the supertropical pair lies over the max-plus algebra of $\tG,$ and is a key non-classical example which has been studied in considerable detail.

   \item  Examples of pairs satisfying Property N which are not uniquely negated, taken from \cite{AGR2}:

    \begin{itemize}
   \item \cite[Example~2.21]{AGR2}  is the semiring pair $(\mcA ,\mcA_0 )$ with $\mcA = \tTz \cup \{e\}$, $\tT$~an arbitrary monoid, and $\mcA_0 = \{ \zero, e\},$
 satisfying $b_1+b_2= e$ for all $b_1,b_2 \in \tT \cup \{e\}.$ The pair $(\mcA ,\mcA_0 )$ is $\mcA_0$-bipotent and shallow and satisfies Property N, and is \textbf{terminal} in the sense that all pairs map to it by sending $b\mapsto e$ for all $b\ne \zero$.

     \item  A terminal idempotent $\mcA_0$-bipotent pair:
 Let $\mcA = \tTz \cup \{\infty\}$ where   $a+a = a$, but $a+a' =\infty$ for each $a\ne a'\in \tT$; then $\mcA_0 = \{\zero, \infty\}.$

   \item Suppose   $\mcA=\Net[\Lambda]$(of finite support) in commuting indeterminates $\{\la_{i }: 1\le i <\infty\}$  with underlying monoid  $\tT$ generated  by the $\la _i,$ we identify all elements  of $\mcA$ of support  of order at least 3 with $e$.  $(\mcA,\mcA_0)$ is not weakly metatangible since $\la_1+\la_2 \notin \tT + \mcA_0.$
    \end{itemize}

\item Any $\tTz$-semiring $\mcA$ gives rise to a  \textbf{doubled pair} with underlying monoid $(\tT \times \zero)\cup (\zero\times \tT),$ where $\hat A$ is as in Definition~\ref{doub}, and  the  null part $\hat \mcA_0$ is as in Remark~\ref{doubb}.  $(\hat \mcA,\hat \mcA_0)$ has the negation map $(-)(b_1,b_2)=(b_2,b_1)$ of the second kind, and the surpassing relation $\preceq_0$ described here by $(b_1,b_2)\preceq_0 (b_1',b_2')$ if there exists $c\in \mcA$ such that $b_i+c=b_i',$ $i=1,2$.

  \item \textbf{Paired tropical extensions}.   If $(\tG,+)$ is an ordered abelian semigroup and   $(\mathcal L,\mathcal L_0)$ is a semiring (resp.~nd-semiring) pair, then $(\mathcal L \rtimes \tG,  \mathcal L_\zero\rtimes \tG)$ is a  semiring (resp.~nd-semiring)  pair with underlying monoid $\tT\times \tG$,  under the action $(a,g)(\ell,g') = (a\ell,g+g')$ for $a\in \tT,$ $\ell \in \mathcal L,$ $g,g'\in \tG$.\footnote{We could perform the same construction for $(\mathcal L,\mathcal L_0)  = ( \{ \one, \infty\},\{\infty\})  $  as in Note~\ref{UA},   with $\one+\one = \infty$.    In this case   $(  \mathcal L \rtimes \tG,  (\mathcal L \rtimes \tG)_\zero)$ is isomorphic to the supertropical pair.}  $(  \mathcal L \rtimes \tG,  (\mathcal L \rtimes \tG)_\zero)$ is of the first kind, and is uniquely negated if $(\mathcal L,\mathcal L_0)$ is
uniquely negated.

   \item Suppose $(\mcA,\mcA_0)$ is a semiring pair. \begin{enumerate}
\item The \textbf{monoid semiring pair}. For a monoid   $ \mcM$, $(\mcA[\mcM],\mcA_0[\mcM])$ is a semiring pair  with underlying monoid $\tT\cdot\mcM = \{ay: a\in \tT,\ y\in \mcM\}$.    Any surpassing relation on $(\mcA,\mcA_0)$ passes to $(\mcA[\mcM],\mcA_0[\mcM])$, via $(f_u) \preceq (g_u)$ if $f_u\preceq g_u$ for all $u\in M.$ The special case $(\mcA[\Lambda],\mcA_0[\Lambda])$ is called the \textbf{polynomial pair}.

 \item The \textbf{function pair}.  $(\mcA^S,\mcA_0^S)$ is a semiring pair with underlying  monoid defined as the elements of $\tT^S$ having support of order 1.  Any surpassing relation on $(\mcA,\mcA_0)$ passes to $(\mcA^S,\mcA_0^S)$, via $f \preceq g$ if $f(s)\preceq g(s)$ for all $s\in S.$

   \end{enumerate}
   \end{enumerate}
\end{example}

   In (iv), (v), and (v), we have altered $\tT;$ this will be discussed more formally in \S\ref{morp1}.
 But then, even if  $(\mcA,\mcA_0)$ is metatangible, $(\mcA[\mcM],\mcA_0[\mcM])$ and $(\mcA^S,\mcA_0^S)$ and need not be weakly metatangible.

\subsection{Hyperpairs}$ $

Our other main example comes from hypergroup theory.
Any  hypersemigroup $\mcH$ is naturally a $\tT$-module, where $\tT =\mcH$, so $\mcA:=\nsets(\mcH)$ is a $\tT$-module, with the action viewed elementwise, and $\iota:
  \mcH\hookrightarrow \mcA$ is given by $\iota(a)=\{a\}$. We get a  pair $(\nsets(\mcH) ,\nsets(\mcH)_0)$ where $\nsets(\mcH)_0 = \{S \subseteq \mcH : \zero \in S\} .$
The pair  $(\nsets(\mcH) ,\nsets(\mcH)_0),$  called the \textbf{hyperpair} of $\mcH,$ is uniquely negated when $\mcH$ is a hypergroup.\footnote{In line with our generalization of negation map to Property~N,  one can weaken the condition of the hypernegative $-1$ so as not to be unique, but only requiring the set $e =1-1$ to be unique.}

The hyperpair $(\nsets(\mcH) ,\nsets(\mcH)_0)$ has the surpassing relation $\subseteq,$  which fits  very well  into this theory, as we see in the next two results.

\begin{lem} \label{metreva} The surpassing relation $\subseteq$ on a hyperpair satisfies fissure, and thus
     is strongly  $\tT$-reversible by  Lemma~\ref{fis}.
\end{lem}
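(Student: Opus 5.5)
The plan is to verify fissure directly from the definition of hyperaddition on $\nsets(\mcH)$, and then to quote Lemma~\ref{fis}. That lemma also needs unique negation, which holds because $\mcH$ is a hypergroup, as noted just before the statement.

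\textbf{Setup.} Suppose $a_0 \subseteq \sum_{i=1}^n a_i$ with $a_i\in \tTz$. Here we identify each $a_i$ with the singleton $\{a_i\}$, and the sum is the iterated hypersum $a_1\boxplus a_2\boxplus\cdots\boxplus a_n$ in $\nsets(\mcH)$, as in Lemma~\ref{hp22}(i). We must find $a\in \tTz=\mcH$ with
\[
a_0\in a_1\boxplus a \quad\text{and}\quad a\in a_2\boxplus\cdots\boxplus a_n .
\]

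\textbf{Key step: unwinding the sum.} Write $S=a_2\boxplus\cdots\boxplus a_n$. By associativity (Definition~\ref{hy7}(1)) we have $a_1\boxplus\cdots\boxplus a_n=a_1\boxplus S$. By the definition $a\boxplus S=\bigcup_{s\in S} a\boxplus s$, we get
\[
a_1\boxplus S=\bigcup_{s\in S} a_1\boxplus s .
\]
Since $a_0$ lies in this union, there is some $s\in S$ with $a_0\in a_1\boxplus s$. Taking $a:=s$ gives both required inclusions: $a\in S$, and $\{a_0\}\subseteq a_1\boxplus a$.

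\textbf{Degenerate cases.} If $n=1$, take $a=\Hzero$; this uses $\Hzero\boxplus a=a$. If some $a_i$ equals $\Hzero$, it can be dropped by the neutrality of $\Hzero$. In every case $a\in\mcH=\tTz$, as fissure requires.

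\textbf{Conclusion.} With fissure established and the hyperpair uniquely negated, Lemma~\ref{fis} gives that $\subseteq$ is strongly $\tT$-reversible.

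\textbf{Main obstacle.} There is no substantive obstacle. The only point needing care is to read $\preceq$ as $\subseteq$ and to identify elements with singletons consistently. In particular, the $\zero$ in Lemma~\ref{fis} corresponds to $\Hzero$, so $\zero\preceq b$ means $\Hzero\in b$, which is exactly membership in the null part $\nsets(\mcH)_0$.
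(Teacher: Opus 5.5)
Your proof is correct and follows the paper's own argument. Like the paper, you rewrite the sum as $a_1\boxplus(a_2\boxplus\cdots\boxplus a_n)$, take the required $a$ to be an element $s$ of the inner hypersum with $a_0\in a_1\boxplus s$, and then invoke Lemma~\ref{fis}. Your treatment of the case $n=1$ and your remark that unique negation requires $\mcH$ to be a hypergroup are harmless additions that the paper leaves implicit.
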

\begin{proof}  Suppose $a_0 \preceq \boxplus _{i=1}^t a_i $ for $a_i\in \tT$. Then $a_0 \preceq a_1 \boxplus (a_2 \boxplus \dots \boxplus a_t),$ so there is some $a \in (a_2 \boxplus \dots \boxplus a_t)$ for which  $a_0 \in a_1\boxplus a, $ proving fissure.
\end{proof}

 ``Reversibility'' as defined in \cite{AGR2} is a special case of Lemma~\ref{metreva}.

 \begin{lem}\label{metrevc}
   Every  hyperpair   of a hyperfield  is a strongly paired domain.
 \end{lem}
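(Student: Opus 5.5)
We verify the defining condition of a strongly paired domain directly: if $S_1 S_2 \in \nsets(\mcH)_0$ for $S_1,S_2\in \nsets(\mcH)$, then $S_1\in \nsets(\mcH)_0$ or $S_2 \in \nsets(\mcH)_0$. Recall that in the hyperpair the product of two nonempty subsets is the elementwise product $S_1\cdot S_2=\{s_1s_2: s_i\in S_i\}$ of Lemma~\ref{hp22}(ii). Also $\nsets(\mcH)_0$ consists of those nonempty subsets containing $\Hzero$. So the hypothesis says exactly that $\Hzero = s_1 s_2$ for some $s_1\in S_1$, $s_2\in S_2$.

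The key step uses the hyperfield axiom that $(\mcH\setminus\{\Hzero\},\cdot)$ is a group. If both $s_1$ and $s_2$ were nonzero, then $s_1s_2$ would lie in this group and hence be nonzero, a contradiction. Hence $s_1=\Hzero$ or $s_2=\Hzero$, which means $\Hzero\in S_1$ or $\Hzero\in S_2$, i.e.\ $S_1\in\nsets(\mcH)_0$ or $S_2\in\nsets(\mcH)_0$.

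The only point needing care, and the closest thing to an obstacle, is fixing the correct multiplication on $\nsets(\mcH)$. The elementwise product of Lemma~\ref{hp22}(ii) is the one under which $\nsets(\mcH)$ is the multiplicative structure of the hyperpair. With a different product involving hypersums, the equation $\Hzero\in S_1S_2$ would not reduce to a single product $s_1s_2=\Hzero$. Once the elementwise product is fixed, the argument is just the absence of zero divisors in $\mcH$ lifted to subsets; no distributivity is needed, so the failure of double distributivity noted in Example~\ref{ex-krasner}(iii) causes no difficulty.
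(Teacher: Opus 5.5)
Your proposal is correct and is essentially the paper's proof: the paper argues the contrapositive, that if $\zero\notin S_1$ and $\zero\notin S_2$ then $\zero\notin S_1S_2$, using the same elementwise product and the fact that $\mcH\setminus\{\Hzero\}$ is a multiplicative group. Your added remark that no distributivity is needed is accurate.
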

  \begin{proof}
      If $\zero \notin S_1,S_2,$ then $\zero \notin S_1S_2 .$
  \end{proof}

\medskip
  $(\nsets(\mcH) ,\nsets(\mcH)_0)$
is weakly metatangible if and only $\mcH$ is ``stringent,'' cf.~\cite{BoS}.
 \subsection{Distributivity}\label{nddis}$ $

Since,   as in the   {phase hyperfield} of Example~\ref{hypers},  multiplication  in hyperpairs may fail to be distributive over addition, we  define
variants of distributivity.

\begin{definition}\label{dis1}
$ $
\begin{enumerate}
\item The analogous definitions from \ref{srp} hold for nd-semiring pairs.

    \item $(\mathcal A,\mcA_0)$ is $\preceq$-\textbf{distributive} with respect to a pre-surpassing relation $\preceq$, if $\sum_{i,j} a_i a_j' \preceq \sum_i a_i \sum_j a_j'$ for all $a_i,a_j'\in \tT.$
\end{enumerate}
\end{definition}
\begin{rem}\label{str}$ $
 \begin{enumerate}\eroman
     \item    Hyperpairs are $\subseteq$-{distributive}.
        \item  When the $\mcH$ is stringent, the hyperpair $(\nsets(\mcH) ,\nsets(\mcH)_0)$  is weakly metatangible and has   a shallow sub-hyperpair $(\nsets(\mcH)+\mcH, ,\nsets(\mcH)_0)$.
 \end{enumerate}
\end{rem}

\section{Roots of polynomials over pairs}

\begin{rem} $ $
\begin{enumerate} \eroman
\item Any paired  homomorphism $f: (\mcA,\mcA_0)\to (\mcA',\mcA'_0) $ extends naturally to a paired   homomorphism  $(\mcA^S ,\mcA  ^S_0) \to ({\mcA'}^S ,{\mcA'}  ^S_0)$, defined elementwise, as well as  to  a paired   homomorphism  $(\mcA[\mcM] ,\mcA  [\mcM]_0) \to ({\mcA'}[\mcM] ,{\mcA'}  [\mcM]_0)$.
    \item
    For any $\bfa = (a_i)\in \mcA^{(i)}$ there is a paired homomorphism  $$(\mcA[\Lambda] ,\mcA  [\Lambda]_0) \to (\mcA^{(I)} ,\mcA _0^{(I)})$$  where $\Lambda = \{\lambda_i: i \in I\},$  given by $\lambda_i \to a_i,$ $\forall i \in I.$
      \item (Special case for $|I|=1$)
    For any $a\in \mcA$ there is a paired homomorphism  $(\mcA[\lambda] ,\mcA _0 [\lambda]) \to (\mcA ,\mcA _0)$ given by $\lambda \to a,$ so $f\mapsto f(a)$ for all $f\in\mcA[\lambda] .$
\end{enumerate}
When $(\mcA,\mcA_0)$ is a semiring pair, these observations pertain to paired semiring homomorphisms.
\end{rem}

\begin{MNote}\label{roota}
    One basic tenet of classical algebra is that $a$ is a root of~$f$ if and only if $f \mapsto 0$ under the homomorphism $\mcA[\lambda] \to \mcA$
sending $\la \mapsto a.$
But for pairs, this must be modified to make sense for our main examples. To start with, $f(a) = \zero$ cannot hold for $f\ne \zero$ in a ZSF semiring\footnote{The way this is circumvented for semirings in general is to define a root of a pair $(f,g)$ of polynomials to be some $b\in \mcA$ such that $f(b)=g(b).$ This is precisely the definition of root in the symmetrized polynomial semiring.}.

Accordingly we have a different notion of root.
\end{MNote}

\begin{definition}\label{subs}  Suppose $\Lambda = \{ \la_i: i \in I\}.$ An element $\bfb\in \mcA^{(I)}$ is a \textbf{null root} of $f(\Lambda)$ if $f(\bfb)\in \mcA _0^{(I)},$ i.e., $\bfb \in f^{-1}(\mcA_0^{(I)})$.
\end{definition}

We call a polynomial \textbf{tangible} if its coefficients are in $\tT,$ despite the fact that the product of two polynomials need not be tangible.

\begin{rem}  The map $\mcA[\Lambda]\to \mcA$ given by $\Lambda \mapsto \bfa$ is a homomorphism if $\bfa\in \tT,$ which yields a
     paired homomorphism $(\mcA,\mcA_0)[\Lambda]\to (\mcA,\mcA_0) $, sending   $f$ to an element of $\mcA_0$ if and only if $\bfa$ is a null root of $f.$
\end{rem}
Our motivation for this definition is the Cayley-Hamilton Theorem (\ref{theoremG} below).

In classical algebra, an element $b$ is a root of $f$ if and only if $\lambda -b$ divides $f.$ Thus, we want an appropriate notion of  divisibility. The straightforward definition is not useful in general for polynomials and matrices, so we turn to alternatives.

\subsection{Factorization with respect to roots of polynomials}$ $

Throughout  this subsection, $(\mcA,\mcA_0)$ is a semiring pair\footnote{See \cite{Row26a} for the non-semiring generalization.}, and  $(\mcA[\lambda],\mcA[\lambda]_0)$ is the polynomial pair in the indeterminate $\lambda.$  We modify the treatment in \cite{Ho}.

 \begin{definition}  There are three kinds of factorization, for   $f_1,f_2$ in  $(\mcA^S,\mcA^S_0)$ or in $(\mcA[\Lambda],\mcA[\Lambda]_0)$.

  \begin{itemize}
   \item Usual division of elements in a monoid: $f_1 | f_2,$ if there is a tangible polynomial $ g$ such that $f_2 = g f_1 .$

      \item    $f_1$ $\preceq$-\textbf{divides} $f_2$, written $f_1 \, |_\preceq \, f_2,$ if  $f_2 \preceq g f_1 $  for some   tangible  $g $.

          \item    $f_1$ $\mcA_0$-\textbf{divides} $f_2,$   written $f_1 \ |_0 \ f_2,$ if  $f_2 + g f_1 \in \mcA_0$  for some   tangible~$g $.
  \end{itemize}
\end{definition}

(When $g$ is not required to be tangible, the definitions become vacuous.)

\begin{rem}\label{swd}
If $f_1 |_\preceq f_2,$ then $f_1 \ |_0 \ f_2.$

    Although division  is antisymmetric up to multiplication by a unit in a cancellative monoid,
    the situation    is trickier for polynomials over pairs, since the product of tangible polynomials need not be tangible. For example, $(\la +a)(\la(-)a) = \la^2 + (a(-)a)\la -a^2.$ So far we have not been able to circumvent this difficulty   in the arithmetic of polynomials. Nevertheless, there is a decent theory of their null roots.
\end{rem}

\begin{example}\label{exd} $ $
    $\lambda + b^\dag  \ |_\preceq \ \lambda^2 + (b ^2)^\dag$, because
    $$(\lambda + b^\dag )(\lambda +b) = \lambda^2 +(b ^2)^\dag +(b+ b^\dag )\lambda \succeq_0 \lambda^2  +b b ^\dag . $$
\end{example}

\begin{lem}
    Suppose $(\la -a) \ |_\preceq \ f,$  for a tangible polynomial $f$ i.e., $f \preceq (\la -a)g, $ for $g$ tangible. Then $\deg g = \deg f -1.$
\end{lem}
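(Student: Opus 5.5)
We compare coefficients of $f$ and $(\la - a)g$ degree by degree; here $\la - a$ means $\la + a^\dag$. Write $g = \sum_{j=0}^{m} c_j \la^j$ with $c_j\in\tT$ for those $j$ in the support and $c_m\ne\zero$. Write $f=\sum_{i=0}^n f_i\la^i$ with $f_n\in \tT$. Then
\[(\la+a^\dag)g = c_m\la^{m+1} + \sum_{j=1}^{m}(c_{j-1}+a^\dag c_j)\la^j + a^\dag c_0 .\]
By definition of the surpassing relation on the polynomial pair (Example~\ref{ex11a}(vi)(a)), the relation $f\preceq (\la+a^\dag)g$ means $f_i\preceq$ the $i$-th coefficient of $(\la+a^\dag)g$ for every $i$.

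First we show $\deg f\le m+1$. If $n>m+1$, the $n$-th coefficient of the product is $\zero$, so $f_n\preceq \zero$ with $f_n\in\tT$. Since $\preceq$ is a surpassing relation and $f_n,\zero\in\tTz$, we get $f_n=\zero$, a contradiction.

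Next we show $\deg f\ge m+1$. The top coefficient of the product is $c_m\in\tT$. If $n<m+1$, then $f_{m+1}=\zero$, so $\zero\preceq c_m$, and again by the surpassing property on $\tTz$ we get $c_m=\zero$, a contradiction. Hence $n=m+1$, i.e.\ $\deg g=\deg f-1$. As a byproduct, at degree $m+1$ we get $f_{m+1}\preceq c_m$, so $f_{m+1}=c_m$, which recovers the leading coefficient of $g$.

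The main obstacle is bookkeeping rather than substance. Both steps rely on $\preceq$ being a genuine surpassing relation, that is, antisymmetric on $\tTz$ and not merely a pre-surpassing relation, applied coefficientwise. So we must confirm that the relation on the polynomial pair is the coefficientwise one of Example~\ref{ex11a}(vi)(a). We must also confirm that "tangible" forces every nonzero coefficient to lie in $\tT$, so that the leading coefficients $f_n$ and $c_m$ are honest elements of $\tT$. The middle coefficients $c_{j-1}+a^\dag c_j$ may fail to be tangible, but they are never used in the argument.
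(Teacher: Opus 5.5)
Your proof is correct and is essentially the paper's argument. The paper just observes that the tangible leading coefficient of $f$ must be matched by the leading coefficient of $g$ in the matching degree. Your two-sided coefficientwise comparison makes this explicit, using antisymmetry of $\preceq$ on $\tTz$ to rule out both $f_n\preceq\zero$ and $\zero\preceq c_m$.
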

\begin{proof}
    The leading coefficient of $f$ is tangible, so must be matched by the  leading coefficient of $g$, with the powers of $\la$ matching.
\end{proof}

\begin{lem}\label{rt0} Given $b\in \mcA,$ let $g_{b,n} = \sum _{j=0}^{n-1}b^{n-j}\la^j. $
 \begin{enumerate}\eroman
 \item  $(\lambda + b^\dag)g_{b,n} = \la^n +(b^n)^\dag + \sum _{j=1}^{n-1}  (b^j+(b^j)^\dag )\la ^{n-j} $.

 \item $\la^n +(b^n)^\dag \preceq (\la+ b^\dag)g_{b,n}$.

  \item Suppose $f(\la) = \sum_{i=0}^n \alpha_i \la^i \in \mcA[\la].$ Let $g=  \sum _{j=1}^n\alpha_j g_{b,j} \in \mcA[\la].$ Then $$(\lambda + b^\dag)g = f(\la)  + f(b)^\dag+ \sum_{i=0}^n\sum_{j=1}^{i-1}   \alpha_i  (b^j+(b^j)^\dag )\la ^{n-j} .$$
  Hence $   f(\la)  + f(b)^\dag \preceq (\lambda + b^\dag)g $.
\end{enumerate}
\end{lem}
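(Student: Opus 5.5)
The proof is a direct computation in $\mcA[\la]$, using only that $b^\dag=\one^\dag b$ with $\one^\dag\in\tT$ central, so $b^\dag c=(bc)^\dag$ and $(c_1+c_2)^\dag=c_1^\dag+c_2^\dag$. I read $g_{b,n}$ so that the exponents of $b$ and $\la$ add up to $n-1$, i.e. $g_{b,n}=\sum_{j=0}^{n-1} b^{j}\la^{n-1-j}$; this is the reading under which (i) has leading term $\la^n$ and the middle terms $\la^{n-j}$.

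For (i), I expand $(\la+b^\dag)g_{b,n}=\sum_{j=0}^{n-1}b^j\la^{n-j}+\sum_{j=0}^{n-1}(b^{j+1})^\dag\la^{n-1-j}$. Reindexing the second sum by $j\mapsto j-1$ turns it into $\sum_{j=1}^{n}(b^j)^\dag\la^{n-j}$. Then I collect terms:
\begin{itemize}
\item the $j=0$ term of the first sum gives $\la^n$;
\item the $j=n$ term of the second sum gives $(b^n)^\dag$;
\item for $1\le j\le n-1$ the coefficient of $\la^{n-j}$ is $b^j+(b^j)^\dag=(b^j)^\circ$.
\end{itemize}

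For (ii), each $(b^j)^\circ\in\mcA_0$ by Property~N(iii). The same holds for $(b^j)^\circ\la^{n-j}$ in $\mcA_0[\la]$, since $\la^{n-j}$ lies in the underlying monoid of the polynomial pair and $\mcA_0$ is a $\tT$-submodule. Hence the sum $c=\sum_{j=1}^{n-1}(b^j)^\circ\la^{n-j}$ lies in $\mcA_0[\la]$. By the pre-surpassing axiom $\zero\preceq c$. Adding $\la^n+(b^n)^\dag$ to both sides, which is allowed by compatibility of $\preceq$ with addition, gives $\la^n+(b^n)^\dag\preceq(\la+b^\dag)g_{b,n}$.

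For (iii), I multiply (i) by $\alpha_j$ and sum over $j=1,\dots,n$, using distributivity in the semiring pair and additivity of $\dag$. This gives $(\la+b^\dag)g=\sum_{j\ge1}\alpha_j\la^j+\bigl(\sum_{j\ge1}\alpha_jb^j\bigr)^\dag+c'$, where $c'=\sum_j\sum_{i=1}^{j-1}\alpha_j(b^i)^\circ\la^{j-i}\in\mcA_0[\la]$ is a sum of null terms. The $\preceq$ conclusion then follows exactly as in (ii): everything except the main part is null, so we may add it on the larger side.

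The main obstacle is bookkeeping the constant term. The sums in the definition of $g$ start at $j=1$, so $\alpha_0$ appears in $f(\la)+f(b)^\dag$ as $\alpha_0+\alpha_0^\dag=\alpha_0^\circ$ but not in $(\la+b^\dag)g$. The main part of $(\la+b^\dag)g$ is therefore $f(\la)+f(b)^\dag$ only up to this null summand $\alpha_0^\circ$.

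To handle this I would first try to show that $\alpha_0^\circ$ is absorbed by the other null terms. If that fails, I would state the inequality modulo adding $\alpha_0^\circ$ on the right, which is harmless for applications to null roots since $\alpha_0^\circ\in\mcA_0$. I would also check the printed index ranges in the double sum of (iii) against the computation above, $\sum_j\sum_{i=1}^{j-1}\alpha_j(b^i)^\circ\la^{j-i}$, and correct them if they disagree.
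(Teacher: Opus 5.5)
Your parts (i) and (ii) are the paper's argument: expand, then use that the middle coefficients $(b^j)^\circ$ are null. Your corrections are right. With the printed $g_{b,n}=\sum_j b^{n-j}\la^j$ the leading term is $b\la^n$, so one needs $g_{b,n}=\sum_{j=0}^{n-1}b^j\la^{n-1-j}$. In (iii) the exponent in the double sum must be $\la^{i-j}$, not $\la^{n-j}$.

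The $\alpha_0^\circ$ discrepancy is also genuine, and the paper's proof of (iii) makes exactly this slip. It writes $\sum_{i=0}^n\alpha_i(\la^i+(b^i)^\dag)=\sum_i\alpha_i(\la+b^\dag)g_{b,i}$. But for $i=0$ the left summand is $\alpha_0(\one+\one^\dag)=\alpha_0^\circ$, while $\alpha_0(\la+b^\dag)g_{b,0}=\zero$.

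What your proposal leaves open is how to resolve this, and your first option cannot work. Every null term $\alpha_j(b^i)^\circ\la^{j-i}$ has degree $j-i\ge 1$. So the constant coefficient of $(\la+b^\dag)g$ is exactly $\bigl(\sum_{j\ge1}\alpha_jb^j\bigr)^\dag$, whereas that of $f(\la)+f(b)^\dag$ is $\alpha_0^\circ+\bigl(\sum_{j\ge1}\alpha_jb^j\bigr)^\dag$. Nothing in degree $0$ can absorb $\alpha_0^\circ$.

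Here is a concrete counterexample. Take the supertropical Boolean pair $(\mcB,\{\zero,\mu(\one)\})$ with $\one^\dag=\one$, and let $f=\la+\one$, $b=\one$. Then $g=\one$ and $(\la+b^\dag)g=\la+\one$, but $f(\la)+f(b)^\dag=\la+\mu(\one)$. Also $\mu(\one)\not\preceq_0\one$, since $\mu(\one)+h\ne\one$ for every null $h$.

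So both the identity and the inequality of (iii) are false as printed. You should state your second option outright rather than as a fallback:
\[ (\la+b^\dag)g+\alpha_0^\circ=f(\la)+f(b)^\dag+\sum_{j=1}^n\sum_{i=1}^{j-1}\alpha_j(b^i)^\circ\la^{j-i}, \qquad\text{hence}\qquad f(\la)+f(b)^\dag\preceq(\la+b^\dag)g+\alpha_0^\circ. \]
This reduces to the printed form (with the exponent corrected) when $\alpha_0=\zero$, or when $\alpha_0^\circ=\zero$ as in the classical case.

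Your claim that this is ``harmless'' needs qualifying. The extra null constant does not matter for $\mcA_0$-membership. However, it blocks the deduction $f\preceq(\la+a^\dag)g$ with tangible $g$ that Proposition~\ref{rt} draws from (iii). There the constant coefficient $\alpha_0\preceq\bigl(\sum_{j\ge1}\alpha_ja^j\bigr)^\dag$ needs a separate argument, e.g.\ strong $\tT$-reversibility when $\dag$ is a negation map.
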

\begin{proof} (i) Direct computation.

(ii) By (i), $(\lambda + b^\dag)g_{b,n}= \la^n +(b^n)^\dag + \sum   (b^j+(b^j)^\dag )\la ^{n-j} \succeq \la^n +(b^n)^\dag $.

 (iii)
     $f(\la)  + f(b)^\dag = \sum \alpha_i \la^i + \sum \alpha_i (b^i)^\dag  = \sum \alpha_i (\la^i +(b^i)^\dag ), $ so $$(f(\la)  + f(b)^\dag)+ \sum_{i=0}^n\sum_{j=1}^{i-1}   \alpha_i  (b ^j+(b^j)^\dag )\la ^{n-j} =  \sum   \alpha_i(\lambda + b^\dag) g_i = (\la +b^\dag) g.$$

 (iv) By (iii) and Remark~\ref{swd}.

\end{proof}

  $f(\la)\preceq (\lambda + a^\dag)g $ since $f(a)\in \mcA_0.$ So to conclude that $(\lambda + a^\dag) \ |_\preceq \ f$ we   $g$ need to be tangible.

\begin{proposition}
    \label{rt} Suppose   $(\mcA,\mcA_0)$ is metatangible, and $f$ is a tangible polynomial for which $g$ of Lemma~\ref{rt0} is   tangible.
     Then $(\la+ a^\dag)\ |_\preceq \ f(\la)$.
\end{proposition}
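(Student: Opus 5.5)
We read the statement with $a$ understood as a null root of $f$, so $f(a)\in\mcA_0$, as in the sentence preceding the proposition. The divisor is $g$ of Lemma~\ref{rt0}, taken with $b=a$, which is tangible by hypothesis. By the definition of $\preceq$-division it then suffices to show $f(\la)\preceq(\la+a^\dag)g$.

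Lemma~\ref{rt0}(iii) with $b=a$ gives
$$(\la+a^\dag)g = f(\la)+f(a)^\dag+\sum_{i=0}^n\sum_{j=1}^{i-1}\alpha_i\,(a^j+(a^j)^\dag)\la^{n-j}.$$
Call the double sum $h$. I will show that $f(a)^\dag + h$ lies in $\mcA_0[\la]$. Granting this, the minimal pre-surpassing relation $\preceq_0$ of Example~\ref{sur1}(i), applied coefficientwise in the polynomial pair, gives $f(\la)\preceq_0 f(\la)+\big(f(a)^\dag+h\big)=(\la+a^\dag)g$. This is exactly the desired relation $f(\la)\preceq(\la+a^\dag)g$, which proves the proposition.

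The membership itself splits into two pieces.

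First, $f(a)^\dag=\one^\dag f(a)$ lies in $\mcA_0$, because $f(a)\in\mcA_0$ and $\mcA_0$ is a $\tT$-submodule containing it. Viewed as a constant polynomial, it therefore lies in $\mcA_0[\la]$.

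Second, each $a^j+(a^j)^\dag=(a^j)^\circ$ lies in $\mcA_0$ by Property~N(iii). Multiplying by the tangible coefficient $\alpha_i$ keeps it in $\mcA_0$, again by the submodule property. Finally, $\mcA_0$ is additively closed, so every coefficient of $h$ lies in $\mcA_0$.

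The step needing the most care is bookkeeping rather than conceptual. The exponent $\la^{n-j}$ in Lemma~\ref{rt0}(iii) looks mis-indexed; it should presumably be $\la^{i-j}$. This does not matter here, because only membership of each coefficient in $\mcA_0$ is used.

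One should also check that the surpassing relation in use is at least as coarse as $\preceq_0$. This holds for any pre-surpassing relation: from $\zero\preceq c$ for $c\in\mcA_0$ and additivity of $\preceq$ we get $b\preceq b+c$. This is also where metatangibility enters, together with tangibility of $f$ and $g$. Those hypotheses are needed only to make $g$ a legitimate tangible cofactor; the surpassing relation itself follows purely from Property~N.
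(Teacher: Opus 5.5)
\textbf{Gap: Lemma~\ref{rt0}(iii) fails at the constant term.} Your route is the paper's own. Its proof is the single line ``apply Lemma~\ref{rt0}(iii) with $b=a$ and use $f(a)\in\mcA_0$,'' and your membership checks just expand it. But the identity from Lemma~\ref{rt0}(iii) that both arguments rest on is false at the constant term. Since you explicitly audited that lemma, this is a genuine gap in your write-up.

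Besides the $\la^{n-j}$ slip you noticed, $g_{b,n}$ must be $\sum_{j=0}^{n-1}b^{n-1-j}\la^j$; as printed, $(\la+b^\dag)g_{b,1}=b\la+(b^2)^\dag$. With that correction, $(\la+a^\dag)g=\sum_{i\ge1}\alpha_i(\la+a^\dag)g_{a,i}$ has no $i=0$ piece. By contrast, $f(\la)+f(a)^\dag=\sum_{i\ge0}\alpha_i(\la^i+(a^i)^\dag)$ contains $\alpha_0(\one+\one^\dag)=\alpha_0^\circ$. The true identity is
\[
(\la+a^\dag)g+\alpha_0^\circ=f(\la)+f(a)^\dag+h ,
\]
so what you have actually shown is $f\preceq_0(\la+a^\dag)g+\alpha_0^\circ$, which is weaker than the claim. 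Concretely, in the supertropical pair take $f=\la+\alpha_0$, $a=\alpha_0$, $g=\one$. Then $(\la+a)g=\la+\alpha_0$, but $f(\la)+f(a)^\dag+h=\la+\mu(\alpha_0)$. The constant term of $(\la+a^\dag)g$ simply does not contain $\alpha_0$ as a summand, so ``adding an element of $\mcA_0$ to $f$'' does not describe it.

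\textbf{Repair: compare constant coefficients directly.} The proposition survives, and this is where tangibility of $g$ does real work. For $k\ge1$, the coefficient of $\la^k$ in $(\la+a^\dag)g$ is $\alpha_k+\sum_{i>k}\alpha_i(a^{i-k})^\circ\succeq_0\alpha_k$, exactly as you argue. The constant coefficient is $a^\dag c=(ac)^\dag$, where $c=\sum_{i\ge1}\alpha_i a^{i-1}$ is the constant term of $g$. So $ac\in\tTz$ by the hypothesis on $g$, for $a\in\tT$. Now $ac+\alpha_0=f(a)\in\mcA_0$ with both summands in $\tTz$. If one of them is $\zero$, so is the other, because $\mcA_0\cap\tTz=\{\zero\}$. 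Otherwise Property~N(ii), applied to $ac+\alpha_0$, gives $\alpha_0=(ac)^\dag$. Hence the constant coefficients coincide, and $f\preceq(\la+a^\dag)g$ follows.

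Your closing remark should therefore be revised. The surpassing relation does not follow from Property~N(iii) alone; it also needs N(ii) together with tangibility of $g$. Meanwhile, metatangibility is used nowhere: not in your argument, not in the paper's, and not in the repaired one.
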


\begin{proof}
 $(\lambda + a^\dag)g =  f(\la)  + f(a)^\dag+ \sum_{i=0}^n\sum_{j=1}^{i-1}   \alpha_i  (a^j+(a^j)^\dag )\la ^{n-j} ,$     by Lemma~\ref{rt0}(iii),  implying $f(\la)\preceq (\lambda + a^\dag)g $ since $f(a)\in \mcA_0.$
\end{proof}

\begin{rem} $(\la+ a^\dag)\ |_0 \ f(\la)$, under the hypothesis of Proposition~\ref{rt}.
The converse  is false in general, since  one can have $a+a(-)a \in \mcA_0$, for $a\in \tT,$ in which case $\la(-)a \, |_0 \, \la,$ and $a$ is not a null root of $\la.$

For a counterexample to the converse of Proposition~\ref{rt}, let $\mathcal{F}$ be the polynomial semiring $\Net[\lambda]$, and $\mcA$ be the doubled semiring of $\mcF [x_1,x_2,x_3]$, the polynomial semiring in three indeterminates, and take $$\mcA_0 =\diag + (x_1x_2 +x_1 x_3+x_2x_3)\mcA +(\sum x_i)\mcA.$$    $$f(\la) =  \la^3   ( -)x_1x_2x_3.$$
 $(\la (-) x_1)(\la (-) x_2)(\la (-) x_3) = \la^3 (-) \sum x_i \la^2 +(x_1x_3+x_2x_3+x_1x_2)\la (-)x_1x_2x_3 = f(-) \sum x_i \la^2+(x_1x_3+x_2x_3+x_1x_2)\la,$
implying $(\la -a_i)\ |_\preceq \ f(\la)$ for each $i.$ But $f(x_1) =    x_1(x_1^2(-)x_2x_3)\notin \mcA_0,$ since it has too few monomials. \end{rem}

\begin{definition}\label{prec2} Suppose $(\mcA,\mcA_0)$  is a pair with a negation map $(-)$ and a pre-surpassing relation $\preceq.$
An element    $a\in \tT$ is a  \textbf{factor-root} of $f$ if $f \preceq (\la(-)a)g$ for some tangible polynomial $g.$
\end{definition}

Gunn~\cite[Proposition~4.7]{gu} has an argument for null roots being factor-roots in a certain instance.
We modify his argument to provide a rather general fundamental result.
\begin{theorem}\label{root1}
 Suppose $(\mcA,\mcA_0)$   has a $\tT$-reversible surpassing relation $\preceq$ (which by Lemma~\ref{rever1} implies $(\mcA,\mcA_0)$ is uniquely negated), and $f\in \mcA[\la]$ with $a\in\tT$.
 \begin{enumerate}\eroman  \item Every factor-root $a$ of $f$   is a null root of $f$.
     \item If $\preceq$ satisfies fissure,    then every tangible null root of each tangible polynomial  is~a factor-root.
 \end{enumerate}
\end{theorem}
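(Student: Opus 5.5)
Both parts are coefficient comparisons in $\mcA[\la]$, where the relation $\preceq$ is taken coefficientwise as in the monoid semiring pair of Example~\ref{ex11a}. Write $f=\sum_{i=0}^n \alpha_i\la^i$ and, for a tangible $g=\sum_{j=0}^{n-1}\beta_j\la^j$, expand
\[
(\la(-)a)g=\beta_{n-1}\la^n+\sum_{i=1}^{n-1}\bigl(\beta_{i-1}(-)a\beta_i\bigr)\la^i(-)a\beta_0 .
\]

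For (i), suppose $f\preceq(\la(-)a)g$ with $g$ tangible. Substituting $\la\mapsto a$ respects $\preceq$, since $\preceq$ is compatible with addition and with multiplication by elements of $\tT$ (powers of $a$ lie in $\tT$). Hence $f(a)\preceq\bigl((\la(-)a)g\bigr)(a)$. The right side equals
\[
\sum_j \beta_j a^{j+1}(-)\sum_j a\beta_j a^j=\sum_j(\beta_ja^{j+1})^\circ\in\mcA_0,
\]
using that $\tT$ is central. So it remains to show that an element surpassed by something in $\mcA_0$ is itself in $\mcA_0$. This is the delicate point, since a general pre-surpassing relation need not satisfy it. I would instead argue through $\tT$-reversibility. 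Write $f(a)$ as a sum of tangibles $\sum_i\alpha_ia^i$. The relation $f\preceq(\la(-)a)g$ gives, coefficientwise, $\alpha_i\preceq\beta_{i-1}(-)a\beta_i$. Apply $\tT$-reversibility to each of these: together with Lemma~\ref{rev1}, this moves the tangible $\alpha_i$ across and yields $\zero\preceq\alpha_i(-)\beta_{i-1}+a\beta_i$. Multiply by $a^i$ and sum over $i$; the $\beta$ terms telescope into quasi-zeros $(\beta_ja^{j+1})^\circ$. This gives $\zero\preceq f(a)^\dag+c^\circ$, and applying $\dag$ gives $\zero\preceq f(a)+(\text{null})$. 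I expect the main obstacle to be the last step, deducing $f(a)\in\mcA_0$ from this. I would first try to get it from $\preceq$ restricted to $\mcA_0$ (interpreting ``null root'' via $\zero\preceq f(a)$ if needed), and otherwise from reversibility applied to the tangible summands.

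For (ii), assume $f$ is tangible and $f(a)\in\mcA_0$ with $a\in\tT$. I would construct $g$ by synthetic division from the top, using fissure to keep the coefficients tangible. Set $\beta_{n-1}=\alpha_n$. Since $\zero\preceq\sum_i\alpha_ia^i$, strong $\tT$-reversibility (Lemma~\ref{fis}, available because fissure and unique negation hold) gives $(-)\alpha_na^n\preceq\sum_{i<n}\alpha_ia^i$. Fissure then splits off the $i=n-1$ term: there is a tangible $c_{n-1}$ with $(-)\alpha_na^n\preceq \alpha_{n-1}a^{n-1}+c_{n-1}$ and $c_{n-1}\preceq\sum_{i<n-1}\alpha_ia^i$. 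Define $\beta_{n-2}$ by $\beta_{n-2}a^{n-1}=(-)c_{n-1}$, which is tangible after dividing by $a^{n-1}$; this uses torsion-freeness, and I would assume $a$ is invertible or that the relevant quotient lies in $\tT$. Reversibility on the first relation then gives exactly the coefficient relation $\alpha_{n-1}\preceq\beta_{n-2}(-)a\beta_{n-1}$. Iterate downward, with the second relation feeding the next step. At the bottom, the remaining relation gives $\alpha_0\preceq(-)a\beta_0$. The resulting $g$ is tangible and satisfies $f\preceq(\la(-)a)g$, so $a$ is a factor-root.

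The bookkeeping of $\dag$ versus $(-)$ is handled by unique negation. The real obstacle is the division by powers of $a$ in $\tT$, which is where I expect extra hypotheses to be needed.
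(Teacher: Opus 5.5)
Part (i) has a genuine gap, and it is the one you anticipate. Your relation $\zero\preceq\alpha_i(-)\beta_{i-1}+a\beta_i$ does not actually use $\tT$-reversibility. It follows from $\alpha_i\preceq\beta_{i-1}(-)a\beta_i$ just by adding $(-)\alpha_i$ to both sides and negating, as in Lemma~\ref{rev1}, and this is where the information is lost. After multiplying by $a^i$ and summing, nothing cancels in a pair. You only get $\zero\preceq f(a)+\sum_j(\beta_ja^{j+1})^\circ$, and a large quasi-zero swamps $f(a)$. Concretely, in the supertropical pair with $\preceq_0$, $\zero\preceq_0 b+a^\circ$ holds for every $b$ with $\mu(b)\le\mu(a)$. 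For example, $f=\la+\alpha_0$, $g=\one$ with $\mu(\alpha_0)<\mu(a)$ satisfies your summed relation, although $f(a)=a\in\tT$. So no argument can finish from that relation, neither via $\preceq$ on $\mcA_0$ nor via reversibility on the summands.

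The missing idea, which is the paper's proof, is to use reversibility to swap the \emph{other} tangible. For $i\ge1$, start from $\alpha_i\preceq\beta_{i-1}(-)a\beta_i$ and apply $\tT$-reversibility with $a_0=(-)\alpha_i$, $a_1=\beta_{i-1}$. This gives $(-)\beta_{i-1}\preceq(-)\alpha_i(-)a\beta_i$, i.e.\ $\beta_{i-1}\preceq\alpha_i+a\beta_i$. This relation has a single tangible on the left, so it can be substituted into the previous one (like the paper, this tacitly needs $\alpha_i\in\tT$). Tangibles comparable under a surpassing relation are equal. Hence $\alpha_0\preceq(-)a\beta_0$ gives $\alpha_0+a\beta_0=(a\beta_0)^\circ\in\mcA_0$, and $\alpha_n\preceq\beta_{n-1}$ gives $\beta_{n-1}=\alpha_n$. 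Then, Horner-style,
\[\zero\preceq\alpha_0+a\beta_0\preceq\alpha_0+a\alpha_1+a^2\beta_1\preceq\cdots\preceq\sum_{i=0}^{n-1}\alpha_ia^i+a^n\beta_{n-1}=f(a).\]
No quasi-zeros are ever added, and you land on $\zero\preceq f(a)$. This is what the paper takes as the null-root condition; it coincides with $f(a)\in\mcA_0$ for $\preceq_0$.

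Your part (ii) is a sound fleshing-out of the paper's one-line ``reverse the argument of (i)''. You do top-down synthetic division via strong reversibility and fissure, then use reversibility to recover the coefficient relations. The divisibility of the fissure elements by powers of $a$ that you flag is a real point, which the paper's sketch also passes over. It is harmless when $a$ is invertible in $\tT$, e.g.\ over a hyperfield.
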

\begin{proof}
 (i) Suppose $f\, |_\preceq \,    (\la (-)a) g = \la  g (-)a g.$ Write $f = \sum \alpha_i \la^i$ and $g =\sum \beta_i \la^i.$ Matching coefficients we get $\alpha_i \preceq  (-) a\beta_{i} + \beta_{i-1} $, $i\ge 1,$
implying  $(-)\beta_{i-1}  \preceq (-)\alpha_i (-) a\beta_{i}$ since $\beta_{i-1}$ is presumed tangible, so
 $ \beta_{i-1}  \preceq  \alpha_i + a\beta_{i}$,  and $\alpha_0 \preceq (-)a \beta_0,$ implying
 $$\zero \preceq \alpha_0 +a \beta_0\preceq  \alpha_0 +a \alpha_1  +a ^2\beta_{1}\preceq  \sum _{i=0}^{n-1} \alpha_i a^i + a^n \beta_n = \sum _{i=0}^{n-1} \alpha_i a^i +\alpha_n a^n = f(a).$$

 (ii) Reversing the argument of (i), following the proof of \cite[Proposition~4.7]{gu}.
\end{proof}



 \subsection{Simultaneous roots}$ $

We  can treat several factor-roots simultaneously if $(\mcA,\mcA_0)$ is a  paired domain.

\begin{proposition}\label{sha} Suppose that a pair $(\mcA,\mcA_0)$ is an  paired domain with a $\tT$-reversible surpassing relation $\preceq$ satisfying fissure, such that either $\mcA$  is  a semiring  or $(\mcA,\mcA_0)$ is shallow. Then
    a monic polynomial $f$ cannot have more than $n$ distinct tangible factor-roots. If $a_1,\dots, a_n$ are  distinct tangible factor-roots then $f \preceq \prod_{i=1}^n (\la (-) a_i) .$
\end{proposition}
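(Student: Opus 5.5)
The plan is to induct on the number $k$ of distinct tangible factor-roots, with $n=\deg f$. We strip off one linear factor at a time and keep the quotient tangible and monic.

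\emph{Base step.} Let $a_1$ be a tangible factor-root of $f$. By definition $f\preceq (\la(-)a_1)g_1$ with $g_1$ tangible. Comparing leading coefficients, exactly as in the lemma just before Lemma~\ref{rt0}, shows that $g_1$ is monic of degree $n-1$, since the leading coefficient $\one$ of $f$ is tangible and must be matched.

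\emph{Transferring the remaining roots to $g_1$.} Take another factor-root $a_2\ne a_1$. By Theorem~\ref{root1}(i), $a_2$ is a null root of $f$. Evaluating at $\la=a_2$ is a paired homomorphism, and $\preceq$ is respected by the $\tT$-action and by addition, so
\[
\zero\preceq f(a_2)\preceq (a_2(-)a_1)\,g_1(a_2).
\]
The right side therefore lies in $\mcA_0$. To be precise, we should first extract from the coefficientwise relations $\alpha_i\preceq\beta_{i-1}(-)a_1\beta_i$ the conclusion $(a_2(-)a_1)g_1(a_2)\in\mcA_0$.
\begin{itemize}
\item Since $a_1,a_2$ are distinct tangibles, Property~N(ii) (uniqueness of $e$) gives $a_2(-)a_1\neq a_2^\circ$.
\item The paired-domain hypothesis, applied to $(a_2+a_1^\dag)\,g_1(a_2)\in\mcA_0$, then forces $g_1(a_2)\in\mcA_0$. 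So $a_2$ is a tangible null root of the tangible polynomial $g_1$.
\item By Theorem~\ref{root1}(ii), using fissure and $\tT$-reversibility, $a_2$ is a factor-root of $g_1$: $g_1\preceq(\la(-)a_2)g_2$ with $g_2$ tangible and monic of degree $n-2$.
\end{itemize}
Here the alternative hypothesis ``semiring or shallow'' is what lets the product $(\la(-)a_1)(\la(-)a_2)g_2$ and its evaluation behave distributively. In the shallow case, every element is in $\tT\cup\mcA_0$, so membership in $\mcA_0$ can be tested directly.

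\emph{Iterating.} Multiplying $g_1\preceq(\la(-)a_2)g_2$ by $(\la(-)a_1)$ preserves $\preceq$, using distributivity in the semiring case. Hence $f\preceq (\la(-)a_1)(\la(-)a_2)g_2$. The same argument applies to $a_3,\dots,a_k$: at each stage the new root is shown to be a null root of the current quotient via the paired-domain property, and hence a factor-root of it. After $k$ steps,
\[
f\preceq \prod_{i=1}^k(\la(-)a_i)\,g_k,\qquad g_k \text{ tangible and monic of degree } n-k.
\]
\begin{itemize}
\item If $k>n$, then at step $n+1$ the polynomial $g_n$ is a tangible monic constant, i.e.\ $g_n=\one$. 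It would have to admit a factor-root, giving $\one\preceq(\la(-)a)h$ with $h$ tangible of degree $-1$. That is impossible, since then $h=\zero$ and $\one\preceq\zero$ contradicts the surpassing property on $\tTz$. So there are at most $n$ distinct tangible factor-roots.
\item If $k=n$, then $g_n=\one$, which yields $f\preceq\prod_{i=1}^n(\la(-)a_i)$.
\end{itemize}

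\emph{Main obstacle.} The hardest step is passing the null-root property from $f$ down to the quotient $g_{j}$. First, one must show that $f(a_{j+1})\preceq\prod_{i\le j}(a_{j+1}(-)a_i)\,g_j(a_{j+1})$ really gives membership in $\mcA_0$ of the product. Second, one must cancel the factors $a_{j+1}(-)a_i$ one at a time with the paired-domain axiom, which only allows cancelling a \emph{sum of two tangibles} $a_1+a_2\neq a_1^\circ$. I would handle this by peeling one factor at a time. The argument multiplies the \emph{tangible} $g_j(a_{j+1})$-side relation only by a single binomial $(a_{j+1}(-)a_i)$, rather than the whole product, and it uses $\tT$-reversibility to absorb the non-tangible cross terms into $\mcA_0$.
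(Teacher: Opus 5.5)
Your strategy is the paper's: split off $\la(-)a_1$ with a monic tangible quotient $g_1$, show that the remaining factor-roots are null roots of $g_1$, upgrade them to factor-roots by Theorem~\ref{root1}(ii), and induct on the degree. When $\mcA$ is a semiring this is sound.
\begin{itemize}
\item Your appeal to Property~N(ii), together with invertibility of $\one^\dag$, correctly gives $a_2+a_1^\dag\ne a_2^\circ$.
\item The paired-domain axiom allows an arbitrary $b$, so the factors $a_{j+1}(-)a_i$ can simply be peeled off one at a time. The remedy in your ``main obstacle'' paragraph is therefore unnecessary. As written it also rests on treating $g_j(a_{j+1})$ as tangible, which it need not be.
\item Your explicit derivation of $f\preceq\prod(\la(-)a_i)$, by multiplying coefficientwise relations by $\la(-)a_1$, fills in what the paper leaves implicit.
\end{itemize}
Like the paper, you tacitly use that $\zero\preceq b$ forces $b\in\mcA_0$; this holds for $\preceq_0$ and for $\subseteq$.

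The gap is the other half of the hypothesis: a shallow pair in which $\mcA$ need not be a semiring. There you still write the evaluated product as $(a_2+a_1^\dag)\,g_1(a_2)$ and apply the paired-domain axiom. You justify this by saying that shallowness ``lets the product and its evaluation behave distributively,'' but it does not. What you actually have is $\zero\preceq a_2g_1(a_2)+a_1^\dag g_1(a_2)$, and factoring is licensed only when $g_1(a_2)\in\tT$, via the $\tT$-module axioms.

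The missing idea, which is exactly the paper's shallow-case argument, is a dichotomy. Suppose $g_1(a_2)=c\in\tT$. Then $c(a_2+a_1^\dag)\in\mcA_0$, so $a_2+a_1^\dag\in\mcA_0$ by the converse axiom of a $\tT$-pair. Property~N(ii) then gives $a_1^\dag=a_2^\dag$, hence $a_1=a_2$, a contradiction. So $g_1(a_2)\notin\tT$, and shallowness gives $g_1(a_2)\in\mcA_0$.

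For the same reason, your iteration is not justified in the shallow case, since it evaluates $\prod_{i\le j}(\la(-)a_i)\,g_j$ at $a_{j+1}$ as if it were a product of evaluated factors. The clean fix is to apply the one-step argument to every $a_j$ with $j\ge 2$, against the single relation $f\preceq(\la(-)a_1)g_1$. This makes $a_2,\dots,a_k$ factor-roots of the monic tangible $g_1$ of degree $n-1$, and you then induct. The base case is $\deg f=0$, i.e.\ $f=\one$, which has no null roots since $\one\notin\mcA_0$.
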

    \begin{proof}
        Writing $f \preceq(\lambda (-) a_1)g$ for $g$ monic, we have $\zero \preceq f(a_2) = a_2g(a_2) (-)a_1g(a_2). $

        If  $\mcA$  is  a semiring then $(a_2(-)a_1) g(a_2) \in \mcA_0,$ implying $g(a_2)\in \mcA_0.$

       If $(\mcA,\mcA_0)$ is  shallow and $g(a_2)\in \tT$, then $a_2g(a_2) (-)a_1g(a_2) = (a_2 (-)a_1)g(a_2),$ implying $a_2(-)a_1\in \mcA_0,$ so $a_1=a_2.$ Hence $g(a_2) \in \mcA_0,$ clearly of degree $n-1.$ Continuing inductively, and noting $\deg g = \deg f -1,$ we cannot have more than $n$ tangible factor-roots of $f$.
    \end{proof}

For application to hyperfields, see~Remark~\ref{str}.

\begin{rem}
This leaves us with multiple roots.
In classical algebra we say that $a$ is a double root of $f$ when $(\la (-) a)^2$ divides $f.$ Let us compare several options for generalization.
    \begin{enumerate}\eroman
    \item $a$ is a \textbf{double factor-root} of $f$ if $(\lambda (-) a)^2\, |_\preceq \, f,$ i.e., $(\la (-) a)^2 h \succeq f$ for some tangible polynomial $h$.

        \item (Like Gunn's version) $a$ is a \textbf{double factor-root} of $f$ if, for some tangible polynomial $g$, $f (-) (\lambda (-) a)g \in \mcA_0[\la]$ and $a$ is a factor-root of   $g$.

    \item $a$ is a \textbf{double factor-root} of $f$ if, for some tangible polynomial $g$, $f (-) (\lambda (-) a\preceq g$ and $a$ is a factor-root of   $g$.

         \item Define the formal derivative $f'$ of $f = \sum \alpha_i \la^i$ to be $\sum i \alpha_i \la^{i-1}.$
$a$ is a \textbf{double factor-root} of $f$ if $a$ is a factor-root of both $f$ and $f'$.
  \end{enumerate}
    These four versions are known to be equivalent in classical algebra, so which should we choose?
     Version (i) may seem the most natural at this stage, but we would like a double root also to be a single root. If  $(\la (-) a)^2 h \succeq f$ then clearly $(\lambda (-) a)g \succeq f $ for $g = (\la (-) a) h, $  but unfortunately $ (\la (-) a) h$ need not be tangible. This leads to (ii), in which Descartes' rule of signs has been proved for tropical extensions, in \cite{BaL,gu,AGT}, but there is ambiguity with respect to $g$, which leads to complications.  We could turn to   (iv), which is straightforward and unambiguous, but unfortunately $f'$ need not be tangible! But $f'$ indeed is tangible when no multiple of $a$ is in $\mcA_0$. We favor (iii), which is closest to a repetition of Definition~\ref{prec2}, and which we  use in \cite{Row26a}.

\end{rem}
\medskip

\section{Geometry}

\begin{definition} Suppose $\mcI$ is a $\tT$-submodule of   $\mcA.$ An
     $\mcI$-{\textbf{congruence pair}} $(\Cong,\Cong_\mcI)$ on $\mcA$ is a congruence $\Cong$ on $\mcA \times \mcA,$ together with its restriction $\Cong_\mcI$ to $\mcI\times \mcI.$

\end{definition}
\begin{rem}
    \label{conpr}$ $
    \begin{enumerate}\eroman
        \item

In particular, any congruence $\Cong$ can also be viewed as the $\mcA_0$-congruence pair $(\Cong,\Cong_0)$.

  \item Any paired map $\varphi:(\mcA,\mcA_0)\to (\mcA',\mcA'_0)$ yields, taking $\mcI = \varphi^{-1}(\mcA'_0)$, the $\mcI $-congruence pair $(\Cong_\varphi,{\Cong_\varphi}_\mcI)$, where $\Cong_\varphi := \{ (b_1,b_2): \varphi(b_1)= \varphi(b_2)\}.$
    \end{enumerate}
\end{rem}

\subsection{The
Zariski correspondence}$ $

Building on Definition~\ref{subs}(ii), given a set of polynomials $\mathcal I\subseteq \mcA[\Lambda],$ where  $\Lambda = \{ \la_i: i \in I\},$ define its \textbf{$\mcA_0$-locus}   $Z (\mathcal I)$ to be  $\{\bfb \in \mcA^{(I)}: f(\bfb)\in \mcA _0^{(I)}\}$.

There is a naive correspondence mimicking Zariski's classical correspondence, taking an ideal of $\mcA[\Lambda]$ to its $\mcA_0$-locus, with the reverse correspondence an ideal of $\mcA[\Lambda]$ to its null roots. But we want a correspondence to congruences, since they are more appropriate to semiring theory.

\begin{MNote}
    There is a $\tTz$-semiring paired homomorphism $$\varphi_\bfb  : (\mcA[\Lambda],\mcA[\Lambda]_0) \to  (\mcA^{(I)},\mcA^{(I)}_0) $$ defined by $\la_i \mapsto \bfb .$ Since, by Remark~\ref{conpr}(ii), the paired homomorphism  $(\varphi_\bfb,\varphi_{\bfb_0}):  (\mcA[\Lambda],\mcA[\Lambda]_0) \to  (\mcA^{(I)},\mcA^{(I))}_0) $ is defined in terms of an $\mcI$-congruence pair $\Cong_{\varphi_\bfb}$, where $\mcI = \varphi_\bfb^{-1}(\mcA'_0),$ we can define the congruence $\congg(Z) = \cap_{\bfb \in Z} \Cong_{\varphi_\bfb} .$

Conversely, any congruence $\Cong$ on $(\mcA[\Lambda],\mcA[\Lambda]_0)$ defines its $\mcA_0$-\textbf{set} $\{ b \in \mcA : f(b)=g(b) \in \mcA_0, \ \forall (f,g)\in \Cong\}.$
Clearly $Z(\congg(Z)) = Z.$
\end{MNote}

\begin{definition} $(Z,\congg)$ is the \textbf{Zariski correspondence} of pairs.
    A congruence $\Cong$ of $(\mcA[\Lambda],\mcA[\Lambda]_0)$ is \textbf{radical} if $\congg(Z(\Cong)) =\Cong.$
\end{definition}

\subsection{Spectrum of prime congruences}$ $

Having identified $A_0$-loci with congruences, one is led to study the spectrum of prime congruences. Joo and Mincheva \cite{JM} came up with the following  definition, which we state for  arbitrary  nd-semiring pairs.

\begin{definition}\label{twist2}  Suppose that $(\mcA,\mcA_0)$ is an nd-semiring pair, and $\Cong_i$ are congruences, viewed as subsets of $\widehat{\mathcal A}$, cf.~Definition~\ref{doub}.
\begin{enumerate}\eroman
\item
The \textbf{twist product} $\Cong_1 \ctw \Cong_2 := \{ \bfb_1 \ctw \bfb_2 : \bfb_i\in \Cong_i \}.$

\item
A congruence $\Cong$ of $\mcA$ is \textbf{semiprime} if it satisfies the following   condition: For a congruence $\Cong_1 \supseteq  \Cong$, if $\Cong_1 \ctw \Cong_1  \subseteq \Cong$  then $\Cong_1  = \Cong$.

\item
A congruence $\Cong$ of $\mcA$ is \textbf{prime} if it satisfies the following  condition: For  congruences  $\Cong_1$, $\Cong_2 \supseteq  \Cong$, if $\Cong_1 \ctw \Cong_2 \subseteq \Cong$, then $\Cong_1
= \Cong$ or $\Cong_2 = \Cong$.
\item
A congruence $\Cong$ of $(\mcA,\mcA_0)$ is \textbf{irreducible} if
whenever $\Cong_1 \cap \Cong_2 = \Cong$ for  congruences $\Cong_1$,
$\Cong_2 \supseteq  \Cong$, then $\Cong_1  = \Cong$ or $\Cong_2 =
\Cong$.

\end{enumerate}
\end{definition}

\begin{rem} $ $ \begin{itemize}
   \item The intersection of
semiprime congruences is  semiprime.

    \item A congruence $\Cong$ is prime iff $\Cong $ is semiprime and irreducible.
\end{itemize}
\end{rem}

 In classical algebra, Levitzki proved that any semiprime  ideal $\Cong$ is the intersection of
prime ideals,
which is a cornerstone in the theory of the prime spectrum.

\begin{lem}
    \label{commpr}
\protect {(As in \cite[Lemma~4.12]{Row25})} Suppose   $S\subseteq \widehat{A}$
is any subset disjoint from  $\Cong,$ satisfying  the property that for any congruences $\Phi_1,   \Phi_2 \supseteq \Phi,$ if $S\cap\Phi_1, S \cap \Phi_2$ are nonempty then $S \cap (\Phi_1\ctw \Phi_{2})\ne \emptyset.$  Then  there is
a congruence
$\Cong'\supseteq \Cong$ of $\mcA$ maximal with respect to being disjoint from~$
S$, and $\Cong'$ is prime.
\end{lem}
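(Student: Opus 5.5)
Zorn's lemma gives a maximal congruence disjoint from $S$, and primeness then follows from maximality and the hypothesis on $S$, exactly as in the classical proof that an ideal maximal with respect to avoiding a multiplicative set is prime.

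\textbf{Existence.} Consider the family $\mathcal F$ of all congruences $\Cong''$ of $\mcA$ with $\Cong''\supseteq\Cong$ and $\Cong''\cap S=\emptyset$, partially ordered by inclusion. It is nonempty, since $\Cong\in\mathcal F$ by hypothesis. Let $\{\Cong_\alpha\}$ be a chain in $\mathcal F$. Its union is again a congruence:
\begin{itemize}
\item reflexivity and symmetry hold because each $\Cong_\alpha$ has them;
\item transitivity, and closure under the operations $+$, $\cdot$ and the $\tT$-action, each involve only finitely many pairs, so those pairs lie in a single $\Cong_\alpha$ of the chain, where the closure holds.
\end{itemize}
The union is disjoint from $S$ because each member is. Hence every chain has an upper bound in $\mathcal F$, and Zorn's lemma yields a maximal element $\Cong'\in\mathcal F$.

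\textbf{Primeness.} Suppose $\Cong_1,\Cong_2\supseteq\Cong'$ are congruences with $\Cong_1\ctw\Cong_2\subseteq\Cong'$, and assume for contradiction that $\Cong_1\ne\Cong'$ and $\Cong_2\ne\Cong'$. Then each $\Cong_i$ properly contains $\Cong'$. Since $\Cong_i\supseteq\Cong'\supseteq\Cong$, maximality of $\Cong'$ in $\mathcal F$ forces $\Cong_i\notin\mathcal F$, that is, $S\cap\Cong_i\ne\emptyset$ for $i=1,2$. Both $\Cong_i$ contain $\Cong$, so the hypothesis on $S$ applies and gives $S\cap(\Cong_1\ctw\Cong_2)\ne\emptyset$. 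But $\Cong_1\ctw\Cong_2\subseteq\Cong'$ and $\Cong'\cap S=\emptyset$, a contradiction. Hence $\Cong_1=\Cong'$ or $\Cong_2=\Cong'$, so $\Cong'$ is prime in the sense of Definition~\ref{twist2}(iii).

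\textbf{Main obstacle.} The only delicate point is checking that the union of a chain of congruences is a congruence in the paired or nd-semiring setting, in particular compatibility with multiplication when distributivity may fail. I would handle this by noting that every defining condition of a congruence, viewed as a subalgebra of $\mcA\times\mcA$ in the universal-algebra sense, involves finitely many elements. Closure under directed unions is therefore automatic. The rest of the argument is formal.
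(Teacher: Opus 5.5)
Your proof is correct. It is the standard argument: apply Zorn's lemma to the congruences that contain $\Cong$ and are disjoint from $S$ (unions of chains remain congruences because every defining condition is finitary), then deduce primeness from maximality together with the hypothesis on $S$. The paper gives no proof of its own here, only the citation to Lemma~4.12 of Row25, so there is no different route to compare against. One small point: the union argument needs a nonempty chain, so the empty chain should be bounded separately by $\Cong$ itself, which you already noted lies in $\mathcal F$.
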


\begin{prop}
Any semiprime  congruence $\Cong$ of a  semiring is the intersection of
prime congruences.\end{prop}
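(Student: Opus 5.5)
The plan is to imitate Levitzki's argument via $m$-systems, using Lemma~\ref{commpr} to do the Zorn-type step. Let $\Cong$ be semiprime. Since every prime congruence is semiprime and contains $\Cong$ only if we choose it so, the inclusion $\Cong\subseteq\bigcap\{\Cong' \text{ prime}: \Cong'\supseteq\Cong\}$ is trivial. The task is the reverse inclusion: given $\bfb_0=(b_1,b_2)\in\widehat{\mcA}\setminus\Cong$, we must produce a prime $\Cong'\supseteq\Cong$ with $\bfb_0\notin\Cong'$.

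To do this, we build a set $S\subseteq\widehat{\mcA}$ containing $\bfb_0$, disjoint from $\Cong$, and satisfying the hypothesis of Lemma~\ref{commpr}. We use a twisted analogue of Levitzki's $m$-sequence. For a pair $\bfb\notin\Cong$, let $\langle\bfb\rangle$ denote the congruence generated by $\Cong$ and $\bfb$. Since $\langle\bfb\rangle\supsetneq\Cong$, semiprimeness gives $\langle\bfb\rangle\ctw\langle\bfb\rangle\not\subseteq\Cong$. So we may choose $\bfb'\in\langle\bfb\rangle\ctw\langle\bfb\rangle\setminus\Cong$.

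Iterating this choice gives a sequence $\bfb_0,\bfb_1,\bfb_2,\dots$ with $\bfb_{k+1}\in\langle\bfb_k\rangle\ctw\langle\bfb_k\rangle\setminus\Cong$. Set $S=\{\bfb_k\}$. Because congruences are closed under the twist product with $\widehat{\mcA}$ on either side (they are subsemirings of $\widehat{\mcA}$ containing $\diag$), we get $\langle\bfb_{k+1}\rangle\subseteq\langle\bfb_k\rangle$.

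Next we check the hypothesis of Lemma~\ref{commpr}. Suppose congruences $\Cong_1,\Cong_2\supseteq\Cong$ meet $S$, say $\bfb_j\in\Cong_1$ and $\bfb_k\in\Cong_2$, and let $m=\max(j,k)$. Then $\bfb_m\in\Cong_1\cap\Cong_2$ by the nesting, so $\langle\bfb_m\rangle\subseteq\Cong_1\cap\Cong_2$. Hence $\bfb_{m+1}\in\langle\bfb_m\rangle\ctw\langle\bfb_m\rangle\subseteq\Cong_1\ctw\Cong_2$. Thus $S\cap(\Cong_1\ctw\Cong_2)\ne\emptyset$.

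Lemma~\ref{commpr} then yields a prime $\Cong'\supseteq\Cong$ disjoint from $S$, and in particular $\bfb_0\notin\Cong'$, which finishes the proof.

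The main obstacle is the closure claim $\langle\bfb\rangle\ctw\langle\bfb\rangle\subseteq\langle\bfb\rangle$, and more generally $\Cong_1\ctw\Cong_2\subseteq\Cong_1\cap\Cong_2$. This needs that a congruence of a semiring, viewed in $\widehat{\mcA}$, absorbs twist multiplication by arbitrary elements. The check is the computation
\[(b_1,b_2)\ctw(c,c)=(b_1c+b_2c,\,b_1c+b_2c)\]
together with the compatibility of $\Cong$ with multiplication and addition: from $(b_1,b_2)\in\Cong$ we get $(b_1c'_1+b_2c'_2,\ b_1c'_2+b_2c'_1)\in\Cong$ using $(b_2c'_2,b_2c'_2)$, $(b_1c'_2,b_2c'_2)$, and similar pairs combined by addition. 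I expect this to work precisely because $\mcA$ is assumed a semiring (distributivity), which is why the proposition is stated only for semirings. If it fails in that generality, I would replace $\Cong_1\ctw\Cong_2$ by the congruence it generates throughout, which is harmless for the argument.
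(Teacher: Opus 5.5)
Your proof is correct, and its skeleton is the paper's: build a Levitzki-type sequence $S$ outside $\Cong$, starting from a given element of $\widehat{\mcA}\setminus\Cong$, and apply Lemma~\ref{commpr} to it. The difference is the inductive step.

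The paper's sketch uses the classical $m$-sequence $s_{i+1}=s_i\ctw\bfb\ctw s_i\notin\Cong$. That needs an elementwise criterion for semiprimeness: if $s\ctw\bfb\ctw s\in\Cong$ for all $\bfb$, then $s\in\Cong$. This criterion is not obvious for congruences, because the congruence generated by $\Cong$ and $s$ involves transitive closure. The paper's version also needs associativity of $\ctw$ and an ideal-type property to verify the hypothesis of Lemma~\ref{commpr}, and the paper does not write that verification out.

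Your choice $\bfb_{k+1}\in\langle\bfb_k\rangle\ctw\langle\bfb_k\rangle\setminus\Cong$ comes directly from the congruence-level definition of semiprime. The only algebra it needs is $\Cong_1\ctw\Cong_2\subseteq\Cong_1\cap\Cong_2$, which you use both for the nesting and for checking the hypothesis of Lemma~\ref{commpr}.

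That inclusion holds for any congruence without distributivity. If $(b_1,b_2)\in\Cong$, multiply it and its symmetric $(b_2,b_1)$ componentwise by diagonal elements. This shows $(b_1c_1,b_2c_1)$ and $(b_2c_2,b_1c_2)$ lie in $\Cong$, and their sum is $(b_1,b_2)\ctw(c_1,c_2)$; the other side is handled the same way. These are the pairs to add, rather than the ones you listed. So your fallback of passing to generated congruences is unnecessary, and your argument goes through unchanged for nd-semirings.
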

\begin{proof}
  Following Levitski, define a set $S\subset\widehat{A},$   by the  following inductive method:

  For any
   $s_1 \in \widehat{A} \setminus \Cong,$ and given $s_i\in S$ take $\bfb \in \mcA$ and $s_{i+1}\in S$ such that $s_{i+1} \ctw \bfb \ctw s_{i+1} \notin \Cong.$ By Lemma~\ref{commpr}, we have some prime congruence disjoint $\Cong'$ from $S$. Hence the intersection of all such prime congruences misses all elements of~$\widehat{A} \setminus \Cong, $ so is $\Cong.$
\end{proof}

However, even in the idempotent commutative case, it is tricky to describe the  intersection of all
prime congruences, cf.~\cite[Example~3.3]{JM}.
Joo and Mincheva  prove that the prime congruence spectrum of $\Rmax[\la_1,\dots,\la_n]$ is catenary of length~$n.$
This result is extended  in \cite{Row25} to semiring pairs satisfying the property  $\one + ke = ke$ for some $k > 0$. Although this property fails for   classical pairs, it holds in Example~\ref{ex11a} (i), (ii), and (iii), all finite hyperpairs, and of course is preserved under polynomials and functions.
There is much room for further investigation.
\begin{ques}
    How far can one describe the prime spectrum? Is there an intrinsic description of the intersection of all prime congruences? Is there a ``weak Nullstellensatz'' describing semiprime congruences?
\end{ques}

 \section{Matrix pairs}

\begin{definition}\label{mapr}
    The \textbf{matrix pair} $M_n(\mcA,\mcA_0)$ of a semiring pair $(\mcA,\mcA_0)$ is $(M_n(\mcA),M_n(\mcA_0))$, with underlying partial monoid $\cup _{1\le i,j\le n}\tT e_{i,j}$.
\end{definition}

   \subsection{Singularity of matrices}\label{det1a}
$ $

    We call a matrix \textbf{tangible} if all of its entries are in $\tT.$
A \textbf{track} of an $n \times n$ matrix
$ A = (a_{i,j})\in M_n(\mcA)  $ is a product $a  _\pi := a_{ \pi(1),1 } \cdots a_{ \pi(n),n }$ for $\pi\in S_n,$ where either $A$ is tangible so that the tracks are in $\tTz$, or $\mcA$ is a semiring;
in either situation, the  following formulas 
make sense.

\begin{equation}\label{eq:tropicalDetsignsyma}
 {\Det A} _+ = \sum _  {\pi  \in S_n \text{ even }} a_{\pi}, \qquad {\Det A} _- =
 \sum _ {\pi  \in S_n \text{ odd }}   a_{\pi}.
\end{equation}

Also \eqref{eq:tropicalDetsignsyma} makes sense over semiring pairs.
This allows us to define singularity of matrices  as follows:

  \begin{definition}     The matrix $A$ is  \textbf{singular}\footnote{In \cite{AGR2} one defines a {\it balance relation} $\nabla$ and singularity with respect to $\nabla$, but this definition is more encompassing.}
  if ${\Det A} _+ = {\Det A} _-$. 
  \end{definition}



\subsection{$\dag$-determinants}$ $

Reutenauer and Straubing \cite{ReS},  and
Zeilberger \cite{zeilberger85} showed that a number of determinantal
identities of matrices admit bijective proofs, in the sense that the excess tracks pair off. This was the key idea in \cite[\S 6]{AGR2}, and is used again in the results given in this section and the next. The issue here is to avoid $(\one^\dag)^2,$ since it may not be $\one.$

Suppose  $\one + \one^\dag = e$ in the pair $(\mcA,\mcA_0)$. We define the \textbf{$\dag$-determinants} $|A|_\dag = {\Det A} _+ + {\Det A} _-^\dag$ and $_\dag |A|= {\Det A} _+^\dag + {\Det A} _-$.  Familiar results can be reformulated for pairs:

\begin{lem}\label{det1}
  For $A,B\in M_n(\mcA)$,\begin{enumerate}\eroman
    \item  $|AB|_\dag \succeq_0 |A|_\dag |B|_\dag $.    \item  $_\dag|AB| \succeq_0 {_\dag}|A| {_\dag}|B| $.
\end{enumerate}
\end{lem}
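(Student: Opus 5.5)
We expand both sides as sums of products of entries and compare term by term, Reutenauer--Straubing/Zeilberger style: the product of determinants sits inside the determinant of the product, and the leftover terms pair off into elements of the form $b^\circ\in\mcA_0$. Throughout, $\one^\dag$ is central (it lies in $\tT$), so $\dag$ commutes with multiplication. We never use $(\one^\dag)^2=\one$; the only parity rule needed is $(b^\dag)^\dag=(\one^\dag)^2 b$ for odd$\cdot$odd products. Since this need not equal $b$, part (i) must be organized so that no double $\dag$ appears, and this is the main obstacle.

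\textbf{Expanding the left side.} The tracks of $AB$ are
$$(AB)_\pi=\prod_{k}\sum_{j}a_{\pi(k),j}b_{j,k}=\sum_{\phi:[n]\to[n]}\prod_k a_{\pi(k),\phi(k)}b_{\phi(k),k}.$$
Hence $|AB|_\dag=\sum_{\pi,\phi}\epsilon(\pi)\prod_k a_{\pi(k),\phi(k)}b_{\phi(k),k}$, where $\epsilon(\pi)$ is $\one$ for even $\pi$ and $\one^\dag$ for odd $\pi$. For each fixed $\pi$ we split into the cases $\phi$ bijective and $\phi$ not bijective.

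\textbf{Bijective $\phi$.} Write $\phi=\sigma$. Then $\prod_k a_{\pi(k),\sigma(k)}=a_{\pi\sigma^{-1}}$ after reindexing the columns, and $\prod_k b_{\sigma(k),k}=b_\sigma$. As $(\pi,\sigma)$ ranges over $S_n^2$, the pair $(\tau,\sigma)=(\pi\sigma^{-1},\sigma)$ ranges over $S_n^2$ bijectively, and $\operatorname{sgn}\pi=\operatorname{sgn}\tau\operatorname{sgn}\sigma$. So these terms give
$$\sum_{\tau,\sigma}\epsilon(\tau\sigma)\,a_\tau b_\sigma .$$
The right side $|A|_\dag|B|_\dag=(|A|_++|A|_-^\dag)(|B|_++|B|_-^\dag)$, on the other hand, gives $\epsilon(\tau)\epsilon(\sigma)a_\tau b_\sigma$. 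The odd--odd terms differ: we get $a_\tau b_\sigma$ versus $(\one^\dag)^2a_\tau b_\sigma$. I would handle this by using Lemma~\ref{est}, $\one^\dag e=e$. In the intended reading the odd--odd products match once combined with the null contributions; failing that, one interprets $|A|_-^\dag|B|_-^\dag$ with a single $\dag$ applied, matching ${}_\dag|\cdot|$ in part (ii). Part (ii) is symmetric: there the odd--odd product $|A|_-|B|_-$ carries no $\dag$, and the even--even product carries $(\one^\dag)^2$. This is the one point to check carefully, and I would first try to show that $(\one^\dag)^2b - b$ is absorbed, using $b^\circ\in\mcA_0$ and $\one^\dag e=e$.

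\textbf{Non-bijective $\phi$.} These terms we pair off. Choose the lexicographically smallest pair $k<l$ with $\phi(k)=\phi(l)$, and let $t$ be the transposition $(k\,l)$. The map $(\pi,\phi)\mapsto(\pi t,\phi)$ is an involution without fixed points; it preserves $\phi$ and the chosen pair, and it flips the parity of $\pi$. Because $\phi(k)=\phi(l)$, the associated monomials are equal: swapping rows $\pi(k),\pi(l)$ while keeping the common column $\phi(k)$ only permutes the factors. So each pair contributes $c+c^\dag=c^\circ\in\mcA_0$ (Property~N(iii)), with $c$ the common monomial, and no double $\dag$ arises. Summing over pairs shows the non-bijective part is an element of $\mcA_0$.

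\textbf{Conclusion.} We obtain $|AB|_\dag=|A|_\dag|B|_\dag+z$ with $z\in\mcA_0$, which is exactly $|AB|_\dag\succeq_0|A|_\dag|B|_\dag$. Part (ii) is the same computation with $\epsilon$ replaced by its complement.
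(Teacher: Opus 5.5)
\textbf{Gap in part (i).} Your expansion of the tracks of $AB$ is correct, and so is your pairing of the non-bijective $\phi$ via $(\pi,\phi)\mapsto(\pi t,\phi)$. That pairing is exactly what the paper's one-line proof (``the excess tracks on the left side pair off'') refers to.

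The gap is in the bijective part, at the point you flag and then leave open. The odd--odd pairs $(\tau,\sigma)$ contribute $a_\tau b_\sigma$ to $|AB|_\dag$ but $(\one^\dag)^2a_\tau b_\sigma$ to $|A|_\dag|B|_\dag$. Neither $\one^\dag e=e$ nor $b^\circ\in\mcA_0$ can turn one into the other: there is no subtraction, and ``reinterpreting'' $|A|_-^\dag|B|_-^\dag$ changes the statement.

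In fact nothing can. Take $A=B$ to be the $2\times 2$ antidiagonal permutation matrix. Then $|A|_\dag=\one^\dag$ and $AB=I$, so (i) says precisely $\one\succeq_0(\one^\dag)^2$. This can fail. Let $\tT=\{\one,\omega,\omega^2\}$ be cyclic of order~$3$ and $\mcA=\{\zero,e\}\cup\{ma: m\ge 1,\ a\in\tT\}$, with
$ma+m'a=(m+m')a$, every other sum of nonzero elements equal to $e$, $(ma)(m'a')=(mm')aa'$, $eb=e$ for $b\ne\zero$, $\mcA_0=\{\zero,e\}$, and $\one^\dag=\omega$.
Property~N holds here with (ii) read as ``$a+a'\in\mcA_0$ implies $a+a'=ae$''. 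Yet $\one\notin\{\omega^2,e\}$, so $\one\not\succeq_0\omega^2$.

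So $(\one^\dag)^2=\one$ is a genuine hypothesis, not something to be absorbed. It does follow if Property~N(ii) is read literally as $a+a'\in\mcA_0\Rightarrow a'=a^\dag$: apply it to $\one^\dag+\one=e$. With that hypothesis $\epsilon$ is multiplicative, your bijective sum is exactly $|A|_\dag|B|_\dag$, and your proof of (i) is complete.

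\textbf{Gap in part (ii).} ``The same computation with $\epsilon$ replaced by its complement'' does not work, because you compared only the right-hand side.
In ${}_\dag|AB|$, the even--even and odd--odd pairs both get $\one^\dag$ (their product permutation is even), and the mixed pairs get $\one$.
In ${}_\dag|A|\,{}_\dag|B|$, these same classes get $(\one^\dag)^2$, $\one$ and $\one^\dag$ respectively.
So unless $\one^\dag=\one$, every parity class is off, by a factor $\one^\dag$ when $(\one^\dag)^2=\one$.

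Indeed (ii) as printed is false. For the classical pair over $\Z$ one has ${}_\dag|A|=-\det A$ and $\succeq_0$ is equality, so $A=B=I$ gives $-1$ versus $1$.

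What your method does prove, given $(\one^\dag)^2=\one$, is ${}_\dag|AB|\succeq_0|A|_\dag\,{}_\dag|B|$. Write $\bar\epsilon$ for the complementary sign ($\one^\dag$ on even, $\one$ on odd). Then $\bar\epsilon(\tau\sigma)=\epsilon(\tau)\bar\epsilon(\sigma)$, and the non-bijective terms pair off into elements $c^\circ\in\mcA_0$ exactly as before.
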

\begin{proof}
    The excess tracks on the left side pair off, contributing elements of $\mcA_0.$
\end{proof}
  \cite[Theorem~E]{AGR2} is generalized here to:

\begin{theorem}[Cayley-Hamilton theorem]\label{theoremG}
 For $A \in M_n(\mcA),$ viewed in $\widehat{\mcA}^+$ as in Remark~\ref{doubb} define the \textbf{characteristic polynomial} $f_A(\lambda) =|(\zero,\lambda I) + (A,\zero)|_\dag.$ Then $f(A)\in \widehat{\mcA}_0.$
\end{theorem}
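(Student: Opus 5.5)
We plan to prove this with the bijective (Straubing/Zeilberger) approach to Cayley--Hamilton, carried out in the doubled semiring $\widehat{\mcA}$. There the negation map $(b_1,b_2)\mapsto(b_2,b_1)$ squares to the identity, so $\dag$ behaves like $(-)$, and every element of the form $c+c^\dag=(c_1+c_2,c_1+c_2)$ lies in the diagonal, which is the null part $\widehat{\mcA}_0$. So the aim is to show that, after expansion, the terms of $f_A(A)$ cancel in pairs, each pair being of the form $c+c^\dag$.

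First we expand the characteristic polynomial. For $\pi\in S_n$, the track of $(\zero,\lambda I)+(A,\zero)$ along $\pi$ is obtained by choosing, for each fixed point $i$ of $\pi$, either the diagonal entry $a_{i,i}$ or the term $\lambda$ (the latter in the negative component). Non-fixed points must contribute $a_{\pi(i),i}$. Thus
\[
f_A(\lambda)=\sum_{k}\lambda^{n-k}\sum_{(S,\sigma)} \epsilon(S,\sigma)\, a_\sigma ,
\]
where $S$ ranges over $k$-subsets of $\{1,\dots,n\}$, $\sigma$ over permutations of $S$, $a_\sigma=\prod_{i\in S}a_{\sigma(i),i}$, and $\epsilon$ records, through the twist multiplication, whether the term sits in the positive or the negative component. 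The sign is determined by $\mathrm{sgn}(\sigma)$ and the parity of $n-k$, and the extra $\dag$ in $|\cdot|_\dag$ is applied to odd permutations. In $\widehat{\mcA}$ this is just the usual sign rule, with no $(\one^\dag)^2\ne\one$ issue, because $\dag$ is an involution there.

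Next we substitute $A$ into $f_A$. The $(i,j)$ entry of $f_A(A)$ is a sum, with signs, of terms $a_\sigma\,(A^{n-k})_{i,j}$. Expanding $(A^{n-k})_{i,j}$ as a sum over walks of length $n-k$ from $i$ to $j$, each term becomes a pair (cycle cover $\sigma$ of the subset $S$, walk $w$ from $i$ to $j$ of length $n-k$), weighted by the product of the corresponding entries. Total length is $n$, so by pigeonhole some vertex repeats in the combined data. This is Straubing's setting: the involution finds the first point where either the walk $w$ closes a cycle, or $w$ meets a vertex of $S$. In the first case the involution moves that cycle from $w$ into $\sigma$; in the second it moves the cycle of $\sigma$ through that vertex into $w$. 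The involution has no fixed points, preserves the multiset of entries and hence the weight, and changes $|S|$ by the length of one cycle. So it flips the sign: the cycle sign $(-1)^{\ell-1}$ combines with the parity change of $n-k$ to give $-1$.

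Consequently the terms of each entry pair off as $c$ and $c^\dag$, and each pair sums to an element of the diagonal, i.e.\ of $\widehat{\mcA}_0$. Since $\widehat{\mcA}_0$ is closed under addition, every entry of $f_A(A)$ lies in $\widehat{\mcA}_0$, which is what the theorem claims entrywise. The same pairing argument already underlies Lemma~\ref{det1}, so we can follow that model.

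I expect the main obstacle to be the sign bookkeeping. We must check that "positive" versus "negative" component, as produced by twist multiplication of $(\zero,\lambda)$ factors and the $\dag$ on $|\cdot|_-$, really matches the classical sign $(-1)^{n-k}\mathrm{sgn}(\sigma)$. We must also check that the first-occurrence rule is a genuine involution, i.e.\ applying it twice returns the original pair. I would verify the sign compatibility first on cycles of length $1$ and $2$, and then argue multiplicatively.
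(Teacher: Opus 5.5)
Your proposal is correct and takes essentially the paper's route. The paper passes to the doubled pair $(\widehat{\mcA},\widehat{\mcA}_0)$, where $\dag$ is the involutive switch map, and then invokes \cite[Theorem~E]{AGR2}; that theorem rests on the same Straubing/Zeilberger pairing of excess terms that you carry out explicitly, and your involution and sign check $(-1)^{\ell}(-1)^{\ell-1}=-1$ are right, though the weight-preservation step (like the cited theorem) tacitly uses commutativity of $\mcA$.
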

\begin{proof}
    By \cite[Theorem~E]{AGR2}, applied to the doubled pair
$(\hat \mcA,\hat \mcA_0).$
\end{proof}

Also applying  Proposition~\ref{sha} to  \cite[Theorem~E]{AGR2} yields

    \begin{cor}\label{HCa}
        The characteristic polynomial $|\la I (-) A|$ of a tangible matrix~$A$ over a shallow paired domain with unique negation cannot have more than $n$ factor-roots.
    \end{cor}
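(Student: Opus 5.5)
The plan is to derive the corollary as a direct instance of Proposition~\ref{sha}, applied to the characteristic polynomial $f_A(\la)=|\la I (-) A|$. First I will check that $f_A$ is a monic polynomial of degree $n$ over $\mcA$. Expanding the determinant by tracks, the only track reaching degree $n$ in $\la$ is the identity permutation, namely $\prod_i(\la(-)a_{i,i})$. Its leading term is $\la^n$ with coefficient $\one$. Every other track has degree at most $n-2$ in $\la$. So $f_A$ is monic of degree $n$, whatever the lower coefficients are (they may lie outside $\tT$, but Proposition~\ref{sha} only asks for monic $f$).

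Next I will verify the hypotheses of Proposition~\ref{sha} for the pair.
\begin{enumerate}\eroman
\item The pair is a paired domain. It is a shallow paired domain by assumption; in fact any shallow semiring pair is a strongly paired domain by the lemma preceding Example~\ref{ex11a}.
\item The shallowness alternative in Proposition~\ref{sha} holds by assumption.
\item We need a $\tT$-reversible surpassing relation satisfying fissure. Unique negation gives a negation map $(-)$, so $|\la I(-)A|$ makes sense. For the relation I will take $\preceq_0$. Since the pair is shallow and uniquely negated, $a_1\preceq_0 a_2$ with $a_i\in\tT$ forces $a_2=a_1+c$ with $c\in\mcA_0$, and shallowness lets us conclude $c$ is absorbed, so $a_1=a_2$. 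Fissure for $\preceq_0$ on a shallow pair should follow by splitting a sum $a_1+\sum_{i\ge2}a_i$. Since $\sum_{i\ge2}a_i$ lies in $\tT\cup\mcA_0$, take $a$ to be it when it is tangible and $\zero$ otherwise. Lemma~\ref{fis} then gives strong $\tT$-reversibility, and Lemma~\ref{rever1} gives $\tT$-reversibility.
\end{enumerate}

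With these in place, Proposition~\ref{sha} says $f_A$ has at most $n$ distinct tangible factor-roots, which is the statement.

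The main obstacle is step (iii): showing that fissure (equivalently, strong reversibility) holds for the chosen surpassing relation on an arbitrary shallow uniquely negated paired domain. If that fails in general, the fallback is to read the corollary as implicitly assuming a surpassing relation with the hypotheses of Proposition~\ref{sha}, as in hyperpairs (Lemma~\ref{metreva}). I would also cite \cite[Theorem~E]{AGR2}, as the paper indicates, to identify $|\la I(-)A|$ with the characteristic polynomial $f_A$ of Theorem~\ref{theoremG}.
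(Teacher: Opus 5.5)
Your route is the paper's: the corollary is Proposition~\ref{sha} applied to the characteristic polynomial of \cite[Theorem~E]{AGR2}. Your check that $f_A$ is monic of degree $n$ is exactly what that application needs: only the identity track reaches $\la^n$, and every other track has degree at most $n-2$. The trouble is step (iii), where you try to build the relation required by Proposition~\ref{sha} from the stated hypotheses alone. As written it does not work, for three reasons.

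First, factor-roots (Definition~\ref{prec2}) are defined relative to the pair's given pre-surpassing relation $\preceq$, so you may not switch to $\preceq_0$. Every pre-surpassing relation contains $\preceq_0$: if $b_2=b_1+c$ with $c\in\mcA_0$, then $\zero\preceq c$ gives $b_1\preceq b_2$. Hence every $\preceq_0$-factor-root is a $\preceq$-factor-root. Bounding the $\preceq_0$-factor-roots therefore does not bound the $\preceq$-factor-roots when $\preceq$ is coarser, e.g.\ $\subseteq$ for hyperpairs.

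Second, the reason $a_1\preceq_0 a_2$ forces $a_1=a_2$ on $\tT$ is Property~N(ii), not shallowness. Indeed $a_2+a_1^\dag=a_1^\circ+c\in\mcA_0$ gives $a_1^\dag=a_2^\dag$, and then $a_1=a_2$ because $\one^\dag$ is invertible.

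Third, your fissure recipe already fails for the supertropical pair $(\mathbb T(\tG;\tG),\tG)$ over the max-plus reals. Take tangibles $a_0,a_1$ and $a_2=a_3$ with $\mu(a_0)<\mu(a_1)<\mu(a_2)$. Then $a_1+a_2+a_3=\mu(a_2)=a_0+\mu(a_2)$, so $a_0\preceq_0\sum_{i\ge1}a_i$. Since $a_2+a_3\in\mcA_0$, your recipe takes $a=\zero$. But $a_0\not\preceq_0 a_1$, because for every $c\in\tG$ the sum $a_0+c$ is either $a_0$ or lies in $\tG$. Fissure does hold there, with $a=a_1$, but your argument does not produce it, and you give no general argument from shallowness.

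So discard (iii) and use your fallback. The corollary tacitly keeps the hypotheses of Proposition~\ref{sha}: the given $\preceq$ is a $\tT$-reversible surpassing relation satisfying fissure. Under that reading it is literally Proposition~\ref{sha} applied to $f_A$, which is all the paper's one-line derivation does.
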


\subsection{$\dag$-adjoints}

 \begin{definition}
    \label{adjo}   Write $A_{i,j}$ for
  the $(j,i)$ minor of a matrix $A$. Define $a_{i,j}'$ as $|A_{i,j}|_\dag$ for $i+j$ even, and $_\dag|A_{i,j}|$ for $i+j$ odd. The
\textbf{$\dag$-adjoint} matrix $\adj _\dag A$ is the matrix $(a_{i,j}')$.
 \end{definition}

 We reformulate \cite[Theorem~F]{AGR2}.

\begin{theorem} [Generalized Laplace identity and Cauchy-Binet formula]
\label{theoremH} $A = (a_{ij})$ satisfies
Laplace's well-known identity $$|A| = \sum _{j=1}^n
a'_{i,j}a_{i,j},$$ for any  $i$.

For the more general  Cauchy-Binet formula, fix $I = \{ i_1, \dots, i_m \}\subset \{1, 2, \dots ,
n\}.$  For any  $J = \{ j_1, \dots, j_m\}\subset \{1, 2, \dots , n\}$,    write $A_{I,J}$ for the  $(n-m)\times (n-m)$
minor obtained by deleting all rows from $I$ and all columns from~$J$. Take $a_{I,J}'$ for $|A_{I,J}|_\dag $     when $(i_1 +\dots + i_m)(j_1 +\dots + j_m)$ is even, and for $_\dag|A_{I,J}| $     when $(i_1 +\dots + i_m)(j_1 +\dots + j_m)$ odd. Then \begin{align}\label{e-Lap1}|A| = \sum _{J : |J|= m}  a'_{I,J}|a_{I,
J}|.\end{align}
\end{theorem}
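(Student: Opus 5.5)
The plan is a bijective bookkeeping argument in the style of Reutenauer--Straubing and Zeilberger, as in the proof of Lemma~\ref{det1}. The difference is that here I expect the tracks to match \emph{exactly}, with no excess tracks pairing off into $\mcA_0$. Throughout, $|A|$ is read as the $\dag$-determinant $|A|_\dag={\Det A}_+ + {\Det A}_-^\dag$, which is the only reading under which both sides are built from the same tracks with $\dag$-markings.

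\textbf{Laplace identity.} Fix the row $i$. For each $\pi\in S_n$ there is a unique column $j$ whose factor in the track $a_\pi$ is $a_{i,j}$, namely the $j$ with $\pi(j)=i$. Deleting row $i$ and column $j$ turns $\pi$ into a bijection $\pi'$ between the remaining indices. Under this correspondence, the tracks of $A_{i,j}$ (up to the index transposition built into the convention $A_{i,j}=(j,i)$ minor, which I will track explicitly) are exactly the products $a_\pi/a_{i,j}$ for the $\pi$ with $\pi(j)=i$. Hence $\pi\mapsto(j,\pi')$ is a bijection
\[
S_n \;\longrightarrow\; \{(j,\pi'):\ 1\le j\le n,\ \pi' \text{ a track of the minor}\},
\]
and $a_\pi = a_{i,j}\,a_{\pi'}$ as elements of $\tTz$ (or of $\mcA$).

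The sign rule $\operatorname{sgn}\pi=(-1)^{i+j}\operatorname{sgn}\pi'$ then does the rest. If $i+j$ is even, $\pi$ is even exactly when $\pi'$ is even. The convention $a'_{i,j}=|A_{i,j}|_\dag$ therefore puts an undaggered track on the right side exactly when $a_\pi$ lies in ${\Det A}_+$, and a daggered one exactly when $a_\pi$ lies in ${\Det A}_-$. If $i+j$ is odd, parity flips, and the convention $a'_{i,j}={_\dag}|A_{i,j}|$ flips the $\dag$ accordingly. Since $\one^\dag$ is central, multiplying by $a_{i,j}\in\tT$ moves $\dag$ past it. Summing over $j$ then gives each term of ${\Det A}_+ + {\Det A}_-^\dag$ exactly once, which is an equality, not merely $\succeq_0$.

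\textbf{Cauchy--Binet (generalized Laplace).} With $I$ fixed, every $\pi\in S_n$ determines $J=\pi^{-1}(I)$. It then splits uniquely into a bijection $\sigma:J\to I$ and a bijection $\tau$ on the complements, with $a_\pi=a_\sigma a_\tau$. This gives a bijection between $S_n$ and the triples $(J,\sigma,\tau)$, and $\operatorname{sgn}\pi=(-1)^{\sum I+\sum J}\operatorname{sgn}\sigma\operatorname{sgn}\tau$. The new difficulty is that now both factors carry signs. Writing $|a_{I,J}|$ as a $\dag$-determinant, some terms of the right side would contain $(\one^\dag)^2$, which the paper explicitly wants to avoid.

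To handle this, I would carry out the product inside the doubled pair $(\widehat\mcA,\widehat\mcA_0)$. There the $m\times m$ minor is recorded as $({\Det{a_{I,J}}}_+,{\Det{a_{I,J}}}_-)$ and multiplied using the twist multiplication \eqref{twis}. The twist product implements exactly "even$\cdot$even and odd$\cdot$odd go to the even slot; mixed products go to the odd slot." So the total track sum in each slot is precisely ${\Det A}_\pm$, up to the global parity factor $(-1)^{\sum I+\sum J}$. That factor is what the choice between $|A_{I,J}|_\dag$ and ${_\dag}|A_{I,J}|$ is meant to absorb. Finally, applying the map $(b_1,b_2)\mapsto b_1+b_2^\dag$ introduces exactly one $\dag$ per odd track, yielding $|A|_\dag$.

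\textbf{Main obstacle.} I expect the hard step to be this sign bookkeeping: checking that the stated parity rule on $(i_1+\dots+i_m)(j_1+\dots+j_m)$ matches $(-1)^{\sum I+\sum J}$ under the index transposition in the definition of $A_{I,J}$. I would first verify it on $m=1$, which must reduce to the Laplace case, and then argue through the twist product. If the two parity rules disagree, I would fall back on the hypothesis of a genuine negation map $((-)\one)^2=\one$. That hypothesis makes the doubled-pair detour unnecessary and reduces the identity to the classical sign computation.
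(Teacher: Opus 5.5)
Your Laplace argument is correct, and it is the track-regrouping that the paper's ``mutatis mutandis'' refers to. The bijection $\pi\mapsto(j,\pi')$ with $\pi(j)=i$ and $\operatorname{sgn}\pi=(-1)^{i+j}\operatorname{sgn}\pi'$ puts exactly one $\dag$ on each odd track, so you get equality rather than just $\succeq_0$. You are right to expand against the minor deleting row $i$ and column $j$, i.e.\ to prove $|A|_\dag=\sum_j a_{i,j}a'_{j,i}$. With $A_{i,j}$ defined as the $(j,i)$ minor, the literal $\sum_j a'_{i,j}a_{i,j}$ is not an expansion: for $n=2$, $i=1$ it gives $a_{1,1}a_{2,2}+a_{1,2}^\dag a_{1,2}$.

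The Cauchy--Binet part has two genuine gaps. First, the parity check you postpone fails immediately. For $m=1$ and $I=J=\{1\}$, the product $1\cdot 1$ is odd, whereas $\sum I+\sum J=2$ is even and the Laplace case requires $|A_{1,1}|_\dag$. The rule has to be the parity of $(i_1+\dots+i_m)+(j_1+\dots+j_m)$. Your fallback to $((-)\one)^2=\one$ cannot repair this: in a commutative ring with $\one^\dag=-1$, taking $n=2$ and $I=\{1\}$, the stated product rule returns $-\det A$.

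Second, the doubled-pair detour does not deliver the displayed formula. The map $\phi(b_1,b_2)=b_1+b_2^\dag$ is additive. But $\phi(x\ctw y)=x_+y_++x_-y_-+(x_+y_-+x_-y_+)^\dag$, while $\phi(x)\phi(y)$ has $(\one^\dag)^2x_-y_-$ in place of $x_-y_-$. So you cannot pass from the identity in $\widehat{\mcA}$ to a product $a'_{I,J}|a_{I,J}|$ of two elements of $\mcA$. Your sentence ``that factor is what the choice between $|A_{I,J}|_\dag$ and ${}_\dag|A_{I,J}|$ absorbs'' is exactly where the argument breaks.

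In fact the displayed formula is false for $m\ge 2$ whenever $(\one^\dag)^2\ne\one$, which the paper explicitly allows. Take $n=4$, $I=\{1,2\}$, and $A$ whose only nonzero entries are $a_{1,2},a_{2,1},a_{3,4},a_{4,3}\in\tT$. Then $|A|_\dag=a_\pi$ with $\pi=(12)(34)$ even, and only $J=\{1,2\}$ contributes. Both blocks are single odd tracks. Reading $|a_{I,J}|$ as $|a_{I,J}|_\dag$, as you do, the right side is $(\one^\dag)^2a_\pi$ with the sum rule, and $\one^\dag a_\pi$ with the product rule. So the paper's remark that ``$\dag$ does not repeat'' is valid only for $m=1$.

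What your bijection $\pi\leftrightarrow(J,\sigma,\tau)$ actually proves, after unwinding $\phi$ of the twist product, is
\[
|A|_\dag=\sum_{J:\,|J|=m}\Bigl({\Det{a_{I,J}}}_+\,a'_{I,J}+{\Det{a_{I,J}}}_-\,a''_{I,J}\Bigr).
\]
Here $a'_{I,J}$ is chosen by the parity of $\sum I+\sum J$, and $a''_{I,J}$ is the other $\dag$-determinant of $A_{I,J}$. This form genuinely never repeats $\dag$. It collapses to the displayed product form only under $(\one^\dag)^2=\one$.
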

\begin{proof}
    The proof goes over mutatis  mutandis, since the formula was arranged so that $\dag$ does not repeat.
\end{proof}

\subsection{Eigenvalues of matrices} $ $

We define eigenvalues by means of the surpassing relation $\preceq.$
\begin{definition}
    An \textbf{eigenvector} of a  matrix $A$ is  a vector $\bfv$ such that $A\bfv \succeq \alpha \bfb$; and $\a$ is called the corresponding  \textbf{eigenvalue}.
\end{definition}

\begin{lem}
    If $\bfv_j$ are eigenvectors of $A$ with eigenvalues $\a_j,$ and $B$ is the matrix whose $j$ column is $\bfv_j,$ then $AB \succeq BD,$ where $D = \diag \{ \a_1, \dots, \a_n\}.$
\end{lem}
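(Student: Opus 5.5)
The proof goes column by column, using only the two axioms of a pre-order on a $\tT$-module together with the entrywise extension of $\preceq$ to vectors and matrices. I read the definition of eigenvector as $A\bfv \succeq \a \bfv$, correcting the evident typo $\bfb$ for $\bfv$, with $\a\in\tT$ or at least acting as a scalar. I interpret $\preceq$ on $M_n(\mcA)$ entrywise, as the paper does for function and polynomial pairs (Example~\ref{ex11a}(vi)). Since $\mcA$ is a semiring, matrix multiplication is defined by the usual formulas.

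First I identify the columns of both sides. The $j$-th column of $AB$ is $A\bfv_j$, because $(AB)_{i,j} = \sum_k a_{i,k} (\bfv_j)_k$. The $j$-th column of $BD$ is $\bfv_j \a_j$, because $D$ is diagonal. Concretely,
$$(BD)_{i,j} = \sum_k b_{i,k} d_{k,j} = b_{i,j}\a_j ,$$
all other terms vanishing because $\zero$ is absorbing. Since the copy of $\tT$ in $\mcA$ is central, and in any case I only need $\a_j$ acting as a scalar, this column equals $\a_j \bfv_j$.

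Next, the eigenvector hypothesis gives $A\bfv_j \succeq \a_j\bfv_j$ for each $j$. Entrywise this reads $(AB)_{i,j} \succeq (BD)_{i,j}$ for all $i,j$. By the definition of $\preceq$ on matrices, this is exactly $AB \succeq BD$.

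The only delicate point is commuting the scalar, i.e. $b_{i,j}\a_j = \a_j b_{i,j}$. If the eigenvalues are tangible, this is the centrality assumption on $\tT$ stated for $\tTz$-semirings. If the $\a_j$ are arbitrary elements of a noncommutative $\mcA$, I would instead write the eigenvector condition as $A\bfv \succeq \bfv\a$, and then the computation goes through verbatim with no commutation needed. I expect no further obstacle; in particular, the compatibility of $\preceq$ with addition (axiom (b)) is not even needed, because both sides are compared entry by entry without further summation.
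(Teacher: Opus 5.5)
Your proposal is correct and is essentially the paper's argument: the paper's proof is simply ``By definition of eigenvector,'' and you carry that out by identifying the $j$-th columns of $AB$ and $BD$ as $A\bfv_j$ and $\bfv_j\a_j=\a_j\bfv_j$ and comparing entrywise. Reading $\bfb$ as $\bfv$ and using the centrality of $\tT$ to commute $\a_j$ past the entries of $\bfv_j$ are appropriate clarifications, not a different route.
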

\begin{proof}
    By definition of eigenvector.
\end{proof}

Let us bring in
Theorem~\ref{theoremG}.

\begin{theorem}
    The eigenvalues of $A$ are the null roots of the characteristic polynomial $f_A(\lambda).$ If  these null roots are distinct, then $AB \succeq BD,$ where the columns of $B$ are $\mcA_0$-independent.
\end{theorem}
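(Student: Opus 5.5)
The plan is to treat the two assertions separately, taking the eigenvalue relation in the intended form $A\bfv \succeq \a\bfv$.

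\textbf{First assertion.} Suppose $A\bfv \succeq \a \bfv$ with $\a\in\tT$. By Lemma~\ref{rev1}, applied coordinatewise, this gives
$$(\a I)^\dag\bfv + A\bfv \succeq (\a\bfv)^\circ \succeq \zero,$$
so $(A+(\a I)^\dag)\bfv \succeq \zero$. In other words, $\bfv$ is a null vector of $\a I (-) A$; working in the doubled pair, this is the matrix $(\zero,\a I)+(A,\zero)$.

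Next I would multiply on the left by $\adj_\dag(\a I(-)A)$. Theorem~\ref{theoremH} (Laplace), together with the bijective pairing of tracks used in Lemma~\ref{det1}, gives
$$\adj_\dag(\a I(-)A)\,(\a I(-)A) \succeq_0 f_A(\a) I .$$
Combining the two displays, $f_A(\a)\bfv$ surpasses an element of $\mcA_0^{(n)}$. If some coordinate $v_k$ is tangible, the converse condition in Definition~\ref{symsyst} forces $f_A(\a)\in\mcA_0$, so $\a$ is a null root. For a non-tangible $\bfv$ I would need a paired-domain hypothesis. That is an assumption I would make explicit, as in Proposition~\ref{sha}.

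For the converse, suppose $f_A(\a)\in \mcA_0$. I would take $\bfv$ to be any column of $\adj_\dag(\a I(-)A)$. The reverse Laplace identity gives $(\a I(-)A)\,\adj_\dag \succeq_0 f_A(\a) I$, so $(\a I (-) A)\bfv \in \mcA_0^{(n)}$. Reversing via $\tT$-reversibility (Lemma~\ref{metrev}/Lemma~\ref{rever1}), this yields $A\bfv\succeq \a\bfv$. A nonzero column exists unless all $(n-1)$-minors are null, which is a degenerate case to be excluded.

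\textbf{Second assertion.} Choose such eigenvectors $\bfv_j$ for the distinct null roots $\a_j$. The preceding lemma then gives $AB\succeq BD$ immediately. For $\mcA_0$-independence, assume $\sum_j c_j\bfv_j\in\mcA_0^{(n)}$ with tangible $c_j$ not all zero, and take such a relation with minimal support. Apply $A$ and use $A\bfv_j\succeq \a_j\bfv_j$ to get $\sum c_j\a_j\bfv_j \preceq$ (something null). Subtract $\a_1$ times the relation, i.e. add $(\a_1\sum c_j \bfv_j)^\dag$. The result is $\sum_{j\ge2} c_j(\a_j(-)\a_1)\bfv_j$, which is null. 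Each $\a_j(-)\a_1$ is not in $\mcA_0$ by uniqueness of $e$ (Property~N(ii)) and distinctness. This gives a shorter relation, contradicting minimality, in the usual Vandermonde-style induction.

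\textbf{Main obstacle.} The hardest step is this last one: the coefficients $c_j(\a_j(-)\a_1)$ need not be tangible. I would first try to restrict to a metatangible or shallow paired domain, where $\a_j(-)\a_1\in\tT$ whenever it is not null, so that minimality applies. Failing that, I would replace the induction by the Cauchy–Binet formula of Theorem~\ref{theoremH}: show that $|B|_\dag\notin\mcA_0$ via a Vandermonde-type factorization of the $\dag$-determinant of $B$ up to $\succeq_0$.
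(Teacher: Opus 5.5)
The paper states this theorem without proof. It only points to Theorem~\ref{theoremG} and to the preceding lemma, which you also use, correctly, for $AB\succeq BD$. So everything rests on your argument, and its key step in the forward direction runs the surpassing relation the wrong way. With $M=\a I(-)A$, the relation $\adj_\dag(M)M\succeq_0 f_A(\a)I$ says $\adj_\dag(M)M=f_A(\a)I+N$ with $N$ null. From $M\bfv$ null you therefore only learn that $f_A(\a)\bfv+N\bfv$ is null, i.e.\ that a null vector surpasses $f_A(\a)\bfv$. You do not learn, as you assert, that $f_A(\a)\bfv$ surpasses a null vector. Null summands cannot be cancelled: in the supertropical pair $(\mathbb T(\tT;\tG),\tG)$ one has $x+g=g\in\tG$ for all $x\in\tT$ and $g\in\tG$ with $\mu(x)\le g$. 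The Cayley--Hamilton route suggested by the paper meets the same obstruction, since $f_A(A)\bfv$ is null and merely surpasses $f_A(\a)\bfv$.

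Already your first step quietly identifies ``$\succeq\zero$'' with ``null''. Multiplying by the non-tangible matrix $\adj_\dag(M)$ also assumes that $\preceq$ is compatible with multiplication by arbitrary elements. Neither is an axiom of a pre-surpassing relation.

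What this direction actually requires is that an $n\times n$ matrix whose columns are $\mcV_0$-dependent has null $\dag$-determinant. Dependence means tangible coefficients, so you also need the entries of $\bfv$ to lie in $\tTz$, not all zero. That statement follows from the square case of Condition~\A{1}, which by Remark~\ref{imps} fails in general. It must be assumed, or proved for the class of pairs at hand, e.g.\ metatangible uniquely negated pairs or quotient hyperfields.

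The same illegitimate step recurs elsewhere: passing from ``surpassed by a null element'' or ``null after adding a null term'' to ``null''.

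In the independence argument, $A\sum_j c_j\bfv_j$ being null and surpassing $\sum_j c_j\a_j\bfv_j$ says nothing about the latter. Even granting that $c_1e\bfv_1+\sum_{j\ge 2}c_j(\a_j(-)\a_1)\bfv_j$ is null, you could not discard $c_1e\bfv_1$. So the Vandermonde induction fails before the tangibility problem you flagged, and the Cauchy--Binet alternative is only a hope.

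In the converse, the adjoint column does give $M\bfv\in\mcA_0^{(n)}$; here the direction is right. But the entries of $\bfv$ are $\dag$-determinants of minors, typically non-tangible and possibly null, so $\bfv$ need not be a genuine eigenvector. Moreover, strong $\tT$-reversibility reverses only tangible summands, so it cannot convert $A\bfv+(\a\bfv)^\dag\in\mcA_0^{(n)}$ into $A\bfv\succeq\a\bfv$. Supertropically, $x\in\tT$ and $y\in\tG$ with $\mu(x)<y$ give $x+y\in\tG$ although $x\not\succeq_0 y$. Here you need a nonzero vector with entries in $\tTz$ sent into $\mcA_0^{(n)}$ by a matrix with null $\dag$-determinant. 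That is an \A{3}-type property which, again by Remark~\ref{imps}, holds only in special cases.

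So both halves of the first assertion, and the independence claim, need hypotheses of this kind added explicitly before any argument along your lines can go through.
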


This coordinates with Corollary~\ref{HCa}, when factor-roots are null roots, and gives us a technique for diagonalization.


\medskip  \subsection{Matrix semigroups }$ $

 The next natural step is to define the analogs of the classical linear algebraic groups. This is largely open, but some work   done on $\SL_n$ \cite{INR} can be done in this context.

 \begin{definition}
 $\SL(n,(\mcA,\mcA_0)) =   \{A\in M_n(\mcA): {\Det A} _\dag \succeq \one\} $.
 \end{definition}

 \begin{lem}
      $\SL(n,(\mcA,\mcA_0)) $ is a monoid.
 \end{lem}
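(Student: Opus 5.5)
To show that $\SL(n,(\mcA,\mcA_0))$ is a monoid, I need two things: that it contains the identity matrix $I$, and that it is closed under multiplication. Associativity is inherited from $M_n(\mcA)$.

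\textbf{Identity.} For $I$, the only nonzero track is the one for the identity permutation, which is even and equals $\one$. Hence ${\Det I}_+ = \one$ and ${\Det I}_- = \zero$, so ${\Det I}_\dag = \one + \zero^\dag = \one$. Since $\preceq$ is a pre-order, $\one \succeq \one$, and therefore $I \in \SL(n,(\mcA,\mcA_0))$.

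\textbf{Closure.} Take $A, B$ with ${\Det A}_\dag \succeq \one$ and ${\Det B}_\dag \succeq \one$. The plan is to chain three comparisons.
\begin{enumerate}
\item Lemma~\ref{det1}(i) gives ${\Det {AB}}_\dag \succeq_0 {\Det A}_\dag {\Det B}_\dag$.
\item The relation $\preceq_0$ is contained in any pre-surpassing relation $\preceq$. Indeed, if $b_1 + c = b_2$ with $c \in \mcA_0$, then $\zero \preceq c$, and adding $b_1$ to both sides (compatibility with addition) gives $b_1 \preceq b_1 + c = b_2$. So the first step upgrades to ${\Det {AB}}_\dag \succeq {\Det A}_\dag {\Det B}_\dag$.
\item It remains to show ${\Det A}_\dag {\Det B}_\dag \succeq \one \cdot \one = \one$, that is, that $\preceq$ is compatible with multiplication in the semiring $\mcA$.
\end{enumerate}
Transitivity of the pre-order then finishes the argument.

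\textbf{Main obstacle: multiplicativity.} Step (iii) is the delicate point, because the pre-order axioms only guarantee compatibility with multiplication by elements of $\tT$, not by arbitrary elements of $\mcA$. For $\preceq_0$ it holds directly: if $\one + c_1 = {\Det A}_\dag$ and $\one + c_2 = {\Det B}_\dag$ with $c_i \in \mcA_0$, expanding gives ${\Det A}_\dag {\Det B}_\dag = \one + (c_1 + c_2 + c_1 c_2)$. The bracketed term lies in $\mcA_0$ provided $\mcA_0$ is an ideal, which is the situation for semiring pairs with $\mcA_0$ closed under multiplication by $\mcA$.

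This is exactly the computation already done for the $\preceq$-unit monoid $\mcA^{\preceq\times}$. So I would cite that lemma: both determinants lie in $\mcA^{\preceq\times}$, hence so does their product.

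For a general $\preceq$, I would either read the definition of $\SL$ with $\preceq$ taken to be $\preceq_0$, or add the standing assumption that $\preceq$ respects multiplication in the semiring pair. Under either reading, $\one \preceq {\Det A}_\dag{\Det B}_\dag \preceq {\Det{AB}}_\dag$, so $AB \in \SL(n,(\mcA,\mcA_0))$, which completes the proof.
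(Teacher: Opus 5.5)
Your proposal is correct and follows the paper's own proof, which is the one-line chain ${\Det{AB}}_\dag \succeq {\Det A}_\dag {\Det B}_\dag \succeq \one$ obtained from Lemma~\ref{det1}(i). The paper's chain silently uses three facts that you make explicit: that $I$ lies in the set, that $\preceq_0$ is contained in every pre-surpassing relation, and that a product of two elements surpassing $\one$ again surpasses $\one$; you rightly note that the last of these needs $\mcA_0$ to be multiplicatively closed, or $\preceq$ to be compatible with multiplication by all of $\mcA$, so your version is the more careful of the two.
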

\begin{proof}
    If ${\Det A_i} _\dag \succeq 1$ for $i=1,2,$ then ${\Det A_1A_2} _\dag \succeq {\Det A_1} _\dag {\Det A_2} _\dag  \succeq 1.$
\end{proof}
We get monoid pairs, letting  $\SL(n,(\mcA,\mcA_0))_0 $ to be the singular matrices in~$\SL(n,(\mcA,\mcA_0))$.
\medskip

\subsubsection{Involutions}\label{inv}$ $

\begin{definition}
Suppose $\mcA$ is an nd-semiring.

\begin{enumerate}
    \item The \textbf{opposite nd-semiring}  $\mcA^{\op}$ has the same addition as $\mcA,$ but opposite multiplication $(b_1\cdot b_2) = b_2b_1$.

    \item An \textbf{anti-homomorphism} on $\mcA$ is a homomorphism from $\mcA$ to $\mcA^{\op}$, i.e., satisfying $f(b_1b_2) =  f(b_2)f(b_1),$ $\forall b_i\in \mcA.$
    \item An \textbf{involution}  on an nd-semiring pair $(\mathcal
A, \mcA_0)$  with   surpassing relation~$\preceq$, is a ``paired'' anti-homomorphism of degree 2, i.e.,  satisfying ($\forall  \ b,b_i \in  \mathcal
A$):

 \begin{enumerate}\eroman

\item $(b_1b_2)^* = b_2^*b_1^*,$
 \item $(b^*)^* = b$,
 \item $\mcA_0^*\subseteq \mcA_0,$
 \item
 $(b^\dag)^*= ( b^*)^\dag,$ implying $(b^\circ)^*= ( b^*)^\circ,$
 \item $\tT^* = \tT$,
 \item If $b_1 \preceq b_2$
in $\mathcal A,$ then $b_1^* \preceq b_2^*.$

\end{enumerate}
\end{enumerate}
\end{definition}

\begin{lem}  $(c^\circ)^* =
(c^*)^\circ.$
\end{lem}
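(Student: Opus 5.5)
We need $(c^\circ)^* = (c^*)^\circ$, where $c^\circ = c + c^\dag$ and $c^\dag = \one^\dag c$. The plan is to unfold the definition of $\circ$ and push $*$ through the sum. The one point needing care is that $*$ is additive, since it is a homomorphism into $\mcA^{\op}$, which has the same addition as $\mcA$.

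Concretely, an involution is an anti-homomorphism, so $(b_1+b_2)^* = b_1^*+b_2^*$. Then
$$(c^\circ)^* = (c + c^\dag)^* = c^* + (c^\dag)^*.$$
Axiom (iv) gives $(c^\dag)^* = (c^*)^\dag$. Hence
$$(c^\circ)^* = c^* + (c^*)^\dag = (c^*)^\circ,$$
which is the claim. This is exactly the implication already noted parenthetically in (iv), so the lemma just records it.

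If one prefers not to invoke axiom (iv) directly, derive $(c^\dag)^* = (c^*)^\dag$ from (i) and the centrality of $\tT$. We have $(\one^\dag c)^* = c^*(\one^\dag)^*$, so we need $(\one^\dag)^* = \one^\dag$ (or at least that it serves as a valid choice of $\one^\dag$).

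By (v), $(\one^\dag)^*\in\tT$. By (iii), $(\one + \one^\dag)^* = \one^* + (\one^\dag)^* \in \mcA_0$. Also $\one^* = \one$, since $*$ maps $\one$ to a two-sided identity. The uniqueness clause (ii) of Property~N then forces $(\one^\dag)^*$ to play the role of $\one^\dag$.

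Centrality then gives $c^*\,\one^\dag = \one^\dag c^* = (c^*)^\dag$. This fallback route is the only potential obstacle, namely identifying $(\one^\dag)^*$ with the chosen $\one^\dag$ when $\one^\dag$ is not unique. Axiom (iv) removes the issue, so I would rely on (iv) directly.
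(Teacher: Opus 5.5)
Your main argument is correct and is the paper's proof: write $c^\circ = c + c^\dag$, use that the anti-homomorphism $*$ is additive, and apply axiom (iv) to get $c^* + (c^*)^\dag = (c^*)^\circ$. The fallback derivation is unnecessary, since it depends on reading clause (ii) of Property~N as pinning down $\one^\dag$ uniquely, but relying on (iv) as you propose avoids that issue.
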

\begin{proof} $(c^\circ)^* = (c+c^\dag)^*  = c^* + (c^\dag)^* =
(c^*)^\circ.$
\end{proof}

\begin{example}\label{matrixinv}  $ $ Examples of involutions on the matrix semialgebra $M_n({\mathcal
A})$:
\begin{enumerate}\eroman
   \item The transpose map on $M_n({\mathcal
A})$ is an involution denoted by $A \mapsto A^t.$

\item  When $n = 2m$, there is another involution, called the
\textbf{symplectic} involution $(s)$, given by $\left(\begin{matrix}
                A_{11} & A_{12}^\dag\\
                A_{21} &  A_{22}
               \end{matrix}\right)^s = \left(\begin{matrix}
               A_{22}^t &  A_{12}^t\\
                {A_{21}^\dag}^t &  A_{11}^t
               \end{matrix}\right)$, for $A_{ij}\in M_m( \mathcal
A).$
               \end{enumerate}
\end{example}
%

 \begin{MNote}
      We can define $\preceq$-orthogonal
matrices via the condition $(I,(\zero))\preceq A A^t, A^tA , $ and
thereby define the $\preceq$-orthogonal monoids, special
$\preceq$-orthogonal monoids, and $\preceq$-symplectic
monoids. These concepts would be special cases of $\preceq$-algebraic groups, defined in the natural way as varieties of pairs endowed with a quasi-inverse map $(g g^{-1} \succeq I)$, but have not yet been investigated.
 \end{MNote}

    Right multiplication by an element of a finite monoid can be viewed as a permutation matrix in the usual way. Since a permutation matrix has a single track, its $\dag$-determinant is tangible, in fact $\one$ or $\one^\dag.$ However, to have a meaningful character theory, one needs to take traces.

When $A = (a_{i,j})$, write $\tr(A) $ for $\sum a_{i,i}.$

 \begin{rem}
   $  \tr( AB) = \tr( BA).$ Hence $\{A \in M_n(\mcA,\mcA_0): \tr (A)\succeq \zero\}$ produces a Lie pair \cite{GaR}.
 \end{rem}

\begin{ques}
   How does one work out a character theory of monoid pairs?
\end{ques}

\section{Vector space pairs and linear algebra}

\begin{definition}\label{vs}
   A \textbf{vector space pair} over a semiring pair $(\mcA, \mathcal A _0)$  is a pair $  (\mcV,\mcV_0),$ where $\mathcal V :=\mathcal A ^{(J)}$  and $\mcV_0 =\mcA_0^{(J)}$.    (Usually $J = \{1,\dots, n\}.$) The $\tT$-bimodule operations are defined componentwise, as is the surpassing relation.

\end{definition}
Different notions of dependence are given in
\cite[Chapter ~19]{golan92} and \cite{AGG1}, but as one might expect, we focus on the ones arising from pairs and surpassing relations.

  \begin{lem}\label{dsum}
    If $\mathbf{v_n} = \sum_{i=1}^{n-1}\mathbf{v_i},$ then the matrix $A$ whose rows are $\mathbf{v_1},\dots, \mathbf{v_n}$ is  singular.
\end{lem}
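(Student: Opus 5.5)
Singularity here means ${\Det A}_+ = {\Det A}_-$. The plan is to expand along the last row and then cancel tracks pairwise, in the bijective spirit of Reutenauer--Straubing and Zeilberger recalled in \S\ref{det1a}. We need the ambient $\mcA$ to be a semiring, so that multiplication distributes over the sum $\mathbf{v_n}=\sum_{i<n}\mathbf{v_i}$; this is the standing hypothesis of Definition~\ref{vs}.

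Write $A_i$ for the matrix obtained from $A$ by replacing the last row with $\mathbf{v_i}$, for $1\le i\le n-1$. Each track is linear in each row, taking one entry from every row. Hence distributivity gives
$$ {\Det A}_+ = \sum_{i=1}^{n-1} {\Det {A_i}}_+, \qquad {\Det A}_- = \sum_{i=1}^{n-1} {\Det {A_i}}_- .$$
The same holds for columns, so if the paper's convention treats $\mathbf{v_j}$ as columns, nothing changes.

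It then suffices to show that each $A_i$ has two equal rows, namely rows $i$ and $n$, and that such a matrix satisfies ${\Det {A_i}}_+ = {\Det {A_i}}_-$. For this, let $\tau$ be the transposition $(i\,n)$. The map $\pi\mapsto \tau\pi$ is a bijection from even permutations to odd ones. It sends the track $a_\pi$ of $A_i$ to $a_{\tau\pi}$, which has exactly the same factors, because rows $i$ and $n$ coincide. Commutativity of $\mcA$ is not even needed here, since the factors stay in the same positions in the product: the $k$-th factor just comes from an identical row. Summing over even $\pi$ therefore gives ${\Det {A_i}}_+ = {\Det {A_i}}_-$ as a genuine equality. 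Adding these equalities over $i$ yields ${\Det A}_+ = {\Det A}_-$, so $A$ is singular.

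The main obstacle is the indexing convention. Tracks are written $a_{\pi(1),1}\cdots a_{\pi(n),n}$, so they choose one entry per column, with rows permuted by $\pi$. Row-linearity still holds, since each track uses exactly one entry from row $n$. The swap should be $\pi\mapsto\tau\pi$, acting on row labels, and one checks that it preserves the multiset of factors and their order. Beyond this, the only point needing care is that additivity in the last row requires genuine distributivity; for nd-semirings one would only get $\succeq$, which is not enough for the equality demanded by the definition of singular.
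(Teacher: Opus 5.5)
Your proof is correct and follows essentially the same route as the paper. The paper's proof is only the one-line remark that the $+$ and $-$ parts in \eqref{eq:tropicalDetsignsyma} match. Your two steps (expanding the last row by distributivity into the matrices $A_i$, then pairing even and odd tracks of each $A_i$ via $\pi\mapsto(i\,n)\pi$) are exactly the track cancellation that remark alludes to, written out in full and with the semiring hypothesis made explicit.
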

\begin{proof}
  The $+$ and
$-$ parts in \eqref{eq:tropicalDetsignsyma} match.
\end{proof}

\begin{definition}\label{dep1}
 A set of
  vectors $\{\mathbf v_i \in
\mcV : i \in I\}$ is   $\mcV_0$-\textbf{dependent}, if $\sum _{i\in I'} a_i
\mathbf v_i \in \mathcal V_0$ for some nonempty finite subset $I' \subseteq I$ and $a_i \in \tT$.

\end{definition}

 $\mcV_0$-dependence is studied in terms of matrices, as in classical linear algebra, via the following conditions.

\begin{itemize}\label{Cond}
\item \textbf{Condition \A{1}}:
The submatrix rank (i.e., the maximal size of a nonsingular square submatrix) is less than or equal to the row rank and the
column rank.

  \item \textbf{Condition \A{2}}:  The submatrix rank is greater than or equal to the row rank and the
column rank.

\item   \textbf{Condition \A{3}}: If the row rank  of an $n\times  n$ matrix~$A$ is~$n$, then $A$ is nonsingular.

\item   \textbf{Condition \A{4}}, cf.~\cite[Question~5.13]{BaZ}: The rows of an $m\times n$ matrix are $\mcV_0$-dependent if $m>n.$

\item   \textbf{Condition \A{5}}: For $m>n,$ either the rows of an $m\times n$ matrix $A$ are $\mcV_0$-dependent or    $A$   has a singular $n\times n$ submatrix.

 \end{itemize}

 \begin{rem}\label{imps}$ $
  \begin{enumerate}\eroman
      \item   Each of these conditions  hold for supertropical algebras \cite{IR}.

    \item   Each of these conditions    has a counterexample in \cite{AGR2}.
There are straightforward
counterexamples to Conditions \A{2} and \A{3}.

    \item  Condition~\A{1}  holds in varied situations, including
    \begin{itemize}
        \item matrices   over a metatangible, uniquely negated  pair \cite[\S 7.3.1]{AGR2}.

          \item quotient hyperfields, by the argument in \cite{BaZ} of passing down from the field.

            \item  Various other hypotheses   yielding Condition \A{1} are given in \cite[\S 7.3 and \S 7.5]{AGR2}.
    \end{itemize}

    \item  Condition \A{2} implies \A{3}.

    \item  Condition \A{3} is seen to hold in certain cases, in \cite[Theorems~O,P,Q]{AGR2}.

       \item  Condition~\A{3} implies \A{4}, since adding on   $m-n$ columns of zeroes gives a singular matrix, whose rows must thus be $\mcV_0$-dependent.

          \item     Condition \A{4} holds in many situations, cf.~\cite[\S 7.7]{AGR2},  including all hyperfield examples given in \cite[\S 7.7.1]{AGR2}.
  \end{enumerate}
 \end{rem}

When $\mcA$ has a pre-surpassing  relation, the pre-surpassing relation
 $\preceq$  on vectors in $V$ is defined componentwise, i.e., $(v_j) \preceq (v_j')$ if $v_j \preceq v_j'$ for each $i$.
Although $\mcV_0$-dependence is our predominant tool, the following notion also is useful.
 \begin{definition}
     A vector $\mathbf v$ is $\preceq$-\textbf{spanned} by vectors $\mathbf v_1,
\dots, \mathbf v_n$ if  there exist  $a_i \in \tTz$ for which $\mathbf v  \preceq  \sum_{i=1}^n a_i \mathbf v_i .$
 \end{definition}

 \begin{rem}
     For a $\tT$-reversible pre-surpassing relation  $\preceq$, if $\mathbf v_1,\dots, \mathbf v_n$ are tangible and $\mathbf v_1 $ is $\preceq$-spanned by  $\mathbf v_2,\dots, \mathbf v_n$, then $\mathbf v_2 $ is $\preceq$-spanned by  $\mathbf v_1,\mathbf v_3,\dots, \mathbf v_n$.
 \end{rem}

When $(\mcA, \mathcal A _0)$ is shallow, Condition \A{5} holds by {\cite[Proposition 7.4]{AGR2}}. In fact, by \cite[Theorem~U]{AGR2}, either $A$ has a singular $n\times n$ submatrix or each row is $\preceq_0$-spanned by the others.

One of the main tools for solving linear equations is Cramer's rule, which is related to Condition 1 and studied in depth in \cite{AGR2}.

\section{Varieties and polynomial identities}

It is natural to study varieties of pairs, either from the point of view of congruences or noncommutative $\mcA_0$-loci, leading to the notion of polynomial identity:

\begin{definition} Suppose $f(X_1,\dots,X_n)$, $g(X_1,\dots,X_n)$ are tangible polynomials in noncommuting indeterminates over a semiring pair $(\mcA,\mcA_0)$.
\begin{itemize}
    \item $f(X_1,\dots,X_n)= g(X_1,\dots,X_n)$ is a \textbf{polynomial identity} (PI) of a semiring $\mcA$ if $f(b_1,\dots,b_n)= g(b_1,\dots,b_n)$ for all $b_i\in \mcA;$

    \item $f(X_1,\dots,X_n) $ is an $\mcA_0$-\textbf{PI} of  $(\mcA,\mcA_0)$ if $f(b_1,\dots,b_n)\in \mcA_0$ for all $b_i\in \mcA.$
\end{itemize}
\end{definition}

\begin{rem} For any semiring pair $(\mcA,\mcA_0)$,
    if $f=g$ is a PI of  $\mcA$, then $f+g^\dag$ is an $\mcA_0$-{PI} of $(\mcA,\mcA_0)$.
\end{rem}

By viewing any commutative semiring as a  homomorphic image of a free commutative semiring $\Net[\Lambda]$, which in term is contained in the free commutative ring~$\Z[\Lambda]$,    Akian, Gaubert and Guterman \cite{AGG1}~observed that any polynomial expression holding in  all commutative rings also holds in all commutative semirings. By passing to matrix semirings, and matching components, sorting positive and negative parts, it follows that all PI's of $M_n(\Z)$ yield PI's of $M_n(\mcA)$ for any commutative semiring $\mcA,$ and thus $\mcA_0$-PIs of  $M_n(\mcA,\mcA_0).$  \cite{AGG1} also contains a  ``strong transfer principle,'' building on Reutenauer and  Straubing~\cite{ReS}.   This raises the ``Specht-type'' question,
\begin{ques}
Do all $\mcA_0$-PIs of $M_n(\mcA,\mcA_0)$ follow formally from finitely many $\mcA_0$-PIs?
\end{ques}

  \section{Categories: Morphisms of pairs}\label{morp1}

So far the only morphisms we have discussed are homomorphisms in the sense of universal algebra.
A   subtle  categorical issue   is to determine the most useful  definition of morphism     of pairs for applications.   The theory flows most easily when we use the universal algebra approach with paired homomorphisms, as seen in \cite{Row24a}.
Pairs   are closed under products and sums in universal algebra.

However, there are two  other definitions of morphisms which cast better light on    many non-classical pairs.

\begin{definition}\label{wm0} All maps $f:(\mcA,\mcA_0)\to (\mcA',\mcA'_0)$ are presumed to satisfy $f(\mcA_0)\subseteq \mcA_0'$ and $f(\zero) = \zero'.$ We distinguish according to how $f$ acts  additively.
    \begin{enumerate}
     \item\label{weak-morphism} $f$ is a \textbf{weak morphism of pairs}  if, for all $n\in \Net,$  $\sum_{i=1}^n b_i \in \mcA_0$ implies $\sum _{i=1}^n f(b_i)\in \mcA'_0$, for $b_i\in \mcA$.
     \item
$f$ is a $\preceq$-\textbf{morphism}  with respect to   pre-surpassing relations $\preceq$ of $(\mcA,\mcA_0)$ and $ (\mcA',\mcA'_0)$,  if $f( b ) \preceq   f(b')  $
for  $b \preceq b '$   and
$f(\sum_{i=1}^n b_i) \preceq \sum_{i=1}^n f(b_i)  $  for all $n\in \Net,$ $b_i\in \mcA$.



   \item     A \textbf{weak morphism} (resp.~ $\preceq$-\textbf{morphism}) of  semiring pairs (or, more generally,  nd-semiring pairs) is a weak morphism (resp.~$\preceq$-{morphism}) which also is a multiplicative map.

     \item  $\succeq$-\textbf{morphisms} are defined analogously.
    \end{enumerate}
\end{definition}

\begin{lem}\label{wkm}(As in {\cite[Lemma~2.10]{AGR1}})
    Every  $\preceq$-{morphism} or $\succeq$-{morphism} of a pair with respect to a pre-surpassing relation is a weak morphism.
\end{lem}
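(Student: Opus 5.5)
We must show: if $f$ is a $\preceq$-morphism (or $\succeq$-morphism) with respect to pre-surpassing relations, then $\sum_{i=1}^n b_i\in\mcA_0$ implies $\sum_{i=1}^n f(b_i)\in\mcA'_0$. The standing conventions of Definition~\ref{wm0} give $f(\mcA_0)\subseteq\mcA'_0$ and $f(\zero)=\zero'$. So the plan is to pass from $b:=\sum b_i\in\mcA_0$ to $f(b)\in\mcA'_0$, and then transfer membership in the null part along the surpassing relation between $f(b)$ and $\sum f(b_i)$.

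For a $\succeq$-morphism we have $\sum_i f(b_i)\succeq f(\sum_i b_i)=f(b)$, with $f(b)\in\mcA'_0$. The key step is then to show that the null part is upward closed under $\preceq$: if $c\in\mcA'_0$ and $c\preceq c'$, then $c'\in\mcA'_0$. This does not follow from the bare axiom ``$b\in\mcA_0\Rightarrow\zero\preceq b$.'' For the minimal relation $\preceq_0$ of Example~\ref{sur1}(i) it is immediate, since $c'=c+b'$ with $b'\in\mcA'_0$, and $\mcA'_0$ is an additive submodule. I would therefore run the argument with respect to $\preceq_0$, or more generally assume that $\mcA'_0$ is closed upward under $\preceq$, as in \cite[Lemma~2.10]{AGR1} for hyperpairs, where $\subseteq$ clearly preserves containing $\zero$. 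With this, $\sum f(b_i)\in\mcA'_0$.

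For a $\preceq$-morphism the inequality goes the other way: $f(b)\preceq\sum f(b_i)$ is not given, only $f(\sum b_i)\preceq\sum f(b_i)$ read as stated in Definition~\ref{wm0}(2). In fact that reading is the same inequality, $f(b)\preceq\sum_i f(b_i)$, so the same upward-closure argument applies verbatim. The $\succeq$ case is then handled symmetrically, using that $\succeq$ is also a pre-surpassing relation in the sense of the earlier Definition (iii), provided the null part is downward closed for $\succeq$. That is the same condition as before.

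The main obstacle is exactly this upward closure of $\mcA'_0$ under the given pre-surpassing relation. I would first check it for $\preceq_0$ and for $\subseteq$ on hyperpairs, which covers the cases of \cite{AGR1}. Otherwise I would note it as the implicit hypothesis, namely that $b\preceq b'$ with $b\in\mcA_0$ forces $b'\in\mcA_0$, under which the lemma follows.
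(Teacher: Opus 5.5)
Your treatment of $\preceq$-morphisms is sound, and you are right that an extra hypothesis is unavoidable. The paper gives no argument beyond the citation. The natural chain $\zero=f(\zero)\preceq f(\sum_i b_i)\preceq \sum_i f(b_i)$ only yields $\zero\preceq\sum_i f(b_i)$. With pre-surpassing relations as defined here (only $b\in\mcA_0\Rightarrow\zero\preceq b$), the statement is in fact false. The total relation on the classical pair $(\mathbb{Z},\{0\})$ with $\tT=\{\pm1\}$ is pre-surpassing (cf.\ Example~\ref{sur1}(ii)). Hence $f(b)=b^3$ is a $\preceq$-morphism, yet $1+1+(-2)=0$ while $1+1+(-8)\neq 0$. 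Your upward-closure condition (equivalently, $\zero\preceq c\Rightarrow c\in\mcA'_0$) is exactly what is needed, and it does hold for $\preceq_0$ and for $\subseteq$ on hyperpairs. Note, however, that your first paragraph is mislabeled. The inequality $\sum_i f(b_i)\succeq f(\sum_i b_i)$ is the defining inequality of a $\preceq$-morphism in Definition~\ref{wm0}(2). So you handled the $\preceq$ case twice and the $\succeq$ case not at all.

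The genuine gap is the $\succeq$ case, which is not ``symmetric''. A $\succeq$-morphism gives $\sum_i f(b_i)\preceq f(\sum_i b_i)\in\mcA'_0$. What you need is that $\mcA'_0$ be closed \emph{downward} under $\preceq$: if $c'\preceq c\in\mcA'_0$ then $c'\in\mcA'_0$. Regarding $\succeq$ as a pre-surpassing relation and rerunning your argument leads to this same requirement. It is not the condition you used before, and it fails for $\preceq_0$ (e.g.\ $a\preceq_0\mu(a)\in\mcA_0$ with $a\in\tT$ in the supertropical pair).

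Indeed the $\succeq_0$ half of the lemma is false. Let $\tT=\{\one\}$ and $\mcA=\{\zero,\one,x,e\}$, with every sum of two nonzero elements equal to $e$, and $\mcA_0=\{\zero,e\}$. Property~N holds with $\one^\dag=\one$, and $b\preceq_0 b'$ iff $b=b'$ or $b'=e$. Define $f\colon\mcA\to\mcA$ fixing $\zero,\one,e$ and sending $x\mapsto\zero$. It preserves $\preceq_0$ and maps $\mcA_0$ into $\mcA_0$. It also satisfies $f(\sum_i b_i)\succeq_0\sum_i f(b_i)$: equality holds if at most one $b_i\neq\zero$, and otherwise $f(e)=e$. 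Yet $\one+x=e\in\mcA_0$ while $f(\one)+f(x)=\one\notin\mcA_0$.

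So checking $\preceq_0$ does not cover the $\succeq$ case. You must either impose downward closure of $\mcA'_0$ as a separate hypothesis for $\succeq$-morphisms, or restrict the claim to $\preceq$-morphisms.
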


\begin{rem}
  The $\dag$-determinant on matrices is a $\succeq_0$-morphism, by Lemma~\ref{det1}.

\end{rem}

\begin{example}
    For any pair $(\mcA,\mcA_0)$ and any $a\in \tT,$ the left multiplication map $y\mapsto ay$ is a paired homomorphism. When $(\mcA,\mcA_0)$ is a $\preceq$-distributive nd-semiring pair, $L_b$ is a $\preceq$-homomorphism.
    When $(\mcA,\mcA_0)$ is a semiring pair, the left multiplication map $L_b : y\mapsto by$ is a paired homomorphism for all $b\in \mcA$.
\end{example}

\begin{lem} Take $\tT = \tG$ in \S\ref{st}, with   $\val: \mathbb{K} \to  \tT$  the Puiseux valuation.  Then the composition $-\val: \mathbb{K} \to  \tT $ yields a $\preceq$-morphism $(\mathbb{K},\{0\}) \to  \mathbb (T(\tT;\tG),\tG)$.
\end{lem}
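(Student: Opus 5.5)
We have to verify the two conditions of a $\preceq$-morphism in Definition~\ref{wm0}: monotonicity and subadditivity. We also check the standing requirements $f(\mcA_0)\subseteq\mcA_0'$, $f(\zero)=\zero'$, and multiplicativity. Here $f=-\val$, extended by $f(0)=\zero_{\tG}$, which is the element $-\infty$ of the max-plus structure. On the source side, the classical pair $(\mathbb K,\{0\})$ carries the relation $\preceq_0$, which is just equality by Example~\ref{sur1}(i). On the target we use $\preceq_0$ of the supertropical pair. By Example~\ref{ex11a}(ii), $b_1\preceq_0 b_2$ means $b_1+b_2'=b_2$ for some $b_2'\in\tG$; equivalently, $b_1=b_2$, or $b_2\in\tG$ and $\mu(b_1)\le b_2$.

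First the easy parts. Monotonicity is trivial, since $b\preceq b'$ in $\mathbb K$ means $b=b'$. Next, $f(0)=\zero$, so $f(\mcA_0)=f(\{0\})\subseteq \tG$. Multiplicativity is immediate: $v(ab)=v(a)+v(b)$, and the product in $T(\tT;\tG)$ on tangible elements is the addition in $\tT=\tG$. The zero case holds by absorption.

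The main step is subadditivity, $f(\sum_{i=1}^n b_i)\preceq \sum_{i=1}^n f(b_i)$. Drop the zero summands, since they contribute $\zero$ on both sides. Let $m=\max_i(-v(b_i))$ and let $k$ be the number of indices attaining $m$. By the supertropical addition rule and induction on $n$, the right side $\sum f(b_i)$ equals the tangible element $m$ if $k=1$, and the ghost $\mu(m)\in\tG$ if $k\ge2$. On the left, the nonarchimedean inequality gives $-v(\sum b_i)\le m$.
\begin{itemize}
\item If $k=1$, the well-known fact recalled in \S\ref{val} gives $-v(\sum b_i)=m$. The terms with non-maximal valuation cannot cancel the unique leading term, so both sides equal the tangible element $m$, and $\preceq$ holds by reflexivity.
\item If $k\ge 2$, the right side is the ghost $\mu(m)$ and the left side $c$ satisfies $\mu(c)\le m$ (including $c=\zero$ if the sum vanishes). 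Then $c+\mu(m)=\mu(m)$ by the addition rule: for $\mu(c)<m$ the larger term wins, and for $\mu(c)=m$ the result is $\mu(m)$. This is exactly $c\preceq_0\mu(m)$.
\end{itemize}

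I expect the only real obstacle to be bookkeeping. One must make sure that the supertropical sum of several tangible elements with a repeated maximum is indeed the ghost of the maximum. This needs a short induction using that ghosts absorb smaller elements and that $\mu(m)+m=\mu(m)$. One must also fix the convention identifying $-v$ with the max-plus order on $\tT=\tG$. Once this is in place, Lemma~\ref{wkm} shows that the map is in particular a weak morphism, which serves as a consistency check.
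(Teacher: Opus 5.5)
Your proof is correct and takes the same route as the paper: you compare $-\val$ of the sum with the maximum of the $-\val(b_i)$, getting equality when the maximum is attained once and domination by the ghost $\mu(m)$ under $\preceq_0$ when it is attained at least twice. The paper writes out only the two-summand case and leaves the rest implicit (the $n$-term case follows by induction, since $\preceq$ is transitive and compatible with addition), whereas you treat $n$ summands directly and also check multiplicativity and $f(\zero)=\zero$.
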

\begin{proof}
    $-\val (f+g) = \max \{-\val f ,-\val g\} $, which is the sum in $\mathbb T(\tT;\tG),$ unless $\val f = \val g$,  in which case $-\val (f+g) \le  -\val f \preceq_0 \mu( -\val f),$ which is  the sum of $  -\val f$ and~$ -\val f $ in $\mathbb T(\tT;\tG)$.
\end{proof}

\begin{rem}
    \label{depb} Let $(\mcV,\mcV_0)$ be a vector space pair.
    \begin{enumerate}\eroman
        \item The left multiplication map by     $a\in \tT$ preserves dependence of vectors if and only if it is a weak morphism of $(\mcV,\mcV_0)$.
        \item The left multiplication map by      $a\in \tT$  preserves $\preceq$-dependence of vectors if and only if it is a $\preceq$-morphism of $(\mcV,\mcV_0)$.
    \end{enumerate}
\end{rem}

The terminology for   morphisms in hyperfields vary (and find their origin in~\cite{Dr}). Here we say a map $f: \mcH \to \mcH'$ of hypergroups is a \textbf{weak morphism} if $0 \in \boxplus a_i$ implies $0 \in \boxplus f(a_i)$; $f$ is a \textbf{strong morphism} if $f(\boxplus a_i) \subseteq \boxplus f(a_i).$

 \begin{lem}\label{ext1} There is a functor from  hypergroups to  hyperpairs, sending a map $f: \mcH \to \mcH'$ of hypergroups   to the map   $\hat f: (\nsets(\mcH),\nsets(\mcH_0)) \to  (\nsets(\mcH'),\nsets(\mcH'_0))$ given by $\hat f (S) = \{f(a):  a\in S\}.$

  \begin{enumerate}\eroman
     \item    If $f$ is a weak morphism of hypergroups then  $\hat f $  is a weak morphism of hyperpairs.
      \item    If $f$ is a strong morphism of hypergroups then  $\hat f $  is a $\subseteq$-morphism .
  \end{enumerate}
 \end{lem}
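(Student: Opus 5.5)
The proof will be a direct unwinding of the definitions. Throughout, $\hat f$ acts elementwise, $\hat f(S)=\{f(a): a\in S\}$, and hyperaddition on $\nsets(\mcH)$ is taken elementwise as in Lemma~\ref{hp22}.

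\textbf{Functoriality and basic conditions.} First I will check that $\hat f$ is a functor and satisfies the standing hypotheses on maps.
\begin{itemize}
\item Since the image of a composite is the composite of images, $\widehat{g\circ f}=\hat g\circ \hat f$ and $\widehat{\id}=\id$.
\item Assuming, as for morphisms of hypergroups, that $f(\zero)=\zero'$, we get $\hat f(\{\zero\})=\{\zero'\}$.
\item If $\zero\in S$, then $\zero'=f(\zero)\in \hat f(S)$, so $\hat f(\nsets(\mcH)_0)\subseteq \nsets(\mcH')_0$.
\item Tangible elements, i.e. singletons, go to singletons.
\end{itemize}
This gives the conditions required in Definition~\ref{wm0}.

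\textbf{(ii) Strong morphisms.} Let $f$ be a strong morphism, so $f(\boxplus a_i)\subseteq \boxplus f(a_i)$ for elements $a_i$. For sets $S_1,\dots,S_n$ we have
$$\boxplus_i S_i=\bigcup_{a_i\in S_i}\boxplus_i a_i.$$
Therefore
$$\hat f\Big(\boxplus_i S_i\Big)=\bigcup_{a_i\in S_i} f\Big(\boxplus_i a_i\Big)\subseteq \bigcup_{a_i\in S_i}\boxplus_i f(a_i)=\boxplus_i \hat f(S_i).$$
Monotonicity is immediate: $S\subseteq S'$ implies $\hat f(S)\subseteq \hat f(S')$. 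So $\hat f$ is a $\subseteq$-morphism.

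\textbf{(i) Weak morphisms.} Let $f$ be a weak morphism and suppose $\zero\in \boxplus_i S_i$.
\begin{itemize}
\item By the union description above, $\zero\in \boxplus_i a_i$ for some choice $a_i\in S_i$.
\item Since $f$ is a weak morphism, $\zero'\in \boxplus_i f(a_i)$.
\item Since $f(a_i)\in \hat f(S_i)$, we have $\boxplus_i f(a_i)\subseteq \boxplus_i \hat f(S_i)$, so $\zero'\in\boxplus_i \hat f(S_i)$.
\end{itemize}
Hence $\hat f$ is a weak morphism of hyperpairs. Alternatively, (ii) also follows from Lemma~\ref{wkm}.

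\textbf{Main obstacle.} The only real point is the union description of $\boxplus_i S_i$ by elementwise choices. This needs associativity of $\boxplus$ on $\nsets(\mcH)$, i.e. Lemma~\ref{hp22}(i), to make iterated sums well defined, together with induction on $n$. Everything else is bookkeeping with images of sets.
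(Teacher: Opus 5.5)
Your proof is correct and follows the paper's argument. The paper also uses monotonicity of $\hat f$, the containment $\hat f(S_1\boxplus S_2)\subseteq \hat f(S_1)\boxplus \hat f(S_2)$ obtained by passing to elements $a_i\in S_i$, and the fact that $\zero=f(\zero)\in \hat f(S)$ whenever $\zero\in S$. Your $n$-fold union description and your explicit elementwise argument for (i) fill in details that the paper leaves implicit.

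One small slip is in your closing aside: Lemma~\ref{wkm} does not give (ii). That lemma only says a $\subseteq$-morphism is a weak morphism, so combined with (ii) it recovers (i) only when $f$ is strong. Your direct argument is therefore what actually proves (i) for a merely weak $f$.
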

 \begin{proof}
     Clearly if $S_1 \subseteq S_2$ then $f(S_1)\subseteq f(S_2),$ and \begin{equation}
         \begin{aligned}
             \hat f(S_1\boxplus S_2) & = \{ f(a): a \in S_1 \boxplus S_2\} = \{f (a_1\boxplus a_2): a_i \in S_i\}\\& \subseteq  \{f (a_1)\boxplus f(a_2): a_i \in S_i\} =  \hat f(S_1)\boxplus \hat f(S_2).
         \end{aligned}
     \end{equation}
     If $\zero\in S$ then $\zero = f(\zero) \in f(S).$
 \end{proof}
\begin{rem}
    The \textbf{direct product} $\prod_{i\in I}(\mcA_i,{\mcA_i}_0)$ of pairs $(\mcA_i,{\mcA_i}_0)$ having underlying monoid $\tT_i$, is defined as the pair $(\prod \mcA_i,\prod {\mcA_i}_0)$ having underlying monoid $\prod\tT_i$, under the product action $(a_i)(b_i)= (a_ib_i).$ This construction also is functorial.
\end{rem}

\subsection{Extensions of pairs: tensor extensions}\label{ext}$ $

So far, the underlying monoid $\tT$ has been fixed in our discussion of pairs. But the polynomial and matrix constructions motivate changing the underlying monoid.

\begin{definition}
    An \textbf{extension} of a pair
    $(\mcA,\mcA_0) $ is a $\tT'$-pair   $(\mcA',\mcA_0') $ with $\tT'\supseteq \tT$, such that     there is a paired injection  $(\mcA,\mcA_0)\to  (\mcA',\mcA_0')$.
\end{definition}

The ``paired tropical extension,'' polynomial pair, and function pair  of Example~\ref{ex11a}(v),(vi) are examples.
Here is a general class of extensions.

\begin{definition}

When $\mcA$ is a $\tT$-module, and  $\tT'$ is an abelian monoid containing~$\tT,$ we can define the \textbf{tensor extension} $\tT'$-module $\mcA\otimes _\tT \tT'$ to be   the $\tT'$-module $\mcS(\tT' \times \mcA)/\Cong,$
where $\mcS(\tT' \times \mcA)$ is the free semigroup having base $\tT' \times \mcA$, made into a  $\tT'$-module by the action on the first component, and $\Cong$ is the
congruence
 generated by all
 \begin{equation}
    \label{defcong1a} \bigg((  a x ,  v ), (x ,av
)\bigg),
\end{equation}\begin{equation}\label{defconga}\bigg(\big(x, v+w\big),  \big( x,v  )+(x,w \big)\bigg), \qquad   \forall x\in \tT',\, v ,w \in \mathcal \mcA_i, \,  a \in
\tT, \end{equation}
 cf.~\cite[\S4.3]{Row24a}. When $(\mcA , \mcA _0)$ is a pair, $(\mcA\otimes _\tT \tT', \mcA_0\otimes _\tT \tT' )$ is a pair with underlying monoid $\tT'$, called the $\tT'$-\textbf{tensor pair extension}.
 Homomorphisms, weak morphisms and $\preceq$-morphisms all lift  up tensor pair extensions, by \cite[Theorem~5.15]{Row24a}.

\end{definition}
\begin{example} Tensor pair extensions relate to our previous constructions of~extensions.

\begin{enumerate}\eroman
    \item $(\mcA[\lambda],\mcA_0[\lambda]) \cong (\mcA ,\mcA_0)\otimes\tT'$ where $\tT' = \{a_i\la^i: i \in \Net\}.$

     \item $(\mcA^S,\mcA_0^S) \cong (\mcA ,\mcA_0)\otimes\tT'$ where $\tT'  $ is  the submonoid of   elements of $\tT^S$ having support of order 1.

       \item If $\tT$ is contained in a group $\tT'$ then $ (\mcA ,\mcA_0)\otimes\tT'$ is a pair over the group~$\tT'$. This is an easy instance of localization.
\end{enumerate}

\end{example}

 \begin{ques} What is a good general theory of integral extensions?
 \end{ques}

 \section{Module  pairs}

Motivated by the study of rings,  we would like to use modules (often called ``semimodules'' in the literature)  analogously in the study of semiring pairs. The theory of projective \cite{Ka} and injective modules over semirings is carried out in the usual categorical setting using homomorphisms, also cf.~\cite[Chapter ~17]{golan92}. This has led  to the search for a homology theory of semirings \cite{Tak,KN}.

 \begin{definition} Assume that $\mcA$ is a $\tTz$-semiring, and  that $\mcM$ is a $\tT$-module satisfying $(ay)b = a(yb) = y(ab)$ for all $a\in \tT,$ $b\in \mcA,$ $y\in \mcM.$ Also assume that there is an action $\mcA \times \mcM$, which restricts to the given $\tT$-action.
    \begin{enumerate}\eroman
        \item $\mcM$ is a \textbf{module} over $\mcA$ if  $\sum_{i,j} b_i y_j = (\sum_i b_i)(\sum_j y_j)$,  for all  $b_i\in \mcA,$ $y_j\in \mcM.$

    \item $\mcM$ is a   $\preceq$-\textbf{module}  over $\mcA$ with respect to a surpassing relation $\preceq$ if   $\sum_{i,j} b_i y_j \preceq (\sum_i b_i)(\sum_j y_j)$,   for all  $b_i\in \mcA,$ $y_j\in \mcM.$


    \item  A \textbf{module  pair} over a semiring pair $(\mcA,\mcA_0)$ is an abstract pair $(\mcM,\mcM_0)$ such that   $\mcM \supseteq \mcM_0$ are modules over $\mcA$ and $\mcA_0 \mcM \subseteq \mcM_0.$ $\preceq$-\textbf{module pairs} are defined analogously.
    \end{enumerate}
\end{definition}

\begin{rem}
  Any hyperpair is a $\subseteq$-module pair over itself.
\end{rem}

\medskip

\begin{MNote}
    Representation theory for modules depends on  the choice of definition of morphism in Definition~\ref{wm0}, which   depends in turn on the context.
\end{MNote}

\subsubsection{Tensor products of module pairs}$ $

Next, we take a special case of \cite{Row24a}, which considers left and right $\tT$-modules.

 \begin{definition} \label{tp1}$ $
\begin{enumerate}\eroman
  \item Suppose $\mcA_1,\mcA_2$ are $\tT$-modules. Define the \textbf{tensor product $\tT$-module} $\mathcal A_1  \otimes _{\tT} \mathcal A_2$ to be the semigroup $\mcS(\mcA_1 \times \mcA_2)/\Cong,$
where $\Cong$ is the
congruence
 generated by all
 \begin{equation}
    \label{defcong1a1} \bigg(( a x_1 , x_2 ), (x_1,a
 x_2
)\bigg),
\end{equation}\begin{equation}\label{defconga1}\bigg(\big( v_1+w_1,x_2\big),  \big( v_1, x_{2})+(w_1,  x_{2}\big)\bigg), \end{equation}\begin{equation}\label{defcongb}\bigg(\big( x_{1},   v_2+w_2\big),   (x_{1},v_2 ) +(x_{1},w_2 )  \bigg) ,\end{equation}

  $ \forall x_{i}, v_i,w_i\in \mathcal \mcA_i, $ $   a \in
\tT$.


\item When $(\mcA_i,{\mcA_i}_0)$  are pairs, define
$$(\mcA_1\otimes_{\tT} \mcA_2)_0 := ({\mcA_1}_0\otimes_{\tT} \mcA_2)+ (\mcA_1\otimes_{\tT} {\mcA_2}_0).$$

\end{enumerate}

$((\mcA_1\otimes_{\tT} \mcA_2),(\mcA_1\otimes _{\tT}\mcA_2)_0)$ is a pair,  called the \textbf{tensor pair}.
 \end{definition}


By \cite[Corollaries~[4.13,4.15]{Row24a}, the tensor product satisfies the usual associativity and distributivity over direct sums. 

 Tensor products of module pairs are seen in \cite[Corollaries~4.15, 4.16]{Row24a} to provide a concrete tensor category with respect to homomorphisms, in the sense of \cite{Jag,Po}, and satisfy the adjoint correspondence \cite[Remark~5.17]{Row24a}.    Tensor products of weak morphisms and $\preceq$-morphisms are considerably subtler, treated in \cite[\S 5]{Row24a}.

\medskip  \subsection{ $\preceq$-Projective  and $\preceq$-injective  modules}

\begin{definition}[\cite{JMR1}]\label{pproj}$ $
\begin{enumerate}\eroman
    \item A $\preceq$-morphism $f : (\mcM,\mcM_0) \to  (\mcN,\mcN_0)$  is $\preceq$-\textbf{cofinal} if  for every $y \in N$, there exists $b \in M$ for which
 $y \preceq f (b)$.

       \item  A  $\preceq$-module pair $(\mcP,\mcP_0)$    is $\preceq$-\textbf{projective} if for any $\preceq$-cofinal $\preceq$-morphism $h : (\mcM,\mcM_0) \to  (\mcN,\mcN_0)
,$ every $\preceq$-morphism $f :(\mcP,\mcP_0)\to (\mcM,\mcM_0)$ lifts
to a $\preceq$-morphism $\tilde f :(\mcP,\mcP_0)\to (\mcM,\mcM_0)$, in the sense that $f=h \tilde f.$
\end{enumerate}
\end{definition}

  The $\succeq$-version is analogous.
The $\preceq$-projective theory is developed in \cite{JMR1}, to include a ``dual basis lemma'' and a ``Shnauel's lemma.'' In \cite{JMR2} this is carried forward towards homological theory with mixed success: A partial ``connecting lemma'' is obtained in \cite[Theorem~5.15]{JMR2}. Much work remains to obtain a useable $\preceq$-homology theory.

{Localization} via injective modules is treated in \cite[Chapter~18]{golan92}. One can dualize Definition~\ref{pproj}, to obtain $\preceq$-injective modules and   the $\preceq$-injective hull.

\begin{ques}
   How does one develop  a $\preceq$-quotient theory?
\end{ques}

\end{document}